\newcommand*{\symdiff}{\,\scalebox{0.75}{$\bigtriangleup$}\,}
\newcommand*{\ssymdiff}{\hskip0.5pt\scalebox{0.5}{$\bigtriangleup$}\hskip0.5pt}
\renewcommand{\i}{\mathrm{i}}
\newtheorem{theorem}{Theorem}[section]
\newtheorem*{theorem*}{Theorem}
\newtheorem{corollary}[theorem]{Corollary}
\newtheorem{problem}[theorem]{Problem}
\newtheorem{proposition}[theorem]{Proposition}
\newtheorem{conjecture}[theorem]{Conjecture}
\newtheorem{lemma}[theorem]{Lemma}
\theoremstyle{definition}
\newtheorem{remark}[theorem]{Remark}
\newtheorem{example}[theorem]{Example}
\newcommand{\N}{\mathbb{N}}
\newcommand{\Z}{\mathbb{Z}}
\newcommand{\F}{\mathbb{F}}
\renewcommand{\epsilon}{\varepsilon}
\DeclareMathOperator{\Hom}{Hom}
\DeclareMathOperator{\End}{End}
\DeclareMathOperator{\rad}{rad}
\DeclareMathOperator{\im}{im}
\DeclareMathOperator{\sgn}{sgn}
\newcommand{\Res}{\big\downarrow}
\newcommand{\res}{\!\downarrow}
\renewcommand{\theta}{\vartheta}
\newcounter{thmlistcnt}
\newenvironment{thmlist}%
	{\setcounter{thmlistcnt}{0}%
	\begin{list}{\emph{(\roman{thmlistcnt})}}{%
		\usecounter{thmlistcnt}%
		\setlength{\topsep}{0pt}%
		\setlength{\leftmargin}{6pt}%
		\setlength{\itemsep}{0pt}%
		\setlength{\labelwidth}{17pt}
		\setlength{\itemindent}{30pt}}%
	}%
	{\end{list}}%
\newcounter{thmlistaltcnt}
\newenvironment{thmlistalt}%
	{\setcounter{thmlistaltcnt}{0}%
	\begin{list}{\emph{(\roman{thmlistaltcnt})}}{%
		\usecounter{thmlistaltcnt}%
		\setlength{\topsep}{2pt}%
		\setlength{\leftmargin}{27pt}%
		\setlength{\itemsep}{0pt}%
		\setlength{\labelwidth}{18pt}
		\setlength{\itemindent}{0pt}}%
	}%
	{\end{list}}%
\newcounter{thmlistaltlesstopcnt}
\newenvironment{thmlistaltlesstop}%
	{\setcounter{thmlistaltlesstopcnt}{0}%
	\begin{list}{\emph{(\roman{thmlistaltlesstopcnt})}}{%
		\usecounter{thmlistaltlesstopcnt}%
		\setlength{\topsep}{0pt}%
		\setlength{\leftmargin}{24pt}%
		\setlength{\itemsep}{0pt}%
		\setlength{\labelwidth}{18pt}
		\setlength{\itemindent}{0pt}}%
	}%
	{\end{list}}%
\let\phic\varphi
\renewcommand{\phi}[1]{{\phic^{(#1)}}}
\let\Omegac\Omega
\renewcommand{\Omega}[1]{\Omegac_{#1}}
\newcommand{\Omegan}[2]{\Omegac_{#1}^{[#2]}}
\let\alphac\alpha
\renewcommand{\alpha}[1]{{\alphac^{(#1)}}}
\def\sf{0.9}
\newcommand{\vk}[2]{v^{\scalebox{\sf}{$\scriptstyle{(#1)}$}}_{#2}}
\newcommand{\phik}[2]{\phic^{\scalebox{\sf}{$\scriptstyle{(#1)}$}}_{#2}\hskip-0.5pt} 
\newcommand{\phikn}[3]{\phic^{\scalebox{\sf}{$\scriptstyle{(#1)}[#3]$}}_{#2}} 
\newcommand{\phikstar}[2]{{\phik{#1}{#2}}^\star}
\newcommand{\phikstarkt}{\phic^{\scalebox{\sf}{$\scriptstyle{(t)}^\star$}}_{k+t}}
\newcommand{\phio}[2]{\phik{#1}{#2}} 
\newcommand{\phioo}[2]{\phi{#1}} 
\newcommand{\tauperm}{\sigma}
\newcommand{\sigmapower}{\rho}
\renewcommand{\c}{\ell}
\newcommand{\K}{\mathbb{F}}
\newcommand{\HR}{\bar{H}}
\subjclass[2010]{Primary 20C30. Secondary 18G35, 20C20.}
\begin{document}
\title[Multistep homology of the simplex]{The multistep homology
of the simplex and representations of symmetric groups}
\date{\today}
\author{Mark Wildon}

\maketitle
\thispagestyle{empty}

\begin{abstract}
The symmetric group on a set acts transitively on its subsets of a given size.
We define homomorphisms between the corresponding permutation modules, defined
over a field of characteristic two, which generalize the 
boundary maps from simplicial homology.
The main results determine when these chain complexes are exact and when they are split exact. 
As a corollary we obtain a new explicit construction of the basic spin modules for
the symmetric group. 
\end{abstract}

\section{Introduction}

Fix $n \in \N$ and let $S_n$ denote the symmetric group of degree $n$.
For each~$k \in \Z$, let $\Omega{k}$
denote the  set of all $k$-subsets of $\{1,\ldots, n\}$, permuted by the action of $S_n$.
Let $\F$ be a field and let
$\F\Omega{k}$ be the $\F$-vector space of all formal $\F$-linear combinations of the
elements of $\Omega{k}$. Thus $\F\Omega{k}$ is an $\F S_n$-module of dimension $\binom{n}{k}$
having~$\Omega{k}$ as a permutation basis.
For instance if $n \ge 5$ then $\{1,2,3\} + \{3,4,5\} \in \F\Omega{3}$ is
sent to $\{1,2,3\} + \{1,4,5\}$ by the transposition swapping~$1$ and $3$.

Given $t \in \N_0$ and $k \in \Z$, let 
$\phik{t}{k} : \F\Omega{k} \rightarrow \F\Omega{k-t}$
be the  $\F S_n$-module homomorphism defined on each $Y \in \Omega{k}$ by 
\begin{equation}\label{eq:phi} 
Y \phik{t}{k} = \sum_{X \subseteq Y \atop |X| = |Y| - t\rule{0pt}{5.25pt}} X.
\end{equation}
(Throughout we work with right-modules and write maps on the right.)
Motivated by the connection with simplicial homology discussed below,
we call $\phik{t}{k}$ a \emph{multistep boundary map}.
This article concerns the remarkably intricate behaviour of the multistep boundary
 maps when $\F$ has characteristic~two.

\enlargethispage{4pt}
Given $Z \in \Omega{k}$ and $t \in \N$, we may compute
$Z \phik{t}{k} \phik{t}{k-t}$  by summing over all chains $Z \supseteq Y \supseteq X$
with $Y \in \Omega{k-t}$ and $X \in \Omega{k-2t}$. For each~$X$ there are $\binom{2t}{t}$ choices for $Y$;
since $\binom{2t}{t} \equiv 0$ mod $2$, and
 $\F$ has characteristic two,  $X\phik{t}{k}\phik{t}{k-t} = 0$.
Hence if $a < t$ and $c \in \N_0$ is maximal such that $a + ct \le n$ then
\begin{equation}
\label{eq:phiComplex} \scalebox{0.975}{$\displaystyle
0 \rightarrow \F\Omega{a+ct} \xrightarrow{\phik{t}{a+ct}} \F\Omega{a+(c-1)t} 
\xrightarrow{\phik{t}{a+(c-1)t}} \cdots \xrightarrow{\phik{t}{a+2t}}
\F\Omega{a+t} \xrightarrow{\phik{t}{a+t}} \F\Omega{a} \rightarrow 0$} \end{equation}
is a chain complex of  $\F S_n$-modules, each non-zero except at the beginning and end.
Its \emph{homology} in \emph{degree} $k$ is, by definition, 
the $\F S_n$-module $\ker \phik{t}{k} / \im \phik{t}{k+t}$. 

If $t=1$ then the chain complex~\eqref{eq:phiComplex} is exact in every degree.
 Moreover~\eqref{eq:phiComplex} is \emph{split exact}, in the sense that,
for each $k$, there is an $\F S_n$-submodule $C_k$ of $\F \Omega{k}$ such that \smash{$\F\Omega{k} 
= \ker \phik{1}{k}\oplus C_k$},
if and only if~$n$ is odd. We give short proofs
of these  results 
in \S\ref{sec:background} below.

Our first main theorem gives a complete description of the homology modules when $t=2$.
The following notation is required:
for $k$ such that $2k \le n$,
define $G_{k-1} = \bigl\langle (1, 2) \bigl\rangle \times \cdots \times 
\bigl\langle \bigl( 2(k-1)-1, 2(k-1)\bigr) \bigr\rangle$
and 
\[ v_k = \{2,4,\ldots, 2k\} \sum_{\tauperm \in G_{k-1}} \tauperm. \]
(These elements are illustrated in Example~\ref{ex:ex}.)
Let
$D^{(n-k,k)}$ denote the simple $\F S_n$-module defined, with its usual definition,
in \S\ref{sec:prelim} below.

\begin{theorem}\label{thm:epsilonHomology} Let $\epsilon_k : \F\Omega{k}\rightarrow
\F\Omega{k-2}$ denote the two-step boundary map $\phik{2}{k}\hskip-0.5pt$,  as defined in~\eqref{eq:phi}, and let $H_k = 
\ker \epsilon_k / \im \epsilon_{k+2}$. Then
\[ 
H_k
 \cong \begin{cases} E^{(m+1,m-1)} & \text{if $n = 2m$ is even and $k = m$} \\
D^{(m+1,m)} & \text{if $n = 2m+1$ is odd and $k = m$ or $k = m+1$} \\
0           & \text{otherwise,} \end{cases} \]
where 
$E^{(m+1,m-1)}$ is a non-split extension 
of $D^{(m+1,m-1)}$ by itself. Moreover, if $n = 2m$ or $n=2m+1$ then
$H_m$ is the submodule of $\F\Omega{m} / \im \epsilon_{m+2}$ generated by $v_m  + \im \epsilon_{m+2}$
and, for each $m \in \N$, 
there are isomorphisms $D^{(m+1,m)}\!\res_{S_{2m}}\, \cong E^{(m+1,m-1)}$, $D^{(m+1,m-1)}\!\res_{S_{2m-1}}\,
\cong D^{(m,m-1)}$. 

%
\end{theorem}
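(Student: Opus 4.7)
The plan is to analyse $\epsilon_k = \phik{2}{k}$ directly --- in characteristic $2$, $\phik{1}{k}\phik{1}{k-1} = 2\epsilon_k = 0$, so $\epsilon_k$ does not factor through one-step boundaries --- by combining rank formulas, the explicit cycle $v_m$, and a parity-by-parity identification of the homology. First I would compute $\dim H_k$ for every $k$ using Wilson's $\F_2$-rank formula for the inclusion matrix between $k$-subsets and $(k-2)$-subsets of an $n$-set. These ranks give $H_k = 0$ for $k$ outside the middle, $\dim H_m = \dim D^{(m+1,m)}$ when $n = 2m+1$ (for both $k = m$ and $k = m+1$), and $\dim H_m = 2\dim D^{(m+1,m-1)}$ when $n = 2m$.

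Next I would verify $v_m \in \ker \epsilon_m$ by combinatorial cancellation inside the symmetrisation $\sum_{\tauperm \in G_{m-1}} \tauperm$: each pair of elements removed by $\epsilon_m$ from a set in the expansion of $v_m$ cancels modulo $2$ against its partner under the relevant involution in $G_{m-1}$. That $\bar v_m \neq 0$ in $H_m$ is confirmed by pairing with an $S_n$-invariant linear functional. To identify the cyclic submodule $\bar v_m \F S_n \subseteq H_m$, I would check that $v_m$ satisfies the Garnir relations for the Specht module $S^{(m+1,\lfloor n/2 \rfloor)}$ modulo $\im \epsilon_{m+2}$, producing a surjection $S^{(m+1,\lfloor n/2 \rfloor)} \twoheadrightarrow \bar v_m \F S_n$; the image then contains $D^{(m+1,\lfloor n/2 \rfloor)}$ as a composition factor.

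For $n = 2m+1$ odd, dimension comparison forces $H_m \cong D^{(m+1,m)}$, and $H_{m+1} \cong D^{(m+1,m)}$ follows symmetrically. For $n = 2m$ even, two possibilities remain: the split $D^{(m+1,m-1)} \oplus D^{(m+1,m-1)}$ and the non-split $E^{(m+1,m-1)}$. I would rule out the split case by showing $\soc H_m$ is simple, equal to $\bar v_m \F S_{2m}$, which in particular proves $v_m$ generates $H_m$. The branching isomorphism $D^{(m+1,m)}\res_{S_{2m}} \cong E^{(m+1,m-1)}$ follows by restricting the $n = 2m+1$ complex to $S_{2m}$ (using that each permutation module decomposes according to whether the subset contains $2m+1$) and extracting an $S_{2m}$-equivariant map $D^{(m+1,m)}\res_{S_{2m}} \to H_m$ that is an isomorphism by dimension; the restriction $D^{(m+1,m-1)}\res_{S_{2m-1}} \cong D^{(m,m-1)}$ is analogous.

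The principal obstacle is the non-splitness step, since cyclicity alone does not rule out a direct sum $D \oplus D$ once $\dim D \ge 2$; I would either establish the branching isomorphism first and then appeal to the known self-extension structure of the restricted two-row simple module, or compute the socle of $H_m$ directly by identifying kernels of specific projections within the complex.
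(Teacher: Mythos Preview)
Your strategy is quite different from the paper's. The paper never computes $\dim H_k$ from rank formulas; instead it proves by an inductive suspension argument (Lemmas~4.1--4.3 and Proposition~4.4) that $H_k$ is generated by $v_k+\im\epsilon_{k+2}$ for every $k$ with $2k\le n$, and then observes that the support of $v_k$ has size only $2k-1$, so a further suspension (Lemma~4.3) forces $v_k\in\im\epsilon_{k+2}$ whenever $n\ge 2k+2$. The non-zero middle homology is then identified by induction on $n$, restricting to $S_{n-1}$. Your route via Wilson's rank formula and the known dimensions of the $D^{(n-k,k)}$ is plausible in outline, but note that in the paper $\dim D^{(m+1,m)}=2^m$ is a \emph{corollary} of Theorem~1.1 (Corollary~4.6); if you feed it in as an input you must supply an independent proof, or replace ``dimension comparison'' by a composition-factor argument as the paper does.

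Several individual steps do not work as written. The ``$S_n$-invariant functional'' test for $\bar v_m\ne 0$ fails: the unique such functional on $\F\Omega{m}$ is the sum-of-coefficients map, and $v_m$ has $|G_{m-1}|=2^{m-1}$ summands, which is $0$ mod~$2$ once $m\ge 2$. Your Specht label $S^{(m+1,\lfloor n/2\rfloor)}$ is a partition of $2m+1$, so is meaningless when $n=2m$; and it is $v_m+v_m(2m-1,2m)$, not $v_m$ itself, that corresponds to a polytabloid (for $S^{(m,m)}$, see Proposition~4.7). Most seriously, the socle argument is self-contradictory: if $\soc H_m=\bar v_m\,\F S_{2m}$ is simple, then $\bar v_m$ generates only that proper simple submodule, directly contradicting the theorem's assertion that $\bar v_m$ generates all of $H_m$. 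You presumably want the head rather than the socle.

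Your instinct that cyclicity alone cannot exclude $D\oplus D$ is correct and worth keeping: for any absolutely simple $D$ with $\dim_\F D\ge 2$, the module $D\oplus D$ \emph{is} cyclic, generated by any $(d_1,d_2)$ with $d_1,d_2$ linearly independent. The paper invokes exactly this cyclicity argument at the corresponding point of its odd-to-even inductive step. A clean way to close the gap---one the paper supplies in Proposition~4.8 and Corollary~4.9, and whose proof does not depend on the non-splitness claim---is to exhibit the non-zero nilpotent $\F S_{2m}$-endomorphism of $H_m$ induced by $\gamma_m\gamma_m^\star$, which shows $H_m$ is not semisimple and hence is the non-split extension $E^{(m+1,m-1)}$.
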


The results on the restrictions of $D^{(m+1,m-1)}$ and $D^{(m+1,m)}$
 in Theorem~\ref{thm:epsilonHomology} are 
originally due to Danz and K{\"u}lshammer \cite[Proposition~3.3]{DanzKulshammer}; 
they are included so that
the theorem can be proved by induction as it is stated.
In Corollary~\ref{cor:nsplit} we take $n=2m$ and construct an $\F S_{2m}$-endomorphism $\theta$
of $H_m$ such that $\theta$ is non-zero and $\theta^2 = 0$,
making explicit the structure of the non-split extension
$E^{(m+1,m-1)}$.

In particular, Theorem~\ref{thm:epsilonHomology} implies that
 the chain complex of $\F S_{2m}$-modules
\[ 0 \rightarrow \F\Omega{2m} \xrightarrow{\epsilon_{2m}} \F\Omega{2m-2} \xrightarrow{\epsilon_{2m-2}} \cdots 
\xrightarrow{\epsilon_{4}} \F\Omega{2} \xrightarrow{\epsilon_{2}}
\F\Omega{0} \rightarrow 0 \]
is exact whenever $m$ is odd; if  $m$ is even then it has non-zero homology of $E^{(m+1,m-1)}$ uniquely in
degree $m$. This categorifies the binomial identity
\begin{equation} \label{eq:evenBinom} \sum_{j=0}^m (-1)^j \binom{2m}{2j} = \begin{cases} (-1)^{m/2} 2^{m} & \text{if $m$ is even} \\
0 & \text{if $m$ is odd}. \end{cases} \end{equation}

Our second main theorem
determines the degrees in which the chain complex~\eqref{eq:phiComplex} is exact.
In particular, case (ii) 
determines when one of the maps is surjective or injective.

\begin{theorem}\label{thm:gen}
Let $t \in \N$, let $n \in \N$ and let $0 \le k \le n$.
 Let $2^\tau$ be the least two-power appearing
in the binary form of $t$. The sequence

\vspace*{-14.5pt}
\begin{equation}
\label{eq:seq} 
\F\Omega{k+t} \xrightarrow{\phik{t}{k+t}} \F\Omega{k} \xrightarrow{\phik{t}{k}} \F\Omega{k-t} \end{equation}
is exact if and only if one of 
\begin{thmlist}
\item
$t=1$; 
\item 
$k < 2^\tau$ and $k+t \le n-k$ \emph{or} $n-k < 2^\tau$ and $n-k+t \le k$;
\item 
$t$ is a two-power \emph{and} $n \ge 2k+t$ or $n \le 2k-t$.
\end{thmlist}
\end{theorem}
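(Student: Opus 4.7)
The plan is to decompose the multistep boundary maps according to the binary expansion of $t$ and to exploit a complement symmetry relating $\F\Omega{k}$ and $\F\Omega{n-k}$.

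First I would establish two structural tools. The composition formula asserts that, for $s, t \in \N_0$ with disjoint binary expansions, $\phik{s+t}{k} = \phik{s}{k}\phik{t}{k-s}$: summing over intermediate sets introduces a factor $\binom{s+t}{s}$, which by Lucas' theorem is odd precisely in this case. Taking $s = 2^\tau$ factors every $\phik{t}{k}$ through $\phik{2^\tau}{k}$. The complement symmetry is the $\F S_n$-isomorphism $c : \F\Omega{k} \to \F\Omega{n-k}$ sending $Y$ to $\{1,\ldots,n\}\setminus Y$, which intertwines $\phik{t}{k}$ with the upward union map $X \mapsto \sum_{W \supseteq X,\, |W| = |X|+t} W$. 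This swaps $k$ with $n-k$ and so matches the two halves of conditions (ii) and (iii).

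Next I would prove sufficiency. Case (i) is noted before the statement. For case (ii), the hypothesis $k < 2^\tau \le t$ forces $\phik{t}{k} = 0$, and exactness reduces to surjectivity of $\phik{t}{k+t}$. I would prove this surjectivity by an explicit inclusion--exclusion construction: for each $Y \in \Omega{k}$, the room condition $k+t \le n-k$ provides enough $(k+t)$-supersets of $Y$, and a Lucas-type identity that holds precisely when $k < 2^\tau$ forces the unwanted $k$-subsets to cancel modulo $2$. For case (iii), by complement symmetry I may assume $n \ge 2k + t$ and write $t = 2^a$. The base cases are $a = 0$ (already case (i)) and $a = 1$, where Theorem~\ref{thm:epsilonHomology} gives $H_k = 0$ whenever $k < m$, which is exactly the range $n \ge 2k+2$. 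For $a \ge 2$, I would induct on $n$: restricting the complex~\eqref{eq:phiComplex} along $S_{n-1} \le S_n$, or along $S_{n-t} \times S_t \le S_n$, produces a comparison between the homology for $n$ and that for smaller $n$, which together with the composition formula should close the induction.

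For necessity I would exhibit an explicit cycle not in the image whenever none of (i)--(iii) hold. The orbit sums $v_k$ defined before Theorem~\ref{thm:epsilonHomology}, and their generalisations obtained by replacing the transposition subgroup $G_{k-1}$ by a product of cycles of length $2^\tau$, are the natural candidates: for $t = 2$ their non-triviality is recorded in Theorem~\ref{thm:epsilonHomology}, and for other values of $t$ it should propagate via the composition formula once the correct middle degree is located.

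The main obstacle I expect is the inductive step in case (iii) for $t = 2^a$ with $a \ge 2$. The $t = 1$ case admits a one-line chain-homotopy argument that handles odd $n$ immediately, but no such contracting homotopy is available for larger $t$, so the argument must instead appeal directly to the modular representation theory of the permutation module $\F\Omega{k}$ --- in particular its Specht filtration and James' formula for the composition factors $D^{(n-j,j)}$ --- in a way that keeps careful track of the binary form of $t$ and the fine combinatorics of Lucas' theorem.
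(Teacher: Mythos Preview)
Your structural tools (the composition formula and complement symmetry) match the paper's, and your treatment of case~(ii) as a surjectivity question is on target. But there is a genuine gap in your plan for case~(iii), and it sits exactly where you flag the obstacle.

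You propose, for fixed $t = 2^a$ with $a \ge 2$, to induct on $n$. Restricting to $S_{n-1}$ does let you pass from $n$ to $n-1$, but only while $n-1 \ge 2k+t$; at $n = 2k+t$ the induction bottoms out and must be established directly. You then assert that ``no such contracting homotopy is available for larger $t$'', and propose to fall back on Specht filtrations and James' composition-factor formulae. In fact a contracting homotopy \emph{does} exist, and it is the heart of the paper's argument. The dual map $\phikstar{t}{k} : \F\Omega{k-t} \to \F\Omega{k}$, sending $Y$ to $\sum_{Z \supseteq Y} Z$, satisfies
\[
\phik{t}{k}\,\phikstar{t}{k} \;+\; \phikstarkt\,\phik{t}{k+t} \;=\; \mathrm{id}
\]
precisely when $t = 2^\tau$ and $n \equiv 2k + t \pmod{2^{\tau+1}}$; in particular at $n = 2k+t$. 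The verification is a Lucas-theorem computation on the coefficients $\binom{k-d}{t-d} + \binom{n-k-d}{t-d}$ that appear in the two composites. This gives split exactness at the base, and the induction on $n$ then closes by writing a general element of $\ker\phik{t}{k}$ as $U + u\{n\}$ and applying the Splitting Rule. Your alternative idea of using Theorem~\ref{thm:epsilonHomology} for $t=2$ and bootstrapping to $t = 4, 8, \ldots$ has no mechanism: the addition $2^{a-1} + 2^{a-1}$ is never carry free, so the composition formula does not link different two-powers.

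Your necessity argument is also too optimistic. The paper does not construct explicit non-bounding cycles of $v_k$-type for general $t$. Instead it uses two soft obstructions: when $2k \le n < 2k+t$ the module $\F\Omega{k}$ has a composition factor ($D^{(n-k,k)}$, or $D^{(k+1,k-1)}$ if $n=2k$) absent from both $\F\Omega{k\pm t}$; and when $t$ is not a two-power one picks $2^\beta$ in its binary form, sets $s = t - 2^\beta$, and observes
\[
\ker\phik{t}{k} \;\supsetneq\; \ker\phik{s}{k} \;\supseteq\; \im\phik{s}{k+s} \;\supseteq\; \im\phik{t}{k+t},
\]
the strict inclusion witnessed by $\{1,\ldots,k+2^\beta\}\,\phik{2^\beta}{k+2^\beta}$.
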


We also characterize when~\eqref{eq:phiComplex} is exact in every degree. It seems remarkable that
this is the case if and only if it is split exact in every degree.

\begin{theorem}\label{thm:splitExact}
Let $2^\tau$ be the least two-power appearing in the binary form of $t$.
The chain complex~\eqref{eq:phiComplex} is exact in every degree if and only if one of
\begin{thmlist}
\item[\emph{(a)}] $n = 2a + t$ and $a < 2^\tau$;
\item[\emph{(b)}] $t$ is a two-power and $n \equiv 2a + t$ mod $2t$.
\end{thmlist}
Moreover, if either \emph{(a)} or \emph{(b)} holds then~\eqref{eq:phiComplex}
is split exact in every degree.
\end{theorem}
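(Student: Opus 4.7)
My plan is to derive the main equivalence from Theorem~\ref{thm:gen}, and then to construct explicit $\F S_n$-equivariant splittings via a contracting chain homotopy. First I would observe that \eqref{eq:phiComplex} is exact in every degree if and only if, for each $j \in \{0, 1, \ldots, c\}$, the three-term subsequence
\[ \F\Omega{a+(j+1)t} \xrightarrow{\phik{t}{a+(j+1)t}} \F\Omega{a+jt} \xrightarrow{\phik{t}{a+jt}} \F\Omega{a+(j-1)t} \]
is exact, with the convention that $\F\Omega{k}$ is zero for $k \notin [0, n]$. Thus Theorem~\ref{thm:gen} applies at each $k = a + jt$ and supplies three alternative exactness criteria, labelled (i), (ii), (iii) there; the remainder of the exactness half is combinatorics.

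For the direction \emph{(a) or (b) $\Rightarrow$ exact in every degree}: if (b) holds then $n - 2(a+jt) \equiv t \pmod{2t}$ for every $j$, so $|n - 2(a+jt)| \ge t$ and criterion~(iii) applies at every $k$. If (a) holds then $c = 1$; at $k = a$ the first half of (ii) applies (since $a < 2^\tau$ and $n = 2a + t$), and at $k = a + t$ the second half applies (since $n - (a+t) = a < 2^\tau$). Conversely, suppose \eqref{eq:phiComplex} is exact in every degree. For $t \ge 2$ (the case $t = 1$ being already handled in \S\ref{sec:background}): if $t$ is a two-power then $2^\tau = t$ and each sub-condition of~(ii) implies~(iii), so exactness forces $|n - 2(a+jt)| \ge t$ for every $j$, which rearranges to~(b). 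If $t$ is not a two-power then $2^\tau < t$ and~(iii) never applies; at an intermediate index $j \in \{1, \ldots, c-1\}$ both $k = a + jt$ and $n - k$ exceed~$2^\tau$, so~(ii) fails. This forces $c \le 1$, and a direct inspection of the two endpoint conditions when $c = 1$ yields $n = 2a + t$ and $a < 2^\tau$, i.e.,~(a).

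For split exactness under (a) or (b), I would introduce the coboundary maps $\psi^t_k \colon \F\Omega{k} \to \F\Omega{k+t}$ given by $X \mapsto \sum_{Y \supseteq X,\, |Y| = k+t} Y$ and aim to verify the contracting-homotopy identity $\phik{t}{k+t} \psi^t_k + \psi^t_{k-t} \phik{t}{k} = \id_{\F\Omega{k}}$ at every $k = a + jt$. A direct count shows that, for $X \in \Omega{k}$, the coefficient of an $X' \in \Omega{k}$ with $|X \ssymdiff X'| = 2s$ in this composition is $\binom{n-k-s}{t-s} + \binom{k-s}{t-s}$ in characteristic~two. Under~(b), the congruence $n - 2k \equiv 2^\tau \pmod{2^{\tau+1}}$ forces the bits of $n - k - s$ and $k - s$ to agree below position~$\tau$, and Lucas' theorem then kills every off-diagonal coefficient ($s \ge 1$) while leaving the diagonal coefficient ($s = 0$) equal to~$1$. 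Under~(a), so $c = 1$, the analogous analysis reduces to showing $\binom{a+t-s}{a} \equiv 0 \pmod 2$ for $1 \le s \le a$, which holds because the bits of $a$ cannot form a subset of those of $a - s$ whenever $s \ge 1$. The main obstacle is this Lucas book-keeping for general $c$ under~(b); should the direct bit-level argument prove unwieldy, I would fall back on an induction on $c$, using the splitting at the outer term $\F\Omega{a+ct}$ to truncate \eqref{eq:phiComplex} and invoke the inductive hypothesis on the shortened complex.
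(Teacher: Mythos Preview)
Your proposal is correct and follows essentially the same approach as the paper: the exactness characterization is read off from Theorem~\ref{thm:gen} by case analysis on whether $t$ is a two-power, and split exactness comes from the contracting homotopy built from the dual maps $\psi^t_k = \phikstar{t}{k+t}$, with your coefficient identity $\binom{n-k-s}{t-s}+\binom{k-s}{t-s}\equiv [s=0]\pmod 2$ being precisely Lemmas~\ref{lemma:phiphistar} and~\ref{lemma:binomCond} followed by Proposition~\ref{prop:splitExact}. Two small remarks: with the paper's right-action convention your homotopy relation should read $\phik{t}{k}\,\psi^t_{k-t} + \psi^t_k\,\phik{t}{k+t} = \id_{\F\Omega{k}}$ (as written the compositions do not land in $\F\Omega{k}$); and in case~(a) the separate Lucas analysis is unnecessary, since $c=1$ and an isomorphism of modules is automatically a split complex.
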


We end this introduction with two examples showing some of the rich behaviour of the kernels and
images of the multistep boundary maps. 
For readability we write $\gamma_k$ for $\phik{1}{k}$.


\begin{example}\label{ex:ex}
When $n=6$ the Loewy layers of the modules
in the exact chain complex 
$\F\Omega{6} \xrightarrow{\gamma_6} \F\Omega{5} \xrightarrow{\gamma_5} 
\cdots \xrightarrow{\gamma_2}
\F\Omega{1} \xrightarrow{\gamma_1} \F\Omega{0}$ 
are shown below.

\vskip6pt
\hspace*{-0.52in}\begin{tikzpicture}[x=0.5cm, y=0.5cm]
\node[right] (O) at (0,0) 
{$\F \xrightarrow{\,\gamma_6\,} \begin{matrix} \F \\ D^{(5,1)} \\ \F \end{matrix}
     \xrightarrow{\,\gamma_5\,}
     \; \F\;\;\; \bigoplus \,
       \begin{matrix} D^{(5,1)} \\ \F \\ D^{(4,2)} \\ \F \\ D^{(5,1)} \end{matrix}
      \xrightarrow{\,\gamma_4\,}
       \ \begin{matrix} \F \\ D^{(5,1)} \;\oplus\; D^{(4,2)} \\ \F \;\oplus\; \F \\
                                    D^{(4,2)} \;\oplus\; D^{(5,1)} \\ \F \end{matrix}
      \xrightarrow{\,\gamma_3\,} \;  \F \bigoplus \
       \begin{matrix} D^{(5,1)} \\ \F \\ D^{(4,2)} \\ \F \\ D^{(5,1)}\end{matrix}
     \; \xrightarrow{\,\gamma_2\,}  \; \begin{matrix} \F \\ D^{(5,1)} \\ \F \end{matrix}
      \xrightarrow{\,\gamma_1\,}\; \F$}; 

\draw(2.9,-1.5)--(3.9,-1.5)--(3.9,-0.5)--(2.9,-0.5)--(2.9,-1.5);
\draw(5.9,0.5)--(6.9,0.5)--(6.9,-0.55)--(10.8,-0.55)--(10.8,-2.75)--(8.3,-2.75)--(8.3,-1.25)--(5.9,-1.25)--(5.9,0.5);
\draw[line width = 1pt] (6.7,-0.2)--(9.2,-1.0);
\draw(12.2,1.75)--(14.55,1.75)--(14.55,-1.5)--(15.5,-1.5)--(15.5,-2.75)--(14.0,-2.75)--(14.0,-1.5)--(12.2,-1.5)--(12.2,1.75);
\draw(19.4,0.5)--(22.0,0.5)--(22.0,1.5)--(23.5,1.5)--(23.5,-2.75)--(21.0,-2.75)--(21.0,-0.5)--(19.4,-0.5)--(19.4,0.5);
\draw[line width = 1pt] (20.175,0.4)--(22.1,1);
\draw(25.4,-1.5)--(27.4,-1.5)--(27.4,0.65)--(25.4,0.65)--(25.4,-1.5); 
\draw(29.0,-0.5)--(30.0,-0.5)--(30.0,0.5)--(29.0,0.5)--(29.0,-0.5); 

\end{tikzpicture}

\vskip7.5pt

As predicted by Theorem~\ref{thm:epsilonHomology}, $\ker \epsilon_4 \cong \F$ is a direct
summand of $\F\Omega{4}$ and $\ker \epsilon_2$ is the (unique) co-dimension $1$ direct summand
of $\F\Omega{2}$. Thus
the chain complex $0 \rightarrow \F\Omega{6} \xrightarrow{\epsilon_6}
\F\Omega{4} \xrightarrow{\epsilon_4} \F\Omega{2} \xrightarrow{\epsilon_2} \F\Omega{0}
\rightarrow 0$ is split exact.
Moreover $0 \rightarrow \F\Omega{5} \xrightarrow{\epsilon_5} \F\Omega{3} \xrightarrow{\epsilon_3}
\F\Omega{1} \rightarrow 0$ is exact except in degree~$3$, where it has homology $E^{(4,2)}$.
By Theorem~\ref{thm:epsilonHomology} the homology is generated by
$v_3 + \im \epsilon_5$, where $v_3 = \{2,4,6\} + \{1,4,6\} + \{2,3,6\} + \{1,3,6\}$.

The boxes show the kernels of the maps $\gamma_k$.
For example, by Theorem~\ref{thm:gen}(i), $\ker \gamma_2$ is generated by $\{1,2,3\}\gamma_3 = \{1,2\} + \{2,3\} + \{3,1\}$.
Since $\ker \epsilon_2 = \langle X + Y : X, Y \in \Omega{2} \rangle$, 
the intersection
$\ker \gamma_2 \cap \ker \epsilon_2$ is generated by $\{1,2,3\}\gamma_3 + \{1,2,4\}\gamma_3 = \{1,3\} + \{2,3\} + \{1,4\} + \{2,4\}$; it is isomorphic to the Specht module $S^{(4,2)}$
and has composition factors $D^{(4,2)}$, $\F$, $D^{(5,1)}$.
It follows that
$\ker \gamma_2$ is not contained in either direct summand of $\F\Omega{2}$.
The line on the diagram above indicates a `diagonally embedded' submodule; this submodule is unique
if and only if $|\mathbb{F}_2| = 2$. The dual situation arises for $\ker \gamma_4$ and $\F\Omega{4}$.

It is an amusing exercise to show that the outer automorphism of $S_6$ 
swaps the simple modules $D^{(4,2)}$ and $D^{(5,1)}$ and leaves $\F\Omega{3}$ invariant.
In particular, applying it to the homology module $\ker \epsilon_3 / \im \epsilon_5 \cong E^{(4,2)}$
gives a non-split extension of $D^{(5,1)}$ by itself. 
\end{example}

\begin{remark}
In \S\ref{sec:background}  we show
that \smash{$\ker \phik{1}{k}$} is isomorphic to the Specht module \smash{$S^{(n-k,1^k)}$}, by
an explicit isomorphism defined on a generator for \smash{$\im \phik{t}{k+t}$}.
For small $k$, there are some interesting isomorphisms between
the kernels of the multistep boundary maps and Young modules. For example,
it follows from Proposition~\ref{prop:splitExact} that $\ker \epsilon_2 \cong Y^{(n-2,2)}$
whenever $n \equiv 2$ mod~$4$; Example~\ref{ex:ex} shows the case $n=6$.
In general, however,
$\smash{\ker \phik{t}{k}}$ appears
 to have no more explicit description than that given in the main theorems.
\end{remark}

The second example shows that~\eqref{eq:seq} may be split exact in cases when
the full chain complex~\eqref{eq:phiComplex} containing it fails even to be exact.

\begin{example}\label{ex:splitSurprise}
Take $n=13$. When $t=4$ and $a=0$, the chain complex~\eqref{eq:phiComplex}~is

\vspace*{-9pt}
\[ 0 \rightarrow \F\Omega{12} \xrightarrow{\phik{4}{12}} \F\Omega{8} \xrightarrow{\phik{4}{8}} \F\Omega{4} \xrightarrow{\phik{4}{4}} \F\Omega{0} \rightarrow 0. \]
Since $\binom{13}{4}$ is odd, the trivial module is a direct summand of $\F \Omega{4}$;
since $\ker \phik{4}{4} = \langle X + Y : X,\, Y \in \Omega{4} \rangle$, 
we have $\F\Omega{4} = \ker \phik{4}{4} \oplus \langle
\sum_{X \in \Omega{4}} \hskip-0.5pt X \rangle$. By Theorem~\ref{thm:gen}, \smash{$\ker \phik{4}{4} = \im \phik{4}{8}$}.
Therefore $\F\Omega{8} \rightarrow \F\Omega{4} \twoheadrightarrow \F\Omega{0}$ is split exact.
But, again by Theorem~\ref{thm:gen}, $\F\Omega{12} \hookrightarrow \F\Omega{8} \rightarrow \F\Omega{4}$
is not exact; the proof of Lemma~\ref{lemma:cfobstruction} shows that
the homology module $\ker \phik{4}{8} / \im \phik{4}{12}$ has $D^{(8,5)}$ as a composition factor. Calculation shows that in fact it is isomorphic to $D^{(8,5)}$.
\end{example}


\subsubsection*{Outline}
In \S\ref{sec:background} below we give some further motivation from simplicial
homology. This section also collects several results on hook-Specht modules and 
discusses earlier related work.
 In~\S\ref{sec:prelim} we give the logical
preliminaries for the proofs of the main theorems. In~\S\ref{sec:epsilonHomology} we prove Theorem~\ref{thm:epsilonHomology}
and in \S\ref{sec:gen} we prove Theorem~\ref{thm:gen}.
The zero homology modules for the two-step boundary maps are instances of
both theorems, but the proofs are independent and involve somewhat different ideas.
In~\S\ref{sec:splitExact} we extend the arguments in \S\ref{sec:gen} to
prove Theorem~\ref{thm:splitExact}.
The final section \S\ref{sec:problems} suggests four directions for future work inspired by Theorems~\ref{thm:epsilonHomology} and~\ref{thm:gen}. 
In particular Conjectures~\ref{conj:odd3} and~\ref{conj:odd5} 
give two attractive binomial identities that 
would be categorified by an extension of these results to odd characteristic.

\section{Background}
\label{sec:background}

\subsection*{Exterior powers of the natural permutation module}

Suppose that~$\F$ has prime characteristic $p$ and let $M = \langle e_1, \ldots, e_n \rangle_\K$ be the
natural permutation module for $\K S_n$.
The $\K S_n$-module $\bigwedge^k\! M$ 
has as an $\K$-basis all \emph{$(k-1)$-simplices} $e_{i_1} \wedge \cdots \wedge e_{i_k}$
where $1 \le i_1 < \cdots < i_k \le n$.
For $k \in \N$, the \emph{boundary map} $\delta_k : \bigwedge^k\! M \rightarrow \bigwedge^{k-1}\! M$ is defined by
\[ (e_{i_1} \wedge \cdots \wedge e_{i_k}) \delta = \sum_{\c=1}^k (-1)^{\c-1} e_{i_1} \wedge
\cdots \wedge \widehat{e_{i_\c}} \wedge \cdots \wedge e_{i_k} \]
where $\widehat{e_{i_\c}}$ indicates that this factor is omitted. A short calculation
shows that $\delta_{k+1} \delta_{k} = 0$, and so $\im \delta_{k+1} \subseteq \ker \delta_{k}$,
for all $k$. Thus
\begin{equation}
\label{eq:deltaComplex} 
\bigwedge^n M \xrightarrow{\delta_n} \bigwedge^{n-1} M \xrightarrow{\delta_{n-1}} \cdots \xrightarrow{\ \delta_3\ } \bigwedge^2 M \xrightarrow{\ \delta_2\ }
M \xrightarrow{\ \delta_1\ } \F \end{equation}
is a chain complex. Given $v \in \ker \delta_k$ a variation on the product rule for derivatives implies that
\begin{equation}\label{eq:deltaSuspension}
(e_1 \wedge v) \delta_{k+1} = v - e_1 \wedge (v \delta_k) = v,\end{equation}
and so~\eqref{eq:deltaComplex} is exact.
Correspondingly, as is very well known, the solid $(n-1)$-simplex has 
zero homology in all non-zero dimensions. (Note that $\bigwedge^k\! M$ corresponds to 
$(k-1)$-simplices,
and so the final map \smash{$M \xrightarrow{\delta_1} \F$} is omitted when computing the geometric homology.)
The identity~\eqref{eq:deltaSuspension} is the algebraic statement of the \emph{suspension trick} showing 
that an arbitrary cycle $v \in \im \delta_{k+1}$ is a boundary lying in $\ker \delta_k$:
see Figure~1 below. We adapt this trick in 
Lemma~\ref{lemma:suspension}: this lemma is critical to the proof of
Theorem~\ref{thm:epsilonHomology}, and is also used in the proof of Theorem~\ref{thm:gen}(ii). 

\newcommand{\arr}[3]{\draw[line width=0.25pt,-{Latex[width=4pt]}] (#1.center)--($(#1)!#3!(#2)$);%
                     \draw[line width=0.25pt] ($(#1)!#3!(#2)$)--(#2.center);}

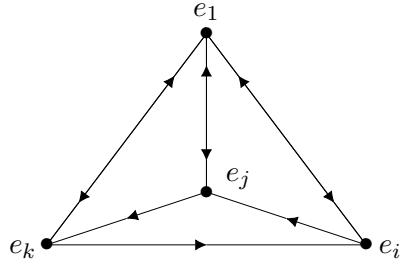
\begin{figure}[h!]
\begin{center}
\begin{tikzpicture}[x = 0.7cm, y = 0.7cm]
\node[label={[xshift=9pt,yshift=-15pt]$e_{i}$}] (A) at (5,0) {$\bullet$};
\node[label={[xshift=12pt,yshift=-9pt]$e_{j}$}] (B) at (2,1) {$\bullet$};
\node[label={[xshift=-9pt,yshift=-15pt]$e_{k}$}]  (C) at (-1,0) {$\bullet$};
\node[label={[xshift=0pt,yshift=-5pt]$e_{1}$}] (T) at (2,4) {$\bullet$};
\arr{A}{B}{0.5}
\arr{B}{C}{0.5}
\arr{A}{T}{0.8}
\arr{T}{A}{0.8}
\arr{T}{B}{0.8}
\arr{B}{T}{0.8}
\arr{C}{A}{0.5}
\arr{C}{T}{0.8}
\arr{T}{C}{0.8}

\end{tikzpicture}
\end{center}
\caption{Suspension trick: the cycle $e_i \wedge e_j + e_j \wedge e_k + e_k \wedge e_i$
is equal to the boundary
$(e_{1} \wedge e_{i} \wedge e_{j}) \delta_3 + (e_{1} \wedge e_j \wedge e_k) \delta_3 + 
(e_1 \wedge e_k \wedge e_i) \delta_3$.}
\end{figure}

Let $U = \langle e_i - e_1 : 1 < i \le n \rangle$. 
Then $U$ is a submodule of $M$ isomorphic to the Specht module $S^{(n-1,1)}$
and $U = \ker \delta_1$.
By~\eqref{eq:deltaSuspension}, it easily follows that
$\bigwedge^k U \subseteq \ker \delta_k$ for each $k$. On the other hand,
since 
\[ (e_{i_1}-e_1) \wedge \cdots \wedge (e_{i_k} - e_1) = 
(e_1 \wedge e_{i_1} \wedge \cdots e_{i_k}) \delta_{k+1} \in \im \delta_{k+1} \]
we have $\bigwedge^k U \supseteq \im \delta_{k+1}$. By exactness we deduce
that $\bigwedge^k U = \ker \delta_k$.
If~$p$ does not divide $n$ then $M = U \oplus \langle e_1 + \cdots + e_n \rangle$ and so
$\bigwedge^k \! M \cong \bigwedge^k U \oplus \bigwedge^{k-1} U \cong \ker \delta_k \oplus
\im \delta_k$ and~\eqref{eq:deltaComplex} is split exact. 

To
motivate a key step in the proofs of Theorems~\ref{thm:gen} and Theorem~\ref{thm:splitExact}, we sketch an alternative proof of this decomposition,
related to the suspension trick. For $k \in \N$, define $f_k : \bigwedge^{k-1}\! M
 \rightarrow \bigwedge^k\! M$ by
 $(e_{i_1} \wedge \cdots \wedge e_{i_{k-1}}) f_k
= e_1 \wedge e_{i_1} \wedge \cdots \wedge e_{i_{k-1}}$. Then 
$\delta_k f_k + f_{k+1}\delta_{k+1} = \mathrm{id}$ for each $k$. Hence the maps $f_k$ define
a chain homotopy between~\eqref{eq:deltaComplex} and the zero complex. As it stands,
$f_k$ is not an $\F S_n$-homomorphism, but replacing $f_k$ with
the symmetrized map $F_k$ defined by $(e_{i_1} \wedge \cdots \wedge e_{i_{k-1}}) F_k
= (e_1 + \cdots + e_n) \wedge (e_{i_1} \wedge \cdots \wedge e_{i_{k-1}})$, we
get 
\begin{equation} \label{eq:Phtpy} \delta_k F_k + F_{k+1}\delta_{k+1} = n\, \mathrm{id}. \end{equation}
Since $F_k F_{k+1} = 0$, a basic argument from homotopy theory, which
 we repeat in the proof of Proposition~\ref{prop:splitExact}, shows that if $p$ does not divide $n$ then
 $\bigwedge^k\! M = \im F_k \oplus \im \delta_{k+1}$ for every $k$ and so~\eqref{eq:deltaComplex}
is split exact.

There is a canonical isomorphism 
\begin{equation}\label{eq:hookIso} \ker \delta_k \cong S^{(n-k,1^k)} \end{equation}
first constructed by
Hamernik \cite{Hamernik} 
in  the case $n=p$ and Peel \cite[Proposition~2]{PeelHooks} 
in general. (For the definition of Specht modules and polytabloids 
see \cite[Ch.~4]{James}.) The isomorphism is defined by sending $(e_{i_1} - e_1) \wedge
\cdots \wedge (e_{i_k} - e_1)$ to the polytabloid $e_t$ where~$t$ is the unique standard
$(n-k,1^k)$-tableau
having first column entries $1, i_1, \ldots, i_r$.
By the Standard Basis Theorem (see \cite[Corollary 8.5]{James}), this defines a linear isomorphism.
It follows easily from the definition
of polytabloids that it commutes with the permutations fixing~$1$; a short calculation with Garnir relations 
(see  \cite[Proposition 2.3]{MullerZimmermann} or \cite[Proposition 5.1(b)]{GiannelliLimWildon}) 
shows that it commutes with~$(1,2)$.

The following result completely determines the structure
of $\bigwedge^k\! M$ when~$p$ is odd. It was proved in the author's D.~Phil thesis \cite[\S 1.3]{WildonThesis}
using the ideas in Hamernik \cite{Hamernik}, Peel \cite{PeelHooks} and James \cite[Theorem 24.1]{James}.

\begin{proposition} Let $p$ be odd. We have $\bigwedge^0\! M \cong \K$ and $\bigwedge^n\! M \cong \mathrm{sgn}$.
\begin{thmlistaltlesstop}
\item If $p$ does not divide $n$ and $k \in \{1,\ldots, n-1\}$ then $S^{(n-k,1^k)}$ is simple and 
$\bigwedge^k\! M \cong S^{(n-k,1^k)}
\oplus S^{(n-k-1,1^{k-1})}$ is semisimple.

\item Suppose $p$ divides $n$. Let $D = U / \langle e_1 + \cdots + e_n \rangle$ and let $D_k$ denote
$\bigwedge^k \! D$. Then 
$D_k$ is simple  and there is a non-split exact sequence
$D_{k-1} \hookrightarrow S^{(n-k,1^k)} \twoheadrightarrow D_{k}$ for each $k \in \{1,\ldots, n-2\}$.
For $k \in \{1,\ldots, n-1\}$,
each $\bigwedge^k\! M$ is indecomposable with Loewy layers
\[ \begin{matrix} D_{k-1}\\ D_{k-2} \oplus D_k \\ D_{k-1} \end{matrix}\,, \]
where $D_{-1}$ and $D_{n-1}$ should be ignored when $k=1$ or \hbox{$k=n-1$}.
\end{thmlistaltlesstop}
\end{proposition}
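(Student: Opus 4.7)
The plan is to leverage two inputs already established in the paper: the Hamernik--Peel isomorphism~\eqref{eq:hookIso} identifying $\bigwedge^k U = \ker \delta_k$ with the hook Specht module $S^{(n-k,1^k)}$, and the canonical short exact sequence $0 \to U \to M \to \F \to 0$ of $\F S_n$-modules. For case~(i), when $p\nmid n$, this SES splits as $M = U \oplus \langle e_1+\cdots+e_n\rangle$, and a direct expansion of the exterior power gives
\[ \bigwedge^k M \;\cong\; \bigwedge^k U \,\oplus\, \bigwedge^{k-1} U \;\cong\; S^{(n-k,1^k)} \oplus S^{(n-k-1,1^{k-1})}. \]
Irreducibility of each hook Specht module when $p\nmid n$ is the classical result of James~\cite[Theorem~24.1]{James}, completing~(i).

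For case~(ii), when $p\mid n$, the element $w = e_1+\cdots+e_n$ lies in $U$, and the $\F S_n$-module SES $0 \to \langle w\rangle \to U \to D \to 0$ produces a Koszul-type complex
\[ \cdots \to \bigwedge^{j-1} U \xrightarrow{\,w\wedge\,} \bigwedge^{j} U \xrightarrow{\,w\wedge\,} \bigwedge^{j+1} U \to \cdots \]
that is exact (checked after extending $w$ to a vector-space basis of $U$) and identifies $D_j \cong \bigwedge^j U \big/ \bigl(w\wedge \bigwedge^{j-1} U\bigr)$. The exactness at once produces the SES $D_{k-1} \hookrightarrow S^{(n-k,1^k)} \twoheadrightarrow D_k$. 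Simplicity of $D_k$ then follows by induction on $k$: the cases $D_0 = \F$ and $D_1 = D^{(n-1,1)}$ are standard, and the inductive step combines this SES with James's count~\cite[Theorem~24.1]{James} of composition factors of hook Specht modules when $p\mid n$ to force $D_k$ to be simple. Non-splitness follows because $S^{(n-k,1^k)}$ has a unique simple quotient, namely $D_k$.

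Finally, applying $\bigwedge^k$ to $0\to U\to M\to \F\to 0$ yields the $\F S_n$-module SES
\[ 0 \to \bigwedge^k U \to \bigwedge^k M \to \bigwedge^{k-1} U \to 0. \]
Combining the two-layer structures $D_k / D_{k-1}$ of $\bigwedge^k U$ and $D_{k-1}/D_{k-2}$ of $\bigwedge^{k-1} U$ established in~(ii) with the self-duality of $\bigwedge^k M$ inherited from the $S_n$-invariant form on $M$, one reads off the Loewy layers: the socle of $\bigwedge^k M$ coincides with $\soc\bigwedge^k U = D_{k-1}$, self-duality forces head $=D_{k-1}$ as well, and the remaining composition factors $D_{k-2}$ and $D_k$ in the middle must decompose as a direct sum $D_{k-2}\oplus D_k$ because they are non-isomorphic self-dual simples sitting inside a self-dual quotient with no other composition factors. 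Indecomposability of $\bigwedge^k M$ then follows from the simple head. The main obstacle I expect is pinning down this middle layer cleanly: one must verify both that no simple submodule of $\bigwedge^{k-1}U$ lifts along the SES (so that $\soc\bigwedge^k M = D_{k-1}$ rather than something larger) and that the middle piece is semisimple rather than a non-split extension, with both facts being consequences of self-duality that deserve careful extraction; the edge cases $k=1$ and $k=n-1$, where $D_{-1}$ or $D_{n-1}$ vanishes, reduce to shortened versions of the same argument.
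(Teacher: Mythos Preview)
Your plan is essentially the one the paper defers to: the paper does not prove this proposition but cites the author's thesis together with Hamernik, Peel, and James~\cite[Theorem~24.1]{James}, and your argument is built from exactly those ingredients (the Hamernik--Peel isomorphism~\eqref{eq:hookIso}, the Koszul filtration by $w\wedge-$, and James's composition-factor count).  The outline for~(i) and the construction of the short exact sequence in~(ii) are correct as written.

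There is one genuine gap.  Your justification of non-splitness, ``$S^{(n-k,1^k)}$ has a unique simple quotient'', is immediate from James's submodule theorem only when the hook $(n-k,1^k)$ is $p$-regular, that is when $k<p$; for $k\ge p$ the partition is $p$-singular and the standard theory gives no simple head.  You therefore need an independent argument for indecomposability of $\bigwedge^k U$ in this range.  One route: establish $k=1$ directly (if $U$ split then $U_{S_n}\cong\F$, whereas the identity $(e_i-e_j)-(e_i-e_j)(i,j)=2(e_i-e_j)$ forces $U_{S_n}=0$ in odd characteristic), then propagate.  A clean way to propagate is to run the Loewy argument and the non-splitness argument simultaneously by induction on~$k$: assuming $\bigwedge^{k-1}U$ and $\bigwedge^{k-2}U$ are uniserial, the proof below gives the Loewy structure of $\bigwedge^{k-1}M$; in particular $\bigwedge^{k-1}M$ has simple head $D_{k-2}$, so the surjection $\delta_k:\bigwedge^k M\twoheadrightarrow\bigwedge^{k-1}U$ cannot factor through $D_{k-1}$, and a short chase using both the submodule $\bigwedge^k U$ and the dual submodule $(\bigwedge^{k-1}U)^\star$ of $\bigwedge^k M$ then forces $\bigwedge^k U$ to be uniserial as well.

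For the Loewy structure itself your instinct is right that self-duality is the key, but you need one more input than you state.  Granting that $\bigwedge^k U$ and $\bigwedge^{k-1}U$ are uniserial, both $\bigwedge^k U$ and the dual submodule $(\bigwedge^{k-1}U)^\star$ sit inside $\bigwedge^k M$ with common socle $D_{k-1}$; their sum has composition factors $D_{k-1},D_{k-2},D_k$ and is contained in $\soc^2\bigwedge^k M$.  If $\soc^2$ were all of $\bigwedge^k M$ then the head would contain $D_{k-2}$ or $D_k$, forcing one of them to split off by self-duality, which in turn (by intersecting with $\bigwedge^k U$ or projecting to $\bigwedge^{k-1}U$) would split one of the uniserial pieces, a contradiction.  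Hence the socle series has length exactly three with the stated layers.  The edge cases $k=1$ and $k=n-1$ are the obvious truncations.
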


A corollary of this proposition, which may easily be proved directly by 
 considering possible images of the generator $e_1 \wedge \cdots \wedge e_k$
of $\bigwedge^k\! M$, is that~if $p$ is odd and
$|k - \ell| \ge 2$ then $\Hom_{\F S_n}(\bigwedge^k\! M, \bigwedge^\ell\! M) = 0$. 
This rules out a generalization to odd characteristic of the main theorems in which 
$\F \Omega{k}$ is replaced with $\bigwedge^k\! M$. 
At the end of \S\ref{sec:problems} we propose an alternative generalization.

\subsection*{Other related  work}
The maps $\phik{t}{k}$ are critical to James' proof \cite{JamesChar2} of the 
decomposition numbers for Specht modules labelled by two-row partitions. 
(In \cite{JamesChar2}, our map \smash{$\phik{t}{k}$} is denoted $\theta^k_{k-t}$.) James' Lemma~2.7
gives an inductive construction of generators for the module 
\smash{$\bigcap_{t=k-r}^k \ker \phik{t}{k}$};
his Lemma~3.6  shows that the intersection is the same when taken only over those $t$ of the form~$2^\tau$.
James' Lemma 3.5 states that
 $\ker \phik{k}{s+t}$ contains $\ker\phik{k}{s}$ if and only if $\binom{s+t}{s}$ is odd;
we adapt his proof 
to prove the related Proposition~\ref{prop:kernelProper} below.
The example following James' Lemma 2.7 describes some of the submodules in our Example~\ref{ex:ex}.
Later in \cite[Chapter 17, 24]{James}, James revisited these ideas. His Theorem~17.13(i) implies that \smash{$\{2,4,\ldots, 2k\} \sum_{\sigma \in G_\ell} \sigma$}
generates the kernel of \smash{$\phik{k-\ell+1}{k}$} \emph{when} this map is restricted
to the submodule of $\F\Omega{k}$
generated by $\{2,4,\ldots, 2k\} \sum_{\sigma \in G_{\ell-1}} \sigma$. 
(The full kernel is in general larger.)
In particular,
taking $\ell = k-1$ shows that $v_k \in \ker \epsilon_k$. Part of our Theorem~\ref{thm:epsilonHomology}
gives the stronger result that $v_k + \im \epsilon_k$ generates the homology module $\ker \epsilon_k/\im\epsilon_{k+2}$; the proof uses somewhat different ideas to James.
Conjecture~\ref{conj:generation} proposes a generalization of this result.

In \cite{HenkeYoung}, Henke determined the multiplicities of two-row 
Young modules in the two-row Young permutation
modules (isomorphic to the $\F \Omega{k}$) working in arbitrary characteristic. 
In \cite{DotyErdmannHenkeYoung}, Doty, Erdmann and Henke used the Schur algebra in characteristic $2$ to give an explicit
construction of the primitive idempotents in $\End_{\F S_n}(\F\Omega{k})$.
When~\eqref{eq:phiComplex} is split exact, each \smash{$\ker \phik{t}{k}$} is a direct sum of Young modules,
and the projection $\F\Omega{k} \rightarrow \ker\phik{t}{k}$ is the sum of the relevant idempotents.
For instance, in Example~\ref{ex:ex}, $\ker \epsilon_4 \cong Y^{(6)}$ and $\ker \epsilon_2 \cong Y^{(4,2)}$.
In general multiple idempotents are required. For example, take $\tau \in \N_0$,
$t = 2^\tau$, $k = 2^{\tau+1}$ and $n = (3 + 4r) 2^\tau$
with $r \in \N$. By Theorem~\ref{thm:splitExact}, $\ker \phik{t}{k}$
is a direct summand of $\F\Omega{k}$; an argument similar to Example~\ref{ex:splitSurprise}
shows that the trivial module is a proper direct summand of $\ker \phik{t}{k}$.

Earlier, in \cite{MurphyDecomposable}, Murphy proved a number of results on the endomorphism ring of 
$\ker \phik{1}{k} 
\cong S^{(n-k,1^k)}$ when $p=2$
and used them to determine when this hook-Specht module is decomposable. 
When~$n$ is odd an alternative proof of her criterion can be given using the results in \cite{HenkeYoung},
starting from the observation that $S^{(n-r,1^r)}$ is a direct summand of $\F\Omega{k}$
containing $S^{(n-r,r)}$, and so
is a direct sum of Young modules including $Y^{(n-r,r)}$.

The results on the restricted modules
 $D^{(m+1,m)}\hskip-2pt\res_{S_{2m}}$ and $D^{(m+1,m-1)}\hskip-2pt\res_{S_{2m-1}}$
in Theorem~\ref{thm:epsilonHomology}
were proved by Danz and K{\"u}lshammer in 
\cite[Proposition~3.3]{DanzKulshammer}; the authors' proof uses Kleshchev's very deep
modular branching rule \cite[Theorem 11.2.10]{KleshchevBook}. 
The explicit construction of $D^{(m+1,m-1)}$ in \cite{DanzKulshammer}, attributed to Uno,
also implies these results. The proof here is self-contained
and inductive. 
The generator
for $D^{(m+1,m)}$ in Theorem~\ref{thm:epsilonHomology}
was first found by Benson (with a different description of the quotient module) in \cite[Lemma~5.4]{BensonSpin}.

Finally we note that there is an extensive theory of resolutions of (dual) Specht modules
by Young permutation modules, beginning with \cite{BoltjeHartmann}; 
the authors' conjectured resolution was proved to be exact in \cite{SantanaYudin} using the Schur algebra. 
Even in the two-row case, the terms in these resolutions are 
sums of multiple Young permutation modules. 
Thus they do not appear to be closely connected to this work.

\section{Preliminary results}\label{sec:prelim}

From now until the final part of \S 7, let $\F$ be a field of characteristic $2$.

\newcommand{\rr}{r}
\subsubsection*{Duality}
Each $\F\Omega{\rr}$ is isomorphic to its dual module $\F\Omega{\rr}^\star$
by a canonical isomorphism sending $X \in \Omega{\rr}$ to the
corresponding element $X^\star$ of the dual basis of $\F \Omega{\rr}^\star$.
Under this identification,
$\phik{t}{\rr} : \F\Omega{\rr} \rightarrow \F\Omega{\rr-t}$ becomes the map 
$\phikstar{t}{\rr} : \F \Omega{\rr-t} \rightarrow \F\Omega{\rr}$ defined~by
\begin{equation}\label{eq:phiStar}
Y \phikstar{t}{\rr} = \sum_{Z \supseteq Y \atop |Z| = |Y| + t\rule{0pt}{5.25pt}} Z \end{equation}

\vspace*{-4pt}
\noindent
for $Y \in \Omega{\rr-t}$. (Note that the domain of $\phikstar{t}{\rr}$ is defined to be
$\F\Omega{\rr-t}$, not $\F\Omega{\rr}$ or $\F\Omega{\rr-t}^\star$.)
This duality explains the symmetry in the inequalities
in~Theorem~\ref{thm:gen}.

\enlargethispage{11pt}

\begin{proposition}{\ } \label{prop:dual}
\begin{itemize}
\item[(i)] For each $r$ there is an isomorphism $\F\Omega{r} \cong \F\Omega{n-r}$.
\item[(ii)] The homology of
\[ \F\Omega{k+t} \xrightarrow{\phik{t}{k+t}}\F\Omega{k} \xrightarrow{\phik{t}{k}}
\F\Omega{k-t} \]
is dual to the homology  of 
\[ \F\Omega{n-k+t}  \xrightarrow{\phik{t}{n-k+t}} \F\Omega{n-k} \xrightarrow{\phik{t}{n-k}}
\F\Omega{n-k-t}. \]
\end{itemize}
\end{proposition}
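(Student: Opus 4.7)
The plan is to handle (i) and (ii) using the same fundamental observation: the set-theoretic complement $X \mapsto \bar X := \{1,\ldots,n\}\setminus X$ intertwines the downwards maps $\phik{t}{r}$ on one side with the upwards maps $\phikstar{t}{r}$ on the other.

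For (i), I would define $c_r : \F\Omega{r} \to \F\Omega{n-r}$ by $X \mapsto \bar X$, extended $\F$-linearly. Since $c_r$ is the linear extension of an $S_n$-equivariant bijection $\Omega{r} \to \Omega{n-r}$, it is an $\F S_n$-isomorphism.

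For (ii), the first step is the following direct check: for $Y \in \Omega{n-r+t}$,
\[
\bigl(Y \phik{t}{n-r+t}\bigr)c_{n-r}
= \sum_{\substack{X \subseteq Y \\ |X| = n-r}} \bar X
= \sum_{\substack{Z \supseteq \bar Y \\ |Z| = r}} Z
= \bar Y\, \phikstar{t}{r},
\]
since $X \subseteq Y$ of size $n-r$ corresponds bijectively to $Z=\bar X \supseteq \bar Y$ of size $r$. Hence $c_{n-r+t}$ and $c_{n-r}$ intertwine $\phik{t}{n-r+t}$ with $\phikstar{t}{r}$.

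Applying this with $r=k$ and $r=k+t$ shows that, after identifying $\F\Omega{n-r}$ with $\F\Omega{r}$ via~$c$, sequence~(B) becomes
\[
\F\Omega{k-t} \xrightarrow{\phikstar{t}{k}} \F\Omega{k} \xrightarrow{\phikstar{t}{k+t}} \F\Omega{k+t}.
\]
By the construction of $\phikstar{t}{r}$ recalled just before the proposition, this is precisely the $\F$-linear dual of sequence~(A) under the canonical self-dualities $\F\Omega{r} \cong \F\Omega{r}^\star$.

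To finish, I would invoke the standard fact that for a two-term complex $A \xrightarrow{\alpha} B \xrightarrow{\beta} C$ of $\F S_n$-modules, the homology of the $\F$-linear dual $C^\star \xrightarrow{\beta^\star} B^\star \xrightarrow{\alpha^\star} A^\star$ at $B^\star$ is canonically isomorphic to the $\F$-linear dual of the homology $\ker\beta/\im\alpha$ at $B$, as $\F S_n$-modules. Combining this with the previous isomorphism of chain complexes yields that the homology of (B) is dual to the homology of (A). There is no real obstacle here; the only thing to be careful about is matching indices correctly across the two complement isomorphisms and verifying that the intertwining identity above uses only the defining formulas \eqref{eq:phi} and \eqref{eq:phiStar}.
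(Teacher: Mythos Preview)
Your proposal is correct and follows essentially the same approach as the paper: both use the complement map $X \mapsto \{1,\ldots,n\}\setminus X$ to identify $\F\Omega{r}$ with $\F\Omega{n-r}$, and both observe that under this identification the second sequence becomes the dual of the first via the self-dualities $\F\Omega{r}\cong\F\Omega{r}^\star$ and $\phik{t}{r}\leftrightarrow\phikstar{t}{r}$. The paper dualizes first and then applies the complement isomorphism, whereas you apply the complement first and then recognize the dual, but this is only a cosmetic difference in the order of the two steps.
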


\begin{proof}
Dualising the first sequence we obtain 
$\F\Omega{k-t} \xrightarrow{\phikstar{t}{k}}\F\Omega{k} \xrightarrow{\phikstarkt}
\F\Omega{k+t}$.
Each $\F\Omega{r}$ is isomorphic to $\F\Omega{n-r}$ by the map 
sending each $Y \in \Omega{r}$ to its complement $\{1,\ldots, n\} \backslash Y \in \Omega{n-r}$.
Applying this isomorphism we obtain the second sequence. In particular, the homology
modules are dual.
\end{proof}

\subsubsection*{Specht modules, Young permutation modules, simple modules}
The Specht module $S^\lambda$ canonically labelled by the partition $\lambda$ of $n$
is defined in \cite[Ch.~4]{James} as a submodule of the Young permutation
module $M^\lambda$. There is a well-known canonical isomorphism
$M^{(n-k,k)} \cong \F\Omega{k}$ defined by sending a tabloid of shape $(n-k,k)$ to the
set of entries in its bottom row. Let
$t$ be the $(n-k,k)$-tableau having $2,4,\ldots, 2k$
in its bottom row. Then the corresponding polytabloid $e_t$ generates $S^{(n-k,k)}$
and
\begin{equation}
\label{eq:twoRowSpecht} e_t \mapsto \{2,4,\ldots, 2k\} \sum_{\sigma \in G_k} \sigma. \end{equation}
The simple modules for $\F S_n$ are defined in \cite[Theorem~11.5]{James} as 
the top composition factors of certain Specht modules.
For $2k < n$, let $D^{(n-k,k)}$ denote the simple $\F S_n$-module
canonically labelled by the two-row partition $(n-k,k)$. 
We allow partitions to have zero parts: thus $D^{(n,0)}$ is the trivial $\F S_n$-module.
By \cite[Theorem 11.5]{James} each simple $\F S_n$-module is self-dual.

\vbox{
\begin{lemma}{\ } \label{lemma:twoRowSimples}
\begin{thmlistaltlesstop}
\item If $2k < n$ then $\F\Omega{k}$ has a composition series with factors $D^{(n-r,r)}$ for $r \le k$
in which $D^{(n-k,k)}$ appears exactly once.
\item If $n = 2m$ then $\F\Omega{m}$ has a composition series with factors $D^{(2m-r,r)}$ for $r < m$.
\item If $n = 2m$ then
$D^{(m+1,m-1)}$ is a composition factor of $\F \Omega{k}$ if and only if $k = m-1$, $k=m$ or $k=m+1$.
\item Let $2k < n$ and let $2r < n-1$. 
If $D^{(n-1-r,r)}$ is a composition factor of $D^{(n-k,k)}\!\res_{S_{n-1}}$ then
$k \ge r$.
\end{thmlistaltlesstop}
\end{lemma}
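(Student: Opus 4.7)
The plan is to deduce all four parts from two standard ingredients: the Specht filtration of $\F\Omega{k} \cong M^{(n-k,k)}$, whose factors are the Specht modules $S^{(n-r,r)}$ for $0 \le r \le k$, each appearing exactly once (a special case of Young's rule); and James' theorem \cite{JamesChar2} on two-row Specht modules in characteristic two, which states that every composition factor of $S^{(n-r,r)}$ has the form $D^{(n-s,s)}$ with $s \le r$, and that $D^{(n-r,r)}$ occurs exactly once as the head of $S^{(n-r,r)}$ whenever $(n-r,r)$ is $2$-regular.

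For (i) I would refine the Specht filtration to a composition series. Since $2k < n$, every $(n-r,r)$ with $r \le k$ is $2$-regular, so all composition factors are $D^{(n-s,s)}$ with $s \le k$. The factor $D^{(n-k,k)}$ cannot be contributed by any $S^{(n-r,r)}$ with $r<k$, so its single appearance is as the head of $S^{(n-k,k)}$. Part (ii) proceeds in the same way when $n=2m$; the only novelty is the final Specht factor $S^{(m,m)}$, labelled by a non-$2$-regular partition, whose composition factors are (again by James) $D^{(2m-s,s)}$ with $s<m$ only, so no $D^{(m,m)}$ can appear. For (iii), parts (i) and (ii) give $D^{(m+1,m-1)}$ as a composition factor of $\F\Omega{k}$ when $k=m-1$ or $k=m$, and the complement isomorphism $\F\Omega{m+1} \cong \F\Omega{m-1}$ from Proposition~\ref{prop:dual}(i) handles $k=m+1$. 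Conversely, if $k \le m-2$ then (i) forces every composition factor $D^{(n-r,r)}$ of $\F\Omega{k}$ to satisfy $r \le k \le m-2$, excluding $(m+1,m-1)$; the case $k \ge m+2$ reduces to this via the complement isomorphism.

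For (iv) I would use that $D^{(n-k,k)}$ is a composition factor of $\F\Omega{k}$ by (i), whence every composition factor of $D^{(n-k,k)}\!\res_{S_{n-1}}$ is also a composition factor of $\F\Omega{k}\!\res_{S_{n-1}}$. Partitioning $k$-subsets of $\{1,\ldots,n\}$ by whether they contain $n$ yields the $\F S_{n-1}$-decomposition $\F\Omega{k}\!\res_{S_{n-1}} \cong \F\Omega{k}' \oplus \F\Omega{k-1}'$, where $\F\Omega{j}'$ denotes the analogous permutation module for $S_{n-1}$ acting on $\{1,\ldots,n-1\}$. Applying (i) (or (ii) in the borderline case $n-1=2k$) to each summand, and noting that the hypothesis $2r<n-1$ merely ensures $D^{(n-1-r,r)}$ is a genuine simple $\F S_{n-1}$-module, every composition factor of the right-hand side has the form $D^{(n-1-s,s)}$ with $s \le k$, yielding $r \le k$.

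The main obstacle is the bookkeeping around $2$-regularity: whenever a Specht module $S^{(n-r,r)}$ enters the argument one must check whether $(n-r,r)$ is $2$-regular before invoking the head statement from James' theorem. This matters in (ii), where $S^{(m,m)}$ is non-$2$-regular, and in (iv) in the borderline restriction case when $n$ is odd and $k=(n-1)/2$, where one of the summands is a permutation module of the type treated in (ii) rather than (i).
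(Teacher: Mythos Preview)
Your proposal is correct and follows essentially the same line as the paper. The paper simply cites Theorem~12.1 in \cite{James} for (i) and (ii) and then argues (iii) and (iv) exactly as you do, via the complement isomorphism of Proposition~\ref{prop:dual}(i) and the restriction decomposition $\F\Omega{k}\!\res_{S_{n-1}} \cong \F\Omegan{k}{n-1} \oplus \F\Omegan{k-1}{n-1}$; your version unpacks the citation into the Specht filtration plus the unitriangularity of the two-row decomposition matrix, and correctly flags the $2$-regularity bookkeeping at $S^{(m,m)}$ and in the borderline restriction case.
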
}

\vspace*{-12pt}
\begin{proof}
Parts (i) and (ii) are special cases of Theorem 12.1 in \cite{James}. 
Using Proposition~\ref{prop:dual}(i) to reduce to the case $2k \le n$, part (iii) also follows
from this theorem.
The hypothesis for (iv) implies that
$D^{(n-1-r,r)}$ appears in 
\[ \F\Omega{k}\Res_{S_{n-1}} \cong \F\Omegan{k}{n-1} \oplus \F\Omegan{k-1}{n-1},\]
where each bracketed $n-1$ indicates that the summand is a module for $\F S_{n-1}$.
By (i) and (ii) we deduce that $k \ge r$.
\end{proof}

The following consequence of Lemma~\ref{lemma:twoRowSimples} is used in both \S\ref{sec:epsilonHomology} and \S\ref{sec:gen}.

\begin{proposition}\label{prop:cfs}
Let $n \in \N$. 
\begin{thmlistaltlesstop}
\item  If $n = 2m$ then $\F\Omega{m}$ has exactly two composition factors  isomorphic to $D^{(m+1,m-1)}$.
\item  If $n = 2m+1$ then 
$\F\Omega{m}$ and $\F\Omega{m+1}$ are isomorphic and
each has a unique composition factor isomorphic to $D^{(m+1,m)}$.
\end{thmlistaltlesstop}
\end{proposition}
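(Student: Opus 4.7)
The plan splits the two parts. For (ii) my approach is essentially bookkeeping: the complement map $Y\mapsto\{1,\ldots,2m+1\}\setminus Y$ is $\F S_{2m+1}$-equivariant and gives $\F\Omega{m}\cong\F\Omega{m+1}$, and since $2m<2m+1=n$, Lemma~\ref{lemma:twoRowSimples}(i) with $k=m$ immediately yields uniqueness of $D^{(m+1,m)}$ as a composition factor. No further work is needed.

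For (i) I plan a Frobenius-reciprocity style counting argument via restriction to $S_{2m-1}$. First I will use the decomposition
\[
\F\Omega{m}\!\res_{S_{2m-1}}\cong\F\Omegan{m}{2m-1}\oplus\F\Omegan{m-1}{2m-1}
\]
together with part (ii) applied with $2m-1$ in place of $n$ to conclude that $D^{(m,m-1)}$ has total multiplicity exactly $2$ in this restriction. Next, Lemma~\ref{lemma:twoRowSimples}(ii) tells me that all composition factors of $\F\Omega{m}$ are of the form $D^{(2m-r,r)}$ with $r<m$, and Lemma~\ref{lemma:twoRowSimples}(iv) (with target label $(m, m-1)=(2m-1-r, r)$, so $r=m-1$) will force that only the $D^{(m+1,m-1)}$ factors of $\F\Omega{m}$ can contribute a $D^{(m,m-1)}$ upon restriction. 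Writing $a$ for the multiplicity of $D^{(m+1,m-1)}$ in $\F\Omega{m}$ and $b$ for the multiplicity of $D^{(m,m-1)}$ in $D^{(m+1,m-1)}\!\res_{S_{2m-1}}$, this comparison yields the identity $ab=2$.

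The main obstacle is then to rule out the parasitic solution $(a,b)=(1,2)$ of this equation. I plan to resolve it by rerunning the same restriction-and-count argument on $\F\Omega{m-1}$ in place of $\F\Omega{m}$: Lemma~\ref{lemma:twoRowSimples}(i) with $k=m-1$ gives that $D^{(m+1,m-1)}$ appears in $\F\Omega{m-1}$ with multiplicity exactly $1$, while the decomposition $\F\Omega{m-1}\!\res_{S_{2m-1}}\cong\F\Omegan{m-1}{2m-1}\oplus\F\Omegan{m-2}{2m-1}$ contributes a single $D^{(m,m-1)}$ from the first summand (by part (ii)) and none from the second (since Lemma~\ref{lemma:twoRowSimples}(i) bounds its composition factor labels $(2m-1-s,s)$ by $s\le m-2$, excluding $s=m-1$). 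Matching totals then gives $1\cdot b=1$, so $b=1$ and hence $a=2$, completing (i). Lemma~\ref{lemma:twoRowSimples}(iv) is the essential ingredient throughout: without its bound on the labels of composition factors in the restriction, the other $D^{(2m-r,r)}$ with $r<m-1$ could \emph{a priori} introduce further copies of $D^{(m,m-1)}$ and the counting identity $ab=2$ would not be sharp.
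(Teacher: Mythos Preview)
Your argument is correct, and it follows a genuinely different route from the paper's. For part~(i) the paper exploits the exactness of the one-step complex
\[
0\rightarrow \F\Omega{n}\xrightarrow{\gamma_n}\cdots\xrightarrow{\gamma_1}\F\Omega{0}\rightarrow 0,
\]
together with Lemma~\ref{lemma:twoRowSimples}(iii), which pins down that $D^{(m+1,m-1)}$ occurs only in $\F\Omega{m-1}$, $\F\Omega{m}$, $\F\Omega{m+1}$. Tracking the multiplicity of this simple through the short exact sequences $0\to\im\gamma_{k+1}\to\F\Omega{k}\to\im\gamma_k\to 0$ forces its multiplicity in $\F\Omega{m}$ to be $1+1=2$. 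Your approach instead restricts to $S_{2m-1}$ and counts copies of $D^{(m,m-1)}$, using Lemma~\ref{lemma:twoRowSimples}(iv) to isolate $D^{(m+1,m-1)}$ as the only possible source; the auxiliary pass through $\F\Omega{m-1}$ to eliminate $(a,b)=(1,2)$ is exactly what is needed, and as a bonus you obtain that $D^{(m,m-1)}$ occurs with multiplicity one in $D^{(m+1,m-1)}\!\res_{S_{2m-1}}$, a fact the paper does not extract here. The paper's route is shorter and more in keeping with the chain-complex theme, while yours avoids invoking exactness of the $\gamma$-complex and stays entirely within the combinatorics of Lemma~\ref{lemma:twoRowSimples}. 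For part~(ii) the two arguments coincide.
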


\begin{proof} Recall that $\gamma_k$ denotes $\phik{1}{k}$\hskip-0.5pt.
We use the  one-step sequence
\[ 0\rightarrow \F\Omega{n} \xrightarrow{\gamma_n} \F\Omega{n-1} \xrightarrow{\gamma_{n-1}} \cdots \xrightarrow{\gamma_2} \F\Omega{1} \xrightarrow{\gamma_1} \F\Omega{0} \rightarrow 0. \]
As seen after~\eqref{eq:deltaComplex}, this sequence is exact.
If $n = 2m$ then, by Proposition~\ref{prop:dual}(i) and
Lemma~\ref{lemma:twoRowSimples}(i), the isomorphic modules $\F\Omega{m-1}$ and $\F\Omega{m+1}$ each have $D^{(m+1,m-1)}$ as a composition
factor. By Lemma~\ref{lemma:twoRowSimples}(iii), $D^{(m+1,m-1)}$ is not a composition factor of $\F\Omega{m-2} \cong \F\Omega{m+2}$. Therefore
$D^{(m+1,m-1)}$ must appear twice in $\F\Omega{m}$. The proof is similar when $n = 2m+1$.
\end{proof}

\subsubsection*{Composing multistep maps}
We need a generalization of the result $\phik{t}{k} \phik{t}{k-t} = 0$ proved in the introduction.
Given $s$, $t \in \N_0$, 
we say that the \emph{addition of $s$ to~$t$ is carry free} if $\binom{s+t}{s}$ is odd.
Abusing notation slightly, we may abbreviate this to `$s + t$ is carry free'. 
As motivation, we recall that if $s = \sum_{i=0}^c s_i 2^i$ and $t = \sum_{i=0}^c t_i 2^i$
where $s_i$, $t_i \in \{0,1\}$ for each $i$,
then $s + t$ is carry free if and only if $s_i + t_i \le 1$ for all~$i$, and so $s$ and $t$
can be added in binary without carries.
(This follows immediately from Lucas' Theorem: see for instance \cite[Lemma 22.4]{James}.)

\begin{lemma}\label{lemma:composition}
If $s$, $t \in \N$ then
\[ \phik{s}{k} \phik{t}{k-s} = \begin{cases} \phik{s+t}{k} & \text{if the addition of
$s$ to $t$ is carry free} \\
0 & \text{otherwise.} \end{cases} \]
\end{lemma}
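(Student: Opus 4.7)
The plan is a direct chain-counting argument on the permutation basis, essentially identical in spirit to the calculation of $\phik{t}{k}\phik{t}{k-t}=0$ given in the introduction.

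Fix $Y \in \Omega{k}$. I will evaluate $Y \phik{s}{k}\phik{t}{k-s}$ by summing over all two-step chains $Y \supseteq W \supseteq X$ with $|W| = k-s$ and $|X| = k-s-t$, and then reorganise the sum by first fixing the final subset $X$. Since $|Y \setminus X| = s+t$ and each intermediate $W$ is obtained by adjoining some $t$-subset of $Y \setminus X$ to $X$, for each such $X$ the number of admissible $W$ is exactly $\binom{s+t}{t}$. Hence
\[
Y \phik{s}{k}\phik{t}{k-s} \;=\; \binom{s+t}{t}\sum_{X \subseteq Y,\; |X| = k-s-t} X
\;=\; \binom{s+t}{t}\, Y\phik{s+t}{k}
\]
in $\F\Omega{k-s-t}$.

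It then remains to read off the binomial coefficient modulo $2$. The paragraph preceding the lemma already records that $\binom{s+t}{s} = \binom{s+t}{t}$ is odd precisely when the addition $s+t$ is carry-free in base $2$ (a consequence of Lucas' theorem, cited there as \cite[Lemma 22.4]{James}). Substituting this into the displayed identity gives $Y\phik{s}{k}\phik{t}{k-s} = Y\phik{s+t}{k}$ in the carry-free case and $0$ otherwise; since this holds for every basis element $Y \in \Omega{k}$, the claimed equality of homomorphisms follows.

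There is no real obstacle here: the only thing to be careful about is the bookkeeping of cardinalities ($|Y \setminus X| = s+t$, intermediate step takes $t$ of these $s+t$ elements), and the invocation of Lucas' theorem in the form already used in the paper. No ordering of steps is delicate because the argument is a single computation on a basis element.
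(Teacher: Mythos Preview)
Your proof is correct and is essentially identical to the paper's own argument: the paper simply remarks that the chain-counting from the introduction gives $\phik{s}{k}\phik{t}{k-s} = \binom{s+t}{s}\phik{s+t}{k}$ and then invokes the definition of carry free. Your write-up spells out this counting explicitly on a basis element, which is exactly what the introduction did in the special case $s=t$.
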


\begin{proof}
The argument in the introduction shows that $\phik{s}{k} \phik{t}{k-s} = \binom{s+t}{s}
\phik{s+t}{k}$. The lemma now follows from the definition of carry free.
\end{proof}

\subsubsection*{Products of sets}
Define the \emph{support} of $v \in \F\Omega{k}$ to be the union of the
$k$-subsets that appear in $v$ with a non-zero coefficient.
The vector space $\bigoplus_{k=0}^n \F\Omega{k}$ becomes a graded algebra with product
defined by bilinear extension of
\[ X \cdot Y = \begin{cases} X \cup Y & \text{if $X \cap Y= \varnothing$} \\
0 & \text{otherwise.} \end{cases} \]
for $X \in \Omega{k}$ and $Y \in \Omega{\ell}$.
We denote this product by concatenation. Except in the warning example
following Lemma~\ref{lemma:split}, we only
take the product of $v \in \F\Omega{k}$ and $w \in \F\Omega{\ell}$ when $v$ and $w$
have disjoint support.

\subsubsection*{The Splitting Rule and the Suspension Lemma}

The product rule for derivatives 
has the following analogue for the multistep boundary maps.

\begin{lemma}[Splitting Rule]\label{lemma:split}
Let $v \in \F\Omega{k}$ and let $w \in \F\Omega{\ell}$. If $v$ and $w$
have disjoint support then
\[ (v w) \phik{t}{k+\ell} = \sum_{s=0}^t (v \phik{s}{k}) (w \phik{t-s}{\ell}). \]
\end{lemma}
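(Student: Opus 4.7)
The plan is to reduce to the case of single basis elements using bilinearity of both sides, and then verify the identity by matching subsets combinatorially. So let $v = Y \in \Omega{k}$ and $w = Z \in \Omega{\ell}$ with $Y \cap Z = \varnothing$; the general statement then follows by extending bilinearly in $v$ and $w$ (the disjoint-support hypothesis passes to each pair of basis elements occurring with nonzero coefficient).

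In this case $vw = Y \cup Z \in \Omega{k+\ell}$, so the left-hand side equals
\[ (Y \cup Z)\phik{t}{k+\ell} = \sum_{\substack{X \subseteq Y \cup Z \\ |X| = k + \ell - t}} X. \]
The key step is the bijection $X \leftrightarrow (A,B)$ where $A = X \cap Y$ and $B = X \cap Z$. Grouping these subsets by $s := k - |A|$, so that $|A| = k-s$ and $|B| = \ell - (t-s)$, partitions the sum according to $s$.

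For each $s$, the sets $A \subseteq Y$ with $|A| = k-s$ and $B \subseteq Z$ with $|B| = \ell - (t-s)$ are automatically disjoint since $Y \cap Z = \varnothing$, so the product $A \cdot B$ never vanishes and simply equals $A \cup B$. Hence
\[ (Y\phik{s}{k})(Z\phik{t-s}{\ell}) = \Bigl(\sum_{\substack{A \subseteq Y \\ |A| = k-s}} A\Bigr)\Bigl(\sum_{\substack{B \subseteq Z \\ |B| = \ell - (t-s)}} B\Bigr) = \sum_{\substack{A \subseteq Y,\ B \subseteq Z \\ |A| = k-s,\ |B| = \ell-(t-s)}} A \cup B, \]
and summing over $s$ recovers the left-hand side via the bijection above.

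There is no real obstacle — the only thing to check is the range of $s$. If $s > k$ or $s < t - \ell$ then one of the two factors $\phik{s}{k}$ or $\phik{t-s}{\ell}$ is zero (or defined to be so), so those terms drop out of the sum from $s=0$ to $s=t$ without affecting the identity. This matches perfectly with the combinatorial constraint $0 \le |A| \le k$ and $0 \le |B| \le \ell$ on the left-hand side, completing the proof.
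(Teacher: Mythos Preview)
Your proof is correct and follows essentially the same approach as the paper: reduce by bilinearity to disjoint basis elements $Y$ and $Z$, then use the bijection sending a $(k+\ell-t)$-subset $X$ of $Y\cup Z$ to the pair $(X\cap Y,\,X\cap Z)$. Your additional remarks on the range of $s$ and the nonvanishing of the products $A\cdot B$ make explicit what the paper leaves implicit.
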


\begin{proof}
By bilinearity of the product $\F\Omega{\ell} \times \F\Omega{m} \rightarrow
\F\Omega{k+\ell}$, it suffices to prove the lemma in the special case
when $v$ is an $k$-subset $X$ and $w$ is a disjoint $\ell$-subset $Y$.
It then holds since every $(k + \ell - t)$-subset $Z$ of $X\cup Y$
splits uniquely as a union $(Z \cap X) \cup (Z \cap Y)$ of
a subset of $X$ and a subset of~$Y$. 
\end{proof}

When $t > 1$ the assumption in Lemma~\ref{lemma:split}
that $v$ and $w$ have disjoint support is essential.
For example $(\{1,2\} \{2\}) \epsilon_2 = 0 \epsilon_2 = 0$, but
$(\{1,2\} \epsilon_2) \{2\} + (\{1,2\}\gamma_1) (\{2\}\gamma_1) + \{1,2\} (\{2\} \epsilon_2)
= \varnothing \{2\} + (\{1\} + \{2\})\varnothing  = \{1\}$.

The following lemma is the analogue of~\eqref{eq:deltaSuspension} in \S\ref{sec:background}.

\begin{lemma}[Suspension Lemma]\label{lemma:suspension}
Let $t \in \N$ and let $0 \le \ell < t$. Let $v \in \F\Omega{k}$.
Suppose that $v \in \ker \phio{s}{k}$ whenever $\ell < s \le t$
and that the support of $v$ is disjoint from $X \in \Omega{\ell + t}$.
If the addition of $\ell$ to $t$ is carry free and the
addition of $\ell$ to $t-s$ is not carry free when $0 < s \le \ell$
then
\[ v = \bigl( v (X \phio{\ell}{\ell + t}) \bigr) \phio{t}{k+t}\hskip0.5pt .\]
\end{lemma}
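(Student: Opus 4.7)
The plan is to compute $\bigl( v (X \phio{\ell}{\ell+t}) \bigr) \phio{t}{k+t}$ directly using the Splitting Rule and then use the composition lemma to kill all but one term.

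First I would justify the product $v \cdot (X \phio{\ell}{\ell+t})$. Since every set appearing in $X\phio{\ell}{\ell+t}$ is a subset of $X$, and $X$ is disjoint from the support of $v$, the hypothesis of Lemma~\ref{lemma:split} is satisfied. Applying the Splitting Rule with $w = X\phio{\ell}{\ell+t} \in \F\Omega{t}$ gives
\[
\bigl(v(X\phio{\ell}{\ell+t})\bigr)\phio{t}{k+t}
= \sum_{s=0}^{t} \bigl(v\phio{s}{k}\bigr)\bigl((X\phio{\ell}{\ell+t})\phio{t-s}{t}\bigr).
\]
The terms with $\ell < s \le t$ vanish immediately from the assumption $v \in \ker\phio{s}{k}$, so only the range $0 \le s \le \ell$ survives.

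Next I would analyze the second factor of each surviving term using the composition result Lemma~\ref{lemma:composition}. This says that $\phio{\ell}{\ell+t}\phio{t-s}{t}$ equals $\phio{\ell+t-s}{\ell+t}$ when the addition of $\ell$ to $t-s$ is carry free and equals $0$ otherwise. For $0 < s \le \ell$ the hypothesis explicitly says the addition of $\ell$ to $t-s$ is not carry free, so the whole term is zero. For $s=0$, the hypothesis that $\ell + t$ is carry free gives $\phio{\ell}{\ell+t}\phio{t}{t} = \phio{\ell+t}{\ell+t}$, so $(X\phio{\ell}{\ell+t})\phio{t}{t} = X \phio{\ell+t}{\ell+t} = \varnothing$ since $|X| = \ell+t$.

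The remaining $s=0$ term is therefore $v \cdot \varnothing$, which is just $v$ by the product convention (the empty set is disjoint from everything). I would conclude with this equality, which is the assertion. There is no real obstacle; the lemma is essentially a telescoping of the Splitting Rule once one observes that the carry-free hypotheses on $\ell$ and $\ell + t - s$ have been engineered precisely to kill every piece of the expansion except the $s=0$ piece, which by design reproduces $v$.
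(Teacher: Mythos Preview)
Your proof is correct and follows essentially the same approach as the paper: expand the right-hand side using the Splitting Rule, kill the terms with $\ell < s \le t$ via the kernel hypothesis, kill the terms with $0 < s \le \ell$ via Lemma~\ref{lemma:composition} and the carry-free hypotheses, and identify the surviving $s=0$ term as $v\cdot\varnothing = v$. Your added remark justifying the disjoint-support hypothesis of the Splitting Rule is a nice touch that the paper leaves implicit.
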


\begin{proof}
By the Splitting Rule the right-hand side is
\begin{equation}
\label{eq:suspensionProof}
\sum_{s=0}^{t} (v \phioo{s}{k}) ( X \phioo{\ell}{\ell+t} \phioo{t-s}{t}). 
\end{equation}
(Here, and in the remainder of the proof, we omit the degrees of the maps to increase readability.)
By hypothesis $v \phi{s} = 0$ if $\ell < s \le t$.
When $0 < s \le \ell$ the addition of $\ell$ to $t -s$
is not carry free, again by hypothesis.
Therefore, by Lemma~\ref{lemma:composition},
we have $X\phi{\ell} \phi{t-s} = 0$ for all such~$s$. 
The only remaining summand in~\eqref{eq:suspensionProof} occurs
when $s=0$, in which case another
application of Lemma~\ref{lemma:composition}
shows that $v (X \phi{\ell}\phi{t}) = v \varnothing
= v$. 
\end{proof}

For example, take $t = 2^\tau$ where $\tau \in \N_0$ and take $k < 2^\tau$. Then $k+ 2^\tau$
is carry free, and if $0 < s \le k$ then $k + (2^\tau -s)$, 
is clearly not carry free, since it has~$2^\tau$ in its binary form. The sets $v = \{n-k+1,\ldots, n\}$ and
$X = \{1,\ldots, k+2^\tau\}$ are disjoint whenever $n-k \ge k+2^\tau$.
Hence the hypotheses of the Suspension Lemma hold provided $n \ge 2k + 2^\tau$ and we get
\[ \{n-k+1,\ldots, n\} = \bigl( \{n-k+1,\ldots, n\} (\{1,\ldots, k+2^\tau\} \phio{k}{k+2^\tau}) \bigr) \phio{2^\tau}{k+2^\tau}.\]
Therefore $\phik{2^\tau}{k+2^\tau} : \F\Omega{k+2^\tau} \rightarrow \F\Omega{k}$ is surjective.
We use a small generalization this argument in the proof of part of Theorem~\ref{thm:gen}(ii).

\section{Two-step homology: proof of Theorem~\ref{thm:epsilonHomology}}\label{sec:epsilonHomology}

Recall that $H_k = \ker \epsilon_{k} / \im \epsilon_{k+2}$.
The outline of the proof is as follows: in Lemmas~\ref{lemma:epsilonGenerators},~\ref{lemma:vkdelta} and~\ref{lemma:epsilonSuspension}
and Proposition~\ref{prop:Hgen} we show that $v_k + \im \epsilon_{k+2}$ generates~$H_k$. 
Using that $v_k$ is supported on a set of size $2k-1$, it follows from the Suspension Lemma
that $H_k = 0$ when $n \ge 2k+2$.
By duality we get the same result when $n \le 2k-2$. 
We then identify the composition factors responsible
for the non-zero homology modules, and  find their structure by induction on $n$.
Thus a large part of the proof
is to show that  $\ker \epsilon_k$ has a generator of `small' support: as motivation 
note that, conversely, if
$\ker \epsilon_k = \im \epsilon_{k+2}$, then
$\ker \epsilon_k$ has a generator supported on $\{1,\ldots, k+2\}$.

Throughout $\gamma_k$ denotes $\phik{1}{k}$ and $\epsilon_k$ denotes~$\phik{2}{k}$\hskip-0.5pt.

\begin{lemma}\label{lemma:epsilonGenerators}
Let $2 \le k \le n-2$. The homology module $H_k$ is generated, as an $\F S_n$-module, by all
$\{n\} v + \{n-1,n\} (v \gamma_{k-1}) + \im \epsilon_{k+2}$ where  $v \in \F\Omega{k-1}$
has support disjoint from $\{n-1,n\}$ and satisfies
$v\epsilon_{k-1} = 0$.
\end{lemma}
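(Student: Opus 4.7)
The plan is to take an arbitrary cycle $w \in \ker \epsilon_k$ and exhibit it, modulo $\im \epsilon_{k+2}$, as a sum of elements of the stated form under the $\F S_n$-action. First I decompose $w$ according to its intersection with $\{n-1,n\}$:
\[ w = w_{00} + \{n\}u_{10} + \{n-1\}u_{01} + \{n-1,n\}u_{11}, \]
where the four pieces $w_{00} \in \F\Omega{k}$, $u_{10}, u_{01} \in \F\Omega{k-1}$ and $u_{11} \in \F\Omega{k-2}$ all have support disjoint from $\{n-1, n\}$.

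The first step is to kill the $w_{00}$-term modulo $\im \epsilon_{k+2}$ using the Splitting Rule. For each $Z \in \Omega{k}$ with $Z \cap \{n-1,n\} = \emptyset$,
\[ (Z \cdot \{n-1,n\})\,\epsilon_{k+2} = Z + (Z\gamma_k)\{n-1\} + (Z\gamma_k)\{n\} + (Z\epsilon_k)\{n-1,n\}, \]
so $w_{00}$ is congruent mod $\im \epsilon_{k+2}$ to a sum of terms whose supporting subsets all meet $\{n-1, n\}$. Absorbing these into the remaining pieces replaces $u_{10}, u_{01}, u_{11}$ by modified $u'_{10}, u'_{01}, u'_{11}$ and gives
\[ w \equiv \{n\}u'_{10} + \{n-1\}u'_{01} + \{n-1,n\}u'_{11} \pmod{\im \epsilon_{k+2}}. \]

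The second step is to apply $\epsilon_k$ to both sides and use the fact that $\im \epsilon_{k+2} \subseteq \ker \epsilon_k$ (Lemma~\ref{lemma:composition}) together with $w \in \ker \epsilon_k$ to deduce that this entire expression lies in $\ker \epsilon_k$. Expanding each term by the Splitting Rule and separating by intersection with $\{n-1, n\}$ yields four equations; the one for subsets disjoint from $\{n-1, n\}$ gives $u'_{11} = (u'_{10} + u'_{01})\gamma_{k-1}$, and the ones for the sectors $\{n\}$ and $\{n-1\}$ give $u'_{10}\epsilon_{k-1} = u'_{01}\epsilon_{k-1} = u'_{11}\gamma_{k-2}$. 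The crucial observation is that $\gamma_{k-1}\gamma_{k-2} = 0$ in characteristic~$2$, by Lemma~\ref{lemma:composition} with $s=t=1$ (since $\binom{2}{1}$ is even). Consequently $u'_{11}\gamma_{k-2} = (u'_{10} + u'_{01})\gamma_{k-1}\gamma_{k-2} = 0$, whence $u'_{10}\epsilon_{k-1} = u'_{01}\epsilon_{k-1} = 0$ individually.

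Finally, substituting $u'_{11} = u'_{10}\gamma_{k-1} + u'_{01}\gamma_{k-1}$ and regrouping shows
\[ w \equiv \bigl( \{n\}u'_{10} + \{n-1,n\}u'_{10}\gamma_{k-1} \bigr) + \bigl( \{n-1\}u'_{01} + \{n-1,n\}u'_{01}\gamma_{k-1} \bigr) \pmod{\im \epsilon_{k+2}}. \]
The first summand is a stated generator with $v = u'_{10}$, and the second is the image under the transposition $(n-1, n) \in S_n$ of a stated generator with $v = u'_{01}$; both values of~$v$ satisfy the vanishing hypothesis by the previous paragraph. The main technical obstacle is the careful bookkeeping required when expanding the four terms under $\epsilon_k$ via the Splitting Rule and matching coefficients by sector; the essential payoff is that the characteristic-$2$ vanishing $\gamma_{k-1}\gamma_{k-2} = 0$ is exactly what forces the individual vanishing conditions needed to split $w$ into the two claimed generators.
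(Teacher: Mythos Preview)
Your proof is correct and follows essentially the same approach as the paper's. Both arguments decompose with respect to $\{n-1,n\}$, use the Splitting Rule to eliminate the part supported away from $\{n-1,n\}$ modulo $\im\epsilon_{k+2}$, expand under $\epsilon_k$ to obtain the system of equations, invoke $\gamma_{k-1}\gamma_{k-2}=0$ to separate the vanishing conditions, and finally split the result into a stated generator plus its $(n-1,n)$-translate.
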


\begin{proof}
Given any $X \in \F\Omega{k}$ with support disjoint from $\{n-1,n\}$,
the Splitting Rule implies that 
\[
X = \bigl( \{n-1,n\} X \bigr) 
\epsilon_{k+2} + \{n-1\} (X\gamma_{k}) + \{n\} (X \gamma_k) + \{n-1,n\} (X \epsilon_k).
\]  
Since the first summand lies in $\im \epsilon_{k+2}$, and $X$ generates $\F\Omega{k}$
as an $\F S_n$-module, it follows
that $\F \Omega{k} / \hskip-0.75pt\im \epsilon_{k+2}$ is generated by all
$\{n-1\} u + \{n\} v + \{n-1,n\} w + \im \epsilon_{k+2}$ where
$u \in \F\Omega{k-1}$, $v\in \F\Omega{k-1}$ and $w \in \F\Omega{k-2}$ 
have support disjoint from $\{n-1,n\}$. Now, omitting indices on the maps for readability, we have
\[ \begin{split}&(\{n-1\} u +   \{n\} v + \{n-1,n\} w ) \epsilon \\
&\quad = (u \gamma + v \gamma + w) + \{n-1\} (u \epsilon + w \gamma) +
\{ n \} (v \epsilon + w \gamma) + \{n-1,n\} (w \epsilon). \end{split} \]
The right-hand side is zero if and only if 
$u\gamma + v\gamma = w$, $u\epsilon = v\epsilon = w\gamma$ and $w\epsilon = 0$.
The first equation implies that $w \in \im \gamma$, and so $w\gamma = 0$; hence the three equations
are equivalent to $u\gamma + v\gamma = w$ and $u\epsilon = v\epsilon = 0$. Thus 
$H_k$ is generated by all
\[ \{n-1\} u + \{n \} v + \{n-1,n\} (u \gamma + v\gamma) + \im \epsilon_k \]
such that $u\epsilon = v\epsilon = 0$. Applying the transposition $(n-1,n)$ to $\{n \}v + \{n-1,n\}
v\gamma$, we see that $H_k$ is generated by elements of the required form.
\end{proof}

\newcommand{\rk}{k}
\begin{lemma}\label{lemma:vkdelta} 
If $2 \rk \le n$ then
$v_\rk \gamma_k = \{2,4,\ldots, 2(\rk-1)\} \sum_{\tauperm \in G_{\rk-1}} \tauperm$.
\end{lemma}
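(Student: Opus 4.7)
The plan is to use the Splitting Rule (Lemma~\ref{lemma:split}) to reduce the computation to a single cancellation argument. First I would write
\[ v_k = u_{k-1} \{2k\}, \qquad \text{where}\qquad u_{k-1} = \{2,4,\ldots, 2(k-1)\} \sum_{\tauperm \in G_{k-1}} \tauperm \]
is exactly the right-hand side of the desired identity. This factorization is valid because $G_{k-1}$ acts only on $\{1,\ldots, 2(k-1)\}$ and fixes $2k$, so the singleton $\{2k\}$ and $u_{k-1}$ have disjoint support, and their product (as defined in the preliminary section) equals $v_k$.

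Next I would apply the Splitting Rule to $v_k\gamma_k = (u_{k-1}\{2k\})\phik{1}{k}$. Only two terms in the sum are non-zero, since $\{2k\}\phik{s}{1}$ vanishes for $s \ge 2$, giving
\[ v_k \gamma_k \;=\; u_{k-1}\bigl(\{2k\}\gamma_1\bigr) + (u_{k-1} \gamma_{k-1}) \{2k\} \;=\; u_{k-1} + (u_{k-1}\gamma_{k-1})\{2k\}. \]
It therefore suffices to show that $u_{k-1}\gamma_{k-1} = 0$.

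The key step is then the direct combinatorial verification that $u_{k-1} \in \ker \gamma_{k-1}$. Expanding $u_{k-1}$ as the sum over all tuples $(a_1,\ldots, a_{k-1})$ with $a_i \in \{2i-1, 2i\}$ of the sets $\{a_1,\ldots, a_{k-1}\}$, the image under $\gamma_{k-1}$ is a double sum over such tuples and indices $1 \le i \le k-1$ of the $(k-2)$-sets $\{a_1,\ldots, \widehat{a_i}, \ldots, a_{k-1}\}$. For each fixed $i$ and each fixed choice of $a_j$ ($j \ne i$), the two values $a_i = 2i-1$ and $a_i = 2i$ contribute the \emph{same} set, because $a_i$ has been deleted; in characteristic two these contributions cancel in pairs. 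Hence $u_{k-1}\gamma_{k-1} = 0$ and the lemma follows.

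I do not anticipate any real obstacle here: the only point requiring attention is the disjointness of supports needed to apply the Splitting Rule, which is immediate from the definition of $G_{k-1}$. A purely direct expansion of $v_k\gamma_k$ would work equally well, but using Lemma~\ref{lemma:split} isolates the cancellation cleanly and foreshadows the kind of argument used repeatedly in the sequel.
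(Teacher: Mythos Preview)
Your proof is correct and the core cancellation---pairing $\sigma$ with $\sigma(2j-1,2j)$ when the $j$-th entry is deleted---is exactly the argument the paper uses. The only difference is cosmetic: the paper expands $v_k\gamma_k$ directly and splits off the summand where $2k$ is removed, whereas you first factor $v_k = u_{k-1}\{2k\}$ and invoke the Splitting Rule to isolate the same term; the remaining cancellation is identical.
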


\begin{proof}
Let $w_\rk$ denote the right-hand side. We have
\begin{align*}
v_\rk \gamma_\rk &= \sum_{\tauperm \in G_{\rk-1}} \{2, 4, \ldots, 2(\rk-1) , 2\rk \} \tauperm \gamma_\rk \\
&= \sum_{\tauperm \in G_{\rk-1}} \sum_{j=1}^{\rk-1} \{2 , 4, \ldots, 2(\rk-1) , 2\rk \} \tauperm \,\backslash\, \{(2j) \tauperm\}
+ w_\rk .
\end{align*}
For each fixed $j$, the summands for $\tauperm$ and $\tauperm (2j-1,2j)$ are equal, and so cancel.
Therefore $v_\rk \gamma = w_\rk$, as required. 
\end{proof}

\begin{lemma}\label{lemma:epsilonSuspension}
If $v \in \ker \epsilon_k$ has support of size at most $n-3$ then 
$v \in \im \epsilon_{k+2}$.
\end{lemma}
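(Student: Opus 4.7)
The plan is to recognize this as a direct application of the Suspension Lemma (Lemma 3.5 in the excerpt), with the parameter choice $t = 2$ and $\ell = 1$.

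First I would verify the arithmetic hypotheses of the Suspension Lemma for this choice. The addition $\ell + t = 1 + 2 = 3$ is carry-free, since $\binom{3}{1} = 3$ is odd; equivalently, in binary $1 = (01)_2$ and $2 = (10)_2$ share no bits. For the other condition, the only $s$ with $0 < s \le \ell$ is $s = 1$, giving $\ell + (t-s) = 1 + 1 = 2$, and $\binom{2}{1} = 2$ is even, so this addition fails to be carry-free, as required.

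Next I would check the kernel hypothesis. The Suspension Lemma demands $v \in \ker \phik{s}{k}$ for every $s$ with $\ell < s \le t$. With $\ell = 1$ and $t = 2$, this reduces to the single condition $v \in \ker \phik{2}{k} = \ker \epsilon_k$, which is exactly the assumption of the lemma. Finally, since the support of $v$ has size at most $n-3$, the complement $\{1,\ldots,n\}\setminus \supp v$ contains at least three elements, so we may pick $X \in \Omegac_3$ disjoint from $\supp v$, as required by the lemma (with $|X| = \ell + t = 3$).

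With all hypotheses in place, the Suspension Lemma produces the explicit equality
\[ v = \bigl( v (X \gamma_3) \bigr) \epsilon_{k+2}, \]
exhibiting $v$ as an element of $\im \epsilon_{k+2}$. There is no real obstacle here: all the combinatorics was absorbed into the Suspension Lemma, and the only matter of substance is that a support of size at most $n-3$ leaves room for the $3$-subset $X$ on which the suspension is performed.
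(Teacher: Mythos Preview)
Your proof is correct and matches the paper's approach exactly: both pick a $3$-subset disjoint from the support and invoke the Suspension Lemma with $t=2$, $\ell=1$ to obtain $v = \bigl(v(X\gamma_3)\bigr)\epsilon_{k+2}$. Your verification of the carry-free hypotheses is more explicit than the paper's (which simply refers back to the example following the Suspension Lemma), but the argument is the same.
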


\begin{proof}
By hypothesis, there is a $3$-subset $Z$ of $\{1,\ldots,n\}$ disjoint from the
support of $v$. By the argument seen in the example following the Suspension Lemma
(Lemma~\ref{lemma:suspension}), we have
\[ \bigl( v (Z \gamma_3) \bigr)\epsilon_{k+2} = v. \]
Therefore $v \in \im \epsilon_{k+2}$ as required.
\end{proof}

\begin{proposition}\label{prop:Hgen}
Let $k \in \N_0$. If $2k \le n$ then
$H_k$ is generated by $v_k + \im \epsilon_{k+2}$.
\end{proposition}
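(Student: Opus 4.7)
The plan is to induct on $k$. The base cases $k=0$ and $k=1$ are immediate, since $\F\Omega{0}$ is spanned by $v_0 = \varnothing$ and $\F\Omega{1}$ is generated under the $S_n$-action by $v_1 = \{2\}$.

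For the inductive step with $k \ge 2$, I would begin by applying Lemma~\ref{lemma:epsilonGenerators}, reducing the claim to showing that every element of the form $\{n\}v + \{n-1,n\}(v\gamma_{k-1})$, for $v \in \F\Omega{k-1}$ supported on $\{1,\ldots,n-2\}$ with $v\epsilon_{k-1} = 0$, lies in $\F S_n \cdot v_k + \im\epsilon_{k+2}$. Viewing $v$ as an element of the analogous chain complex for $S_{n-2}$ acting on $\{1,\ldots,n-2\}$ (for which $n-2 \ge 2(k-1)$ holds because $n \ge 2k$), the inductive hypothesis supplies a decomposition $v = \sum_i v_{k-1}\pi_i + u$ with each $\pi_i \in S_{n-2}$ and $u = u_0\epsilon_{k+1}$ for some $u_0 \in \F\Omegan{k+1}{n-2}$. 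This splits the task into two pieces.

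For the $v_{k-1}\pi_i$ summands, I would first factor $G_{k-1} = G_{k-2} \times \langle (2k-3,2k-2)\rangle$ in the definition of $v_k$ to obtain the identity $v_k = \{2k\}\, v_{k-1} + \{2k-3,2k\}(v_{k-1}\gamma_{k-1})$. The support of $v_{k-1}$ is $\{1,\ldots,2k-4,2k-2\}$, missing both $2k-3$ and $2k$; hence there exists $\pi' \in S_n$ swapping $2k-3 \leftrightarrow n-1$ and $2k \leftrightarrow n$ that fixes this support pointwise, so $v_k\pi' = \{n\}v_{k-1} + \{n-1,n\}(v_{k-1}\gamma_{k-1})$. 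Since each $\pi_i \in S_{n-2}$ fixes $n-1$ and $n$, I conclude that $\{n\}(v_{k-1}\pi_i) + \{n-1,n\}((v_{k-1}\pi_i)\gamma_{k-1}) = v_k \pi' \pi_i$ lies in $\F S_n \cdot v_k$.

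For the $u$ summand, the aim is to show $\{n\}u + \{n-1,n\}(u\gamma_{k-1}) \in \im\epsilon_{k+2}$. I would propose the lift $\alpha = \{n\}u_0 + \{n-1,n\}(u_0\gamma_{k+1}) \in \F\Omega{k+2}$ and compute $\alpha\epsilon_{k+2}$ by the Splitting Rule (Lemma~\ref{lemma:split}), using Lemma~\ref{lemma:composition} to identify $\gamma_{k+1}\gamma_k = 0$ (since $\binom{2}{1}$ is even) and $\gamma_{k+1}\epsilon_k = \epsilon_{k+1}\gamma_{k-1} = \phik{3}{k+1}$ (since $\binom{3}{1}$ is odd). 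When the two contributions are summed, the two resulting copies of $u_0\gamma_{k+1}$ cancel in characteristic two, yielding exactly $\alpha\epsilon_{k+2} = \{n\}u + \{n-1,n\}(u\gamma_{k-1})$. The hard part will be engineering this lift so that the unwanted cross-terms vanish; the cancellation invokes characteristic two twice at once --- through $\gamma_{k+1}\gamma_k = 0$ and through $2\, u_0\gamma_{k+1} = 0$ --- and is the technical heart of the induction.
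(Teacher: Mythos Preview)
Your argument is correct and, in one respect, cleaner than the paper's. The key difference is in how you dispose of the ``image'' summand $u = u_0\epsilon_{k+1}$. The paper inducts on $n$ and, having reduced to $u$ of the special form $Y\epsilon_{k+1}$ with $Y = \{1,\ldots,k+1\}$, observes that $\{n\}u + \{n-1,n\}(u\gamma_{k-1})$ has support of size $k+3$; it then invokes the Suspension Lemma (Lemma~\ref{lemma:epsilonSuspension}), which needs $k+3 \le n-3$, i.e.\ $n \ge k+6$. This forces computer verification of the borderline cases $(k,n)\in\{(3,6),(3,7),(3,8),(4,8),(4,9),(5,10)\}$. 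Your explicit lift $\alpha = \{n\}u_0 + \{n-1,n\}(u_0\gamma_{k+1})$ and the Splitting-Rule computation showing $\alpha\epsilon_{k+2} = \{n\}u + \{n-1,n\}(u\gamma_{k-1})$ (via $\gamma_{k+1}\gamma_k = 0$, $\gamma_{k+1}\epsilon_k = \epsilon_{k+1}\gamma_{k-1}$ and the characteristic-two cancellation of the two $u_0\gamma_{k+1}$ terms) works for every $n \ge 2k$, so no {\sc Magma} checks are needed. What you lose is the conceptual link to suspension; what you gain is a completely self-contained induction.

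One small point of phrasing: ``swapping $2k-3 \leftrightarrow n-1$ and $2k \leftrightarrow n$'' is not literally possible when these four numbers are not distinct (for instance $n=2k$ gives $n=2k$, and $n=2k+1$ gives $n-1=2k$). What you actually need---and what exists---is any $\pi'\in S_n$ with $(2k-3)\pi' = n-1$, $(2k)\pi' = n$, and fixing the support $\{1,\ldots,2k-4,2k-2\}$ of $v_{k-1}$ pointwise; this is available because $\{2k-3,2k,n-1,n\}$ is disjoint from that support whenever $n \ge 2k$. With that adjustment your identity $v_k\pi' = \{n\}v_{k-1} + \{n-1,n\}(v_{k-1}\gamma_{k-1})$ and the rest of the argument go through as written.
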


\newcommand{\Vm}{V}
\begin{proof}
We work by induction on $n$ dealing with all admissible $k$ at once.
The inductive step below is effective when $k \ge 2$ and $k + 6 \le n$. 
Since $v_0 = \varnothing$ and $v_1 = \{2\}$ generate $\F\Omega{0}$ and $\F\Omega{1}$, respectively,
the result holds if $k < 2$. When $k=2$, Lemma~\ref{lemma:epsilonGenerators}
implies that $H_2$ is generated by all $\{n\} \{j\} + \{n-1,n\} + \im \epsilon_4$,
where $j \in \{1,\ldots, n-2\}$. Therefore $H_2$ is generated by 
$v_2 = \{2,4\} + \{1,4\} + \im \epsilon_4$ as required.
When $k=3$ and $n \in \{6,7,8\}$, or $k=4$ and $n \in \{8,9\}$, or $k=5$ and $n = 10$
the proposition has been checked using the computer algebra package {\sc Magma}.\footnote{{\sc Magma} code for constructing the $\phik{t}{k}$ homomorphisms and verifying
these claims may be downloaded from the author's webpage: \url{www.rhul.ac.uk/~uvah099/.}} 

For the inductive step we may suppose, by the previous paragraph, that $k \ge 2$ and $k+ 6 \le n$.
By Lemma~\ref{lemma:epsilonGenerators}, $H_k$ is generated by the elements $\{n \} v + \{n-1,n\} 
(v\gamma_{k-1})$
for $v \in V$, where \smash{$V = \ker \epsilon^{[n-2]}_{k-1} : \F\Omegan{k-1}{n-2}
\rightarrow 
\F\Omegan{k-3}{n-2}$}. (The bracketed $n-2$  emphasises that these are modules 
and module homomorphisms for $\F S_{n-2}$.)
The map $\smash{\epsilon^{[n-2]}_{k-1}}$ is part of the  sequence
\[ \F\Omegan{k+1}{n-2} \xrightarrow{\epsilon^{[n-2]}_{k+1}} 
\F\Omegan{k-1}{n-2}
\xrightarrow{\epsilon^{[n-2]}_{k-1}} 
\F\Omegan{k-3}{n-2}. \]
Observe that \hbox{$H_{k-1}^{[n-2]} = V / \hskip-0.75pt\im \epsilon^{[n-2]}_{k+1}$}. 
Since $2(k-1) \le n-2$, the inductive hypothesis for $n-2$ implies
that \smash{$\Vm / \hskip-0.75pt\im \epsilon^{[n-2]}_{k+1}$} 
is generated by 
\smash{$v_{k-1} + \im \epsilon^{[n-2]}_{k+1}$}. 
Since \smash{$\im \epsilon^{[n-2]}_{k+1}$} is generated by $Y \epsilon_{k+1}$, where
$Y = \{1,\ldots, k+1\}$, it follows that $H_k$ is generated by
$\{n\} v_{k-1} + \{n-1,n\} (v_{k-1} \gamma_{k-1}) + \im \epsilon_{k+2}$
together with $u + \im \epsilon_{k+2}$, where
\[ u = \{n\} (Y \epsilon_{k+1}) + \{n-1,n\} (Y \epsilon_{k+1} \gamma_{k-1}). \]
The support of $u$
is $\{1,\ldots,k+1\} \cup \{n-1,n\}$, of size $k+3$. Since $k+6 \le n$, Lemma~\ref{lemma:epsilonSuspension}
implies that $u \in \im \epsilon_{k+2}$.

The first summand in the other generator $\{n\} v_{k-1} + \{n-1,n\} (v_{k-1} \gamma_{k-1})$
is $\sum_{\tauperm \in G_{k-2}} \bigl( \{2,4,\ldots, 2(k-2)\} \tauperm \cup \{ 2(k-1), n\}\bigr)$,
and, by Lemma~\ref{lemma:vkdelta}, 
the second summand is
$\sum_{\tauperm \in G_{k-2}} \bigl( \{2,4, \ldots,  2(k-2) \} \tauperm \cup \{n-1,n\} \bigr)$.
Relabelling so that $n-1$ becomes $2(k-1)-1$ and $n$ becomes $2k$, their
sum becomes~$v_{k}$. Therefore $v_{k} + \im \epsilon_{k+2}$ generates $H_k$.
\end{proof}

\begin{corollary}\label{cor:epsilonZero}
If $2k+2 \le n$ then $H_k = 0$.
\end{corollary}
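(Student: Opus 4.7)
The plan is to combine the generator found in Proposition~\ref{prop:Hgen} with the Suspension Lemma applied via Lemma~\ref{lemma:epsilonSuspension}, and the argument takes only a few lines once these preliminaries are granted.

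First I would invoke Proposition~\ref{prop:Hgen}: since $2k+2 \le n$ certainly implies $2k \le n$, the homology module $H_k$ is generated as an $\F S_n$-module by the single element $v_k + \im \epsilon_{k+2}$. Thus it suffices to prove that $v_k \in \im \epsilon_{k+2}$.

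Next I would determine the support of $v_k = \{2,4,\ldots,2k\}\sum_{\sigma \in G_{k-1}} \sigma$. Each $\sigma \in G_{k-1}$ fixes $2k$ and acts on $\{1,\ldots,2k-2\}$ by independently transposing, or not, each pair $(2j-1,2j)$ for $1 \le j \le k-1$. Hence every $\sigma$-translate of $\{2,4,\ldots,2k\}$ is contained in the $(2k-1)$-element set $\{1,2,\ldots,2k-2\}\cup\{2k\}$, and as $\sigma$ ranges over $G_{k-1}$ every element of this set appears. So $v_k$ has support of size exactly $2k-1$.

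Finally, the hypothesis $2k+2 \le n$ is equivalent to $2k - 1 \le n - 3$, so the support of $v_k$ has size at most $n-3$. Since $v_k \in \ker \epsilon_k$ (either by direct check, or as noted in the discussion of James's work just before Section~\ref{sec:prelim}, or simply because $v_k + \im \epsilon_{k+2}$ represents a class in $H_k$), Lemma~\ref{lemma:epsilonSuspension} applies and yields $v_k \in \im \epsilon_{k+2}$. Therefore $H_k = 0$. There is no real obstacle here: the whole content of the corollary has been packaged into the preceding proposition and the suspension lemma, and one only needs the elementary observation that $v_k$ is supported on $2k-1$ points.
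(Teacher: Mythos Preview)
Your proof is correct and follows essentially the same approach as the paper: invoke Proposition~\ref{prop:Hgen} to reduce to the single generator $v_k + \im\epsilon_{k+2}$, observe that the support of $v_k$ is the $(2k-1)$-element set $\{1,\ldots,2k-2,2k\}$, and then apply Lemma~\ref{lemma:epsilonSuspension} using $2k-1 \le n-3$. Your write-up is slightly more explicit about why $v_k$ has this support and why $v_k\in\ker\epsilon_k$, but the argument is the same.
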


\begin{proof}
By Proposition~\ref{prop:Hgen}, $H_k$ is generated by $v_k + \im \epsilon_{k+2}$.
The support of $v_k$ is $\{1,\ldots, 2k-2, 2k\}$, of size $2k-1$. Since
$2k+2 \le n$, it follows from Lemma~\ref{lemma:epsilonSuspension} that
$v_k \in \im \epsilon_{k+2}$. Hence $H_k = 0$.
\end{proof}

By the duality in Proposition~\ref{prop:dual}(i) we may assume that $2k \le n$. Therefore the previous corollary
determines all the homology modules $H_k$ except when $k = m$ and either $n = 2m$ or $n = 2m+1$.
In these cases the non-zero homology reflects the obstruction to exactness identified
in Proposition~\ref{prop:cfs}.

To complete the proof of Theorem~\ref{thm:epsilonHomology} we show, by induction on $n$,
that the generator $v_m + \im \epsilon_{m+2}$ of $H_m$ given by
Proposition~\ref{prop:Hgen} generates the claimed modules.
The base case is $n=1$, 
in which case
the chain complex $\F\Omega{2} \rightarrow \F\Omega{0} \rightarrow \F\Omega{-2}$
has two zero modules and homology $H_0 = \F\Omega{0} \cong \F \cong D^{(1,0)}$, as required.

\subsubsection*{Inductive step even to odd}
Suppose that $n=2m+1$ so $n-1 = 2m$. 
The restriction of the sequence 
$\F\Omega{m+2} \xrightarrow{\epsilon_{m+2}} \F\Omega{m} \xrightarrow{\epsilon_m} \F\Omega{m-2}$ to $S_{2m}$ is
the direct sum of
\begin{align*} &\F\Omegan{m+2}{2m} \xrightarrow{\epsilon_{m+2}} 
\F\Omegan{m}{2m}\, \xrightarrow{\,\,\,\hskip0.5pt\epsilon_{m}\,\,\,\hskip0.5pt} \F\Omegan{m-2}{2m},
\\
&\F\Omegan{m+1}{2m} \xrightarrow{\epsilon_{m+1}} \F\Omegan{m-1}{2m} 
\xrightarrow{\epsilon_{m-1}} \F\Omegan{m-3}{2m}. 
\end{align*}
(For readability, and since the distinction is no longer so vital, we omit the $[2m]$ label on the 
two-step boundary maps.)
By induction the second sequence is exact. Again by induction, the first
has non-zero homology $E^{(m+1,m-1)}$ in degree $m$. Therefore
\[ H_m \Res_{S_{2m}} 
\cong\, \begin{matrix} D^{(m+1,m-1)} \\ D^{(m+1,m-1)} \end{matrix}\, . \] 
By Lemma~\ref{lemma:twoRowSimples}(iv), the two-row simple modules for $\F S_{2m+1}$ 
whose restrictions to $S_{2m}$ may have $D^{(m+1,m-1)}$ as a composition
factor are $D^{(m+1,m)}$ and $D^{(m+2,m-1)}$. By Proposition~\ref{prop:cfs}(ii), $D^{(m+1,m)}$
appears exactly once in $H_m$. By Nakayama's Conjecture (see \cite[6.1.21]{JK}),
$D^{(m+2,m-1)}$ is in a different block to $D^{(m+1,m)}$. Since
$H_m \!\hskip-0.5pt\res_{S_{2m}}$ is indecomposable, we have $H_m \cong D^{(m+1,m)}$
as required.

\subsubsection*{Inductive step odd to even}
Suppose that $n=2m$ so $n-1 = 2m-1$. 
The restriction of the sequence 
$\F\Omega{m+2} \rightarrow \F\Omega{m} \rightarrow \F\Omega{m-2}$ to $S_{2m}$ is
the direct sum of
\begin{align*} &\F\Omegan{m+2}{2m-1}\xrightarrow{\epsilon_{m+2}} 
\F\Omegan{m}{2m-1} \xrightarrow{\,\,\,\hskip0.5pt\epsilon_{m}\,\,\,\hskip0.5pt} \F\Omegan{m-1}{2m-1},
\\
&\F\Omegan{m+1}{2m-1} \xrightarrow{\epsilon_{m+1}} \F\Omegan{m-1}{2m-1} \xrightarrow{\epsilon_{m-1}} \F\Omegan{m-3}{2m-1}. 
\end{align*}
By Proposition~\ref{prop:dual} these sequences are dual to one another. By induction,
each has homology $D^{(m,m-1)}$. Hence
\[ H_m \Res_{S_{2m-1}} \cong D^{(m,m-1)} \oplus D^{(m,m-1)}. \]
By Lemma~\ref{lemma:twoRowSimples}(iv), the only two-row simple module for $\F S_{2m}$ whose restriction
to $S_{2m-1}$ may have $D^{(m,m-1)}$
as a composition factor is $D^{(m+1,m-1)}$. By Proposition~\ref{prop:cfs}(i), $D^{(m+1,m-1)}$ 
appears exactly twice in $H_m$.
Hence either $H_m \cong D^{(m+1,m-1)} \oplus D^{(m+1,m-1)}$ or $H_m$ is a non-split extension
of $D^{(m+1,m-1)}$ by itself.
By Proposition~\ref{prop:Hgen}, $H_m$ is generated by $v_m + \im \epsilon_{m+2}$.
Therefore $H_m$ is cyclic. Since the direct sum of two non-zero isomorphic modules 
is not cyclic, it follows that $H_m$ is a non-split extension, as required.

\medskip
This completes the proof of Theorem~\ref{thm:epsilonHomology}. As a corollary we get
a new proof that $\dim D^{(m+1,m-1)} = 2^{m-1}$ and $\dim D^{(m+1,m)} = 2^m$.
For this we need the binomial identity
\begin{equation}\label{eq:binom} \sum_{j} (-1)^j\binom{2m+1}{2j}  
= \begin{cases} (-1)^{m/2} 2^m & \text{if $m$ is even} \\
(-1)^{(m+1)/2} 2^m & \text{if $m$ is odd,}\end{cases}
\end{equation}
which is most easily proved by taking real parts in 
\[ 2^m \i^m + 2^m \i^{m+1} =  (1+\i)^{2m+1} = 
\sum_j (-1)^j \binom{2m+1}{2j} + \i \sum_j (-1)^j \binom{2m+1}{2j+1}.\]

%

\begin{corollary}\label{cor:dimensions}
We have $\dim D^{(m+1,m-1)} = 2^{m-1}$ and $\dim D^{(m+1,m)} = 2^m$.
\end{corollary}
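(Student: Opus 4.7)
The plan is to extract each dimension from Theorem~\ref{thm:epsilonHomology} by computing the Euler characteristic of an appropriate chain complex and matching it against the binomial identities~\eqref{eq:evenBinom} and~\eqref{eq:binom}. The key fact is that for a bounded chain complex of finite-dimensional $\F$-vector spaces the alternating sum of dimensions equals the alternating sum of the dimensions of the homology modules.

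First I would establish $\dim D^{(m+1,m)} = 2^m$. Take $n = 2m+1$ and consider
\[ 0 \to \F\Omega{2m} \xrightarrow{\epsilon_{2m}} \F\Omega{2m-2} \xrightarrow{\epsilon_{2m-2}} \cdots \xrightarrow{\epsilon_{2}} \F\Omega{0} \to 0. \]
By Theorem~\ref{thm:epsilonHomology} the only non-zero homology modules are $H_m \cong H_{m+1} \cong D^{(m+1,m)}$; since exactly one of $m$ and $m+1$ is even, exactly one of them appears in this even-indexed subcomplex, and it appears with multiplicity one. The Euler characteristic of the complex is $\sum_{j=0}^{m} (-1)^j \binom{2m+1}{2j}$, which by~\eqref{eq:binom} equals $\pm 2^m$; this must also equal $\pm \dim D^{(m+1,m)}$, so $\dim D^{(m+1,m)} = 2^m$.

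For $\dim D^{(m+1,m-1)}$ the cleanest route is to invoke the restriction isomorphism $D^{(m+1,m-1)}\!\res_{S_{2m-1}} \cong D^{(m,m-1)}$ already recorded in Theorem~\ref{thm:epsilonHomology}. Identifying $D^{(m,m-1)}$ as $D^{(m'+1,m')}$ with $m' = m-1$ and applying the previous step yields $\dim D^{(m+1,m-1)} = \dim D^{(m,m-1)} = 2^{m-1}$. A self-contained alternative, when $m$ is even, is to run the same Euler-characteristic argument on the complex $0 \to \F\Omega{2m} \to \cdots \to \F\Omega{0} \to 0$ for $n = 2m$: its unique non-zero homology module is $E^{(m+1,m-1)}$ in degree $m$, of dimension $2\dim D^{(m+1,m-1)}$, and~\eqref{eq:evenBinom} evaluates the alternating sum to $\pm 2^m$.

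I do not foresee a genuine obstacle: both dimensions reduce, via Theorem~\ref{thm:epsilonHomology}, to equating $\pm 2^m$ on the binomial side with $\pm \dim$ on the homology side. The only care required is locating the unique non-zero homology module in the correct (even-indexed) subcomplex, and verifying the trivial base case $m = 0$, where $D^{(1,0)} \cong \F$ has dimension $2^0 = 1$.
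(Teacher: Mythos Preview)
Your proposal is correct and follows essentially the same route as the paper: both use the restriction isomorphism $D^{(m+1,m-1)}\!\res_{S_{2m-1}} \cong D^{(m,m-1)}$ from Theorem~\ref{thm:epsilonHomology} to reduce to the odd case, and then compute $\dim D^{(m+1,m)}$ by equating the Euler characteristic of the even-indexed two-step complex for $n=2m+1$ with the binomial identity~\eqref{eq:binom}. The paper treats the $m$ even and $m$ odd cases separately, whereas you handle both at once by noting that exactly one of $m,\,m+1$ lies in the even-indexed complex; otherwise the arguments are the same.
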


\begin{proof}
By part of Theorem~\ref{thm:epsilonHomology}, we have $D^{(m+1,m-1)}\hskip-2.5pt\res_{S_{2m-1}} \cong D^{(m,m-1)}$.
It therefore suffices to prove the second claim. Suppose that $m$ is even. Consider the chain complex
of $\F S_{2m+1}$-modules
\[ \scalebox{0.94}{$\displaystyle 0 \rightarrow \F\Omega{2m} \xrightarrow{\epsilon_{2m}} \cdots \xrightarrow{\epsilon_{m+4}}
\F\Omega{m+2} \xrightarrow{\epsilon_{m+2}} \F\Omega{m} \xrightarrow{\epsilon_{m}} \F\Omega{m-2} 
\xrightarrow{\epsilon_{m-2}} 
\cdots \xrightarrow{\epsilon_{2}} \F\Omega{0} \rightarrow 0.$} \]
By Theorem~\ref{thm:epsilonHomology} this chain complex has non-zero homology
 uniquely in degree~$m$, where $H_m \cong D^{(m+1,m)}$. The 
 alternating sum of the dimensions of the modules in a chain complex
  agrees with the alternating sum of the dimensions
 of the homology modules.
Hence 
\[ \sum_{j=0}^m (-1)^j \dim \F\Omega{2j} = \sum_{j=0}^m (-1)^j \dim H_{2j} = (-1)^{m/2} \dim D^{(m+1,m)}. \]
Since the left-hand side is $\sum_{j=0}^m (-1)^j \binom{2m+1}{2j}$, the result follows
from~\eqref{eq:binom}. The proof is similar if $m$ is odd.
\end{proof}

We end by using the one-step boundary maps $\gamma_k : \F \Omega{k} \rightarrow
\F \Omega{k-1}$ to give
a more explicit description of
the non-split extension in Theorem~\ref{thm:epsilonHomology}.
The following calculation is required.


\begin{lemma}\label{lemma:imToKer} If $0 \le k \le n-2$ then
$(\im \epsilon_{k+2}) \gamma_k \gamma_k^\star \subseteq \ker \epsilon_k$.
\end{lemma}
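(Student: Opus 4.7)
The plan is to prove the stronger statement that the composition
\[
\epsilon_{k+2}\,\gamma_k\,\gamma_k^\star\,\epsilon_k \colon \F\Omega{k+2} \to \F\Omega{k-2}
\]
is identically zero; the inclusion $(\im \epsilon_{k+2})\gamma_k\gamma_k^\star \subseteq \ker\epsilon_k$ then follows at once.

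First I would simplify $\epsilon_{k+2}\gamma_k$ via Lemma~\ref{lemma:composition}: since $\binom{3}{2}=3$ is odd, $\epsilon_{k+2}\gamma_k = \phik{3}{k+2}$, so for $Y \in \Omega{k+2}$ we have $Y\epsilon_{k+2}\gamma_k = \sum_{W \subset Y,\,|W|=k-1} W$. Applying $\gamma_k^\star$ termwise and reindexing the resulting pairs $(W,j)$ by the $k$-set $Z = W \cup \{j\}$, the multiplicity of a given $Z$ is: $k$ if $Z \subset Y$ (any of the $k$ elements can play the role of $j$), $1$ if $|Z \cap Y|=k-1$ (only the unique element of $Z \setminus Y$ can be $j$), and $0$ otherwise. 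Hence
\[
Y\epsilon_{k+2}\gamma_k\gamma_k^\star \;=\; k\!\!\sum_{Z \subset Y,\,|Z|=k}\!\! Z \;+\! \sum_{|Z|=k,\,|Z \cap Y|=k-1}\!\!\!\! Z.
\]

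Finally I would apply $\epsilon_k$ and regroup by $(k-2)$-subsets $U$, reading off the coefficient of each $U$ as the number of admissible $Z \supset U$ in each sum. In the first sum, every $U \subset Y$ extends to $\binom{|Y \setminus U|}{2} = \binom{4}{2} = 6$ choices of $Z$, giving coefficient $6k$, which is even. In the second sum the count splits by $|U \cap Y|$: when $|U \cap Y| = k-2$ (so $U \subset Y$) it is $4(n-k-2)$; when $|U \cap Y| = k-3$ it is $\binom{5}{2}=10$; and when $|U \cap Y| \le k-4$ the required size of $(Z\setminus U) \cap Y$ exceeds $|Z \setminus U| = 2$, leaving no contribution. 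All relevant counts are even, so the output vanishes in characteristic two.

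The only obstacle is the bookkeeping in the last step; there is no conceptual difficulty, just a careful parity analysis of the multiplicities arising from the set overlaps.
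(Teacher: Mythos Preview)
Your proof is correct and follows essentially the same approach as the paper: both compute $Z\epsilon_{k+2}\gamma_k\gamma_k^\star$ as $k\epsilon_{k+2} + \psi$ (where $\psi$ is the ``remove three, add one'' map) and then kill $\psi\epsilon_k$ by the same parity counts $\binom{4}{3}\cdot(n-k-2)$ and $\binom{5}{3}=\binom{5}{2}$. The only cosmetic differences are that you invoke Lemma~\ref{lemma:composition} to write $\epsilon_{k+2}\gamma_k=\phik{3}{k+2}$ before applying $\gamma_k^\star$, and you dispose of the $k\epsilon_{k+2}$ term by the explicit coefficient $6k$ rather than by citing $\epsilon_{k+2}\epsilon_k=0$.
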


\begin{proof}
Fix $Z \in \Omega{k+2}$. 
If $Y \in \Omega{k}$ 
has a non-zero coefficient in $Z \epsilon_{k+2} \gamma_k \gamma_k^\star$ then either $Y = Z \backslash \{i,i'\}$, for distinct $i,i' \in Z$
or $Y = Z \cup \{ j\} \backslash \{i,i',i''\}$
for distinct $i,i', i'' \in Z$ and $j \not\in Z$. 
In the former case the coefficient of $Y$ is $k$ and in the latter it is $1$.
Therefore $\epsilon_{k+2} \gamma_k \gamma_k^\star = k \epsilon_{k+2} + \psi$
where 
\[ Z \psi = \sum_{i,i',i'' \in Z\atop j \not\in Z} 
\bigl( Z \cup \{j\}
\backslash \{i,i',i''\} \bigr). \]
Since $\epsilon_{k+2} \epsilon_k = 0$, it suffices 
to prove that $\psi \epsilon_k= 0$. We may suppose that $k \ge 2$. 
If
$X \in \Omega{k-2}$ has a non-zero coefficient in $Z \psi \epsilon_k$
then either $X = Z \backslash D$ where $D \subseteq Z$ and $|D| = 4$ or
$X = Z \cup \{j\} \backslash E$ where $E \subseteq Z$, $|E| = 5$ and $j \not\in Z$.
In both cases the coefficient is in fact zero: 
in the first there are $\binom{4}{3}$ choices for $\{i,i',i''\} \subseteq D$
and in the second there are $\binom{5}{3}$ choices for $\{i,i',i''\} \subseteq E$.
\end{proof}

Let $n = 2m$ be even 
and let $U$ be the submodule of $\F\Omega{m}$ generated
by $v_m + v_m(2m-1,2m)$.

\begin{proposition}\label{prop:chain}
Under the canonical isomorphism $\F\Omega{m} \cong
M^{(m,m)}$, the image of $U$ is $S^{(m,m)}$. There is a chain
\[   \rad U + \im \epsilon_{m+2}\subseteq U + \im \epsilon_{m+2} \subseteq
\ker \epsilon_m \]
in which the two quotients are isomorphic to $D^{(m+1,m-1)}$. 
\end{proposition}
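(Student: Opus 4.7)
The plan is as follows. First I would verify that under the canonical isomorphism $\F\Omega{m} \cong M^{(m,m)}$, the generator $v_m + v_m(2m-1,2m)$ of $U$ corresponds to the polytabloid $e_t$. Since $G_m = G_{m-1} \times \langle (2m-1,2m) \rangle$ and $(2m-1,2m)$ commutes with every element of $G_{m-1}$, we have $\{2,4,\ldots,2m\} \sum_{\sigma \in G_m} \sigma = v_m + v_m(2m-1,2m)$, which by~\eqref{eq:twoRowSpecht} is precisely $e_t$; since $S^{(m,m)}$ is generated by $e_t$, this gives $U \cong S^{(m,m)}$. Both $U$ and $\im\epsilon_{m+2}$ lie in $\ker\epsilon_m$: the former because $v_m \in \ker\epsilon_m$ (as observed in \S\ref{sec:background}), the latter because $\epsilon_{m+2}\epsilon_m = 0$.

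Let $\bar U$ and $\overline{\rad U}$ denote the images of $U$ and $\rad U$ in $H_m = \ker\epsilon_m/\im\epsilon_{m+2}$. By the third isomorphism theorem, $(U + \im\epsilon_{m+2})/(\rad U + \im\epsilon_{m+2}) = \bar U / \overline{\rad U}$ and $\ker\epsilon_m/(U + \im\epsilon_{m+2}) = H_m / \bar U$. By Theorem~\ref{thm:epsilonHomology}, $H_m \cong E^{(m+1,m-1)}$ is uniserial with top and socle both isomorphic to $D^{(m+1,m-1)}$, so its only proper nonzero submodule is $\soc H_m$. The key composition-factor input is that, by Proposition~\ref{prop:cfs}(i), $[\F\Omega{m} : D^{(m+1,m-1)}] = 2$, and Theorem~\ref{thm:epsilonHomology} places both copies inside $H_m$; hence neither $\im\epsilon_{m+2}$ nor $\F\Omega{m}/\ker\epsilon_m$ has $D^{(m+1,m-1)}$ among its composition factors.

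Granting the classical fact from Specht-module theory that $S^{(m,m)}$ has $D^{(m+1,m-1)}$ as its top with multiplicity exactly one (since $(m,m)$ 2-regularizes to $(m+1,m-1)$), it follows that $\rad U$ contains no $D^{(m+1,m-1)}$ composition factor. Its image $\overline{\rad U}$ in $H_m$ then has only $D^{(m+1,m-1)}$ as its possible factors (being a subquotient of $H_m$) yet none at all, forcing $\overline{\rad U} = 0$, i.e., $\rad U \subseteq \im\epsilon_{m+2}$. Similarly $U \cap \im\epsilon_{m+2}$ has no $D^{(m+1,m-1)}$, so the unique such factor in $U$ persists in $\bar U = U/(U \cap \im\epsilon_{m+2})$; as a nonzero proper submodule of $H_m$, $\bar U$ must equal $\soc H_m \cong D^{(m+1,m-1)}$. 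Plugging in gives $\bar U/\overline{\rad U} \cong D^{(m+1,m-1)}$ and $H_m/\bar U \cong D^{(m+1,m-1)}$, completing the proof.

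The main obstacle is the appeal to James' result on the top of $S^{(m,m)}$. For a self-contained derivation one would instead argue directly that $\bar U$ is both nonzero and proper in $H_m$, using the explicit form of the generator $v_m + v_m(2m-1,2m)$ in conjunction with the cyclic generator $v_m + \im\epsilon_{m+2}$ of $H_m$ supplied by Proposition~\ref{prop:Hgen}.
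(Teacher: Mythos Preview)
Your proof is correct and structurally the same as the paper's: both identify $U$ with $S^{(m,m)}$ via~\eqref{eq:twoRowSpecht}, establish that the top of $S^{(m,m)}$ is $D^{(m+1,m-1)}$ with multiplicity one, and then use the composition-factor count (both copies of $D^{(m+1,m-1)}$ in $\F\Omega{m}$ sit in $H_m$, none in $\im\epsilon_{m+2}$) to read off the two quotients.

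The one substantive difference is the ingredient you flag yourself. You appeal to James' $2$-regularisation theorem to see that $\mathrm{top}\,S^{(m,m)} \cong D^{(m+1,m-1)}$; the paper instead gives a self-contained argument using only tools already on hand: by the Branching Rule $S^{(m,m)}\!\res_{S_{2m-1}} \cong S^{(m,m-1)}$ has top $D^{(m,m-1)}$, and by Lemma~\ref{lemma:twoRowSimples}(iv) the only two-row simple $\F S_{2m}$-module whose restriction can contain $D^{(m,m-1)}$ is $D^{(m+1,m-1)}$; the multiplicity-one statement then follows because $D^{(m+1,m-1)}\!\res_{S_{2m-1}} \cong D^{(m,m-1)}$ (from Theorem~\ref{thm:epsilonHomology}) and $D^{(m,m-1)}$ occurs once in $S^{(m,m-1)}$. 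Your version is quicker if one is willing to quote regularisation; the paper's buys independence from that external result. Your extra observation that $\overline{\rad U}=0$, i.e.\ $\rad U \subseteq \im\epsilon_{m+2}$, is a pleasant bonus not stated explicitly in the paper.
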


\begin{proof}
By Theorem~\ref{thm:epsilonHomology}, 
$v_m \in \ker \epsilon_{m}$. Therefore $U$ is a submodule
of $\ker \epsilon_m$. By~\eqref{eq:twoRowSpecht} in \S\ref{sec:prelim},
under the  canonical isomorphism $\F\Omega{m} \cong M^{(m,m)}$, 
the image of $v_m + v_m (2m-1,2m)$ is
the polytabloid $e_t$, where $t$ is the standard tableau of shape $(m,m)$
having $\{2,4,\ldots, 2m\}$ in its bottom row; 
this polytabloid generates the Specht module 
$S^{(m,m)}$. Therefore $U \cong S^{(m,m)}$.

By the Branching Rule (see \cite[Theorem 9.3]{James})
the restriction of
$S^{(m,m)}$ to $S_{2m-1}$ is $S^{(m,m-1)}$; this module has $D^{(m,m-1)}$
as its unique top composition factor. By Lemma~\ref{lemma:twoRowSimples}(iv),
the only two-row simple module for $\F S_{2m}$ whose restriction to
$S_{2m-1}$ may have $D^{(m,m-1)}$ as a composition factor is $D^{(m+1,m-1)}$.
Therefore, as noted by Benson in \cite[Lemma 5.2]{Benson}, $S^{(m,m)}$
has $D^{(m+1,m-1)}$ as its unique top composition factor, and the multiplicity
of~$D^{(m+1,m-1)}$ in $S^{(m,m)}$ is $1$. Hence $U / \rad U \cong D^{(m+1,m-1)}$.
By Lemma~\ref{lemma:twoRowSimples}(iii), 
$D^{(m+1,m-1)}$ is not a composition factor of $\im \epsilon_{m+2}$.
Since $\ker \epsilon_m / \im \epsilon_{m+2}$ has two composition factors of $D^{(m+1,m-1)}$,
it follows that
 the chain has the claimed quotients.
\end{proof}

\begin{proposition}\label{prop:hom}
Let $n = 2m$ be even.
The endomorphism $\gamma_m \gamma_m^\star$ of $\F\Omega{k}$ restricts
to an endomorphism of $\ker \epsilon_{m}$ satisfying
\begin{thmlist}
\item $v_m \gamma_m \gamma_m^\star = v_m + v_m(2m-1,2m)$;
\item $U \gamma_m \gamma_m^\star = 0$;
\item $(\im \epsilon_{m+2}) \gamma_m \gamma_m^\star \subseteq \im \epsilon_{m+2}$.
\end{thmlist}
\end{proposition}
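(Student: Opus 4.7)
The plan is to prove identities (i), (ii), (iii) as statements in $\F\Omega{m}$ first, then combine (i) and (iii) with Theorem~\ref{thm:epsilonHomology} and Proposition~\ref{prop:Hgen} to conclude that $\gamma_m\gamma_m^\star$ preserves $\ker\epsilon_m$.

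By Lemma~\ref{lemma:vkdelta}, $v_m\gamma_m = w_m$ where $w_m = \{2,4,\ldots,2(m-1)\}\sum_{\sigma\in G_{m-1}}\sigma$; its support consists of the $(m-1)$-subsets $\{s_1,\ldots,s_{m-1}\}\subseteq\{1,\ldots,2m-2\}$ with $s_i\in\{2i-1,2i\}$, each with coefficient~$1$. By~\eqref{eq:phiStar}, applying $\gamma_m^\star$ to an $(m-1)$-set~$X$ produces $\sum_{j\notin X}(X\cup\{j\})$. To prove~(i) I expand $w_m\gamma_m^\star$ and group by~$j$: for each $i\in\{1,\ldots,m-1\}$, the contribution from $j=2i-1$ equals the contribution from $j=2i$, because both enumerate the $m$-sets $\{s_1,\ldots,s_{i-1},2i-1,2i,s_{i+1},\ldots,s_{m-1}\}$ with $(s_k)_{k\ne i}\in\prod_{k\ne i}\{2k-1,2k\}$; so these pairs cancel in characteristic~$2$. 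Because $n=2m$, only $j\in\{2m-1,2m\}$ survive, contributing $v_m(2m-1,2m)+v_m$ and proving~(i). Statement~(ii) then follows by $\F S_n$-equivariance: applying $(2m-1,2m)$ to~(i) yields $v_m(2m-1,2m)\gamma_m\gamma_m^\star = v_m+v_m(2m-1,2m)$, so $(v_m+v_m(2m-1,2m))\gamma_m\gamma_m^\star = 0$; since $U$ is generated by $v_m+v_m(2m-1,2m)$, this gives~(ii).

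For~(iii), the proof of Lemma~\ref{lemma:imToKer} with $k=m$ gives
\[ \epsilon_{m+2}\gamma_m\gamma_m^\star \;=\; m\,\epsilon_{m+2} + \psi, \qquad Z\psi \;=\; \sum_{j\notin Z,\ \{i,i',i''\}\subset Z}\bigl(Z\cup\{j\}\setminus\{i,i',i''\}\bigr). \]
The first summand has image in $\im\epsilon_{m+2}$, so it suffices to show $Z\psi\in\im\epsilon_{m+2}$ for each $Z\in\Omega{m+2}$. I propose the preimage
\[ W_Z \;=\; \sum_{j\notin Z,\ i\in Z}\bigl(Z\cup\{j\}\setminus\{i\}\bigr)\ \in\ \F\Omega{m+2} \]
and verify $W_Z\epsilon_{m+2}=Z\psi$ by tracking the multiplicity of each target~$T$ in the expansion, grouped by $|T\triangle Z|$. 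A `long' target with $T\setminus Z=\{j\}$ and $|Z\setminus T|=3$ arises with multiplicity~$3$ (one for each choice of which element of $Z\setminus T$ is~$i$), while a `short' target $T\subset Z$ with $|Z\setminus T|=2$ arises with multiplicity $2(n-m-2)$ (free choice of $j\notin Z$ combined with a choice of which of the two removed elements is~$i$). In characteristic~$2$ these reduce to~$1$ and~$0$ respectively, exactly matching the coefficient pattern of $Z\psi$.

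For the restriction claim, by Theorem~\ref{thm:epsilonHomology} and Proposition~\ref{prop:Hgen}, $\ker\epsilon_m$ is generated as an $\F S_n$-module by $v_m$ together with $\im\epsilon_{m+2}$. By~(i) and Proposition~\ref{prop:chain}, $v_m\gamma_m\gamma_m^\star = v_m+v_m(2m-1,2m)\in U\subseteq\ker\epsilon_m$; by~(iii), $\im\epsilon_{m+2}$ is $\gamma_m\gamma_m^\star$-invariant; and $\F S_n$-equivariance then forces $(\ker\epsilon_m)\gamma_m\gamma_m^\star\subseteq\ker\epsilon_m$. I expect the multiplicity bookkeeping for~(iii) to be the main obstacle; the remaining steps fall out cleanly once the pairwise cancellation within each $\{2i-1,2i\}$ is observed for~(i).
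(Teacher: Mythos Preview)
Your arguments for (i) and (ii) are correct and essentially coincide with the paper's: both compute $v_m\gamma_m$ via Lemma~\ref{lemma:vkdelta}, then pair off the contributions from $j\in\{2i-1,2i\}$ for $i<m$, leaving only $j\in\{2m-1,2m\}$; and both deduce (ii) from (i) by applying $(2m-1,2m)$ and using that $U$ is generated by $v_m+v_m(2m-1,2m)$.

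For (iii) your route is genuinely different. The paper does \emph{not} construct a preimage. Instead it uses Lemma~\ref{lemma:imToKer} to get $(\im\epsilon_{m+2})\gamma_m\gamma_m^\star\subseteq\ker\epsilon_m$, and then argues with composition factors: by Lemma~\ref{lemma:twoRowSimples}(iii) the module $\im\epsilon_{m+2}$ has no composition factor $D^{(m+1,m-1)}$, so its image under $\gamma_m\gamma_m^\star$ cannot meet the two $D^{(m+1,m-1)}$ layers of $\ker\epsilon_m/\im\epsilon_{m+2}$ described in Proposition~\ref{prop:chain}, and Jordan--H\"older forces it back into $\im\epsilon_{m+2}$. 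Your explicit witness $W_Z=\sum_{j\notin Z,\,i\in Z}(Z\cup\{j\}\setminus\{i\})$ with $W_Z\epsilon_{m+2}=Z\psi$ is correct (the multiplicity count---$3$ for ``long'' targets and $2(n-m-2)$ for ``short'' ones---is right), and it bypasses all of that representation-theoretic input. This is more elementary and gives an effective preimage; the paper's argument is softer but leans on the structural results already established. Your explicit derivation of the ``restricts to $\ker\epsilon_m$'' claim from (i), (iii) and Proposition~\ref{prop:Hgen} is also a nice addition, since the paper states this without separate justification.
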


\begin{proof}
By Lemma~\ref{lemma:vkdelta},
$v_m \gamma_m = \{2,4,\ldots, 2(m-1)\} \sum_{\tauperm \in G_{m-1}}
\tauperm$. Hence
\[ v_m \gamma_m \gamma_m^\star = 
\sum_{\tauperm \in G_{m-1}} \sum_{1 \le i \le 2m \atop
i \not \in \{2,4,\ldots, 2(m-1)\}\tauperm } \bigl( \{2,4,\ldots, 2(m-1)\} \cup \{i\} \bigr). \]
There are summands corresponding to the pairs $(\tauperm,2j)$ and $(\tauperm(2j-1,2j),2j-1)$
if and only if $(2j)\tauperm = 2j-1$; when present, these summands are equal
are so cancel. The summands for $i = 2m$ give $v_m$ and the summands for $i=2m-1$
give $v_m (2m-1,2m)$. Hence
$v_m \gamma_m \gamma_m^\star = v_m + v_m (2m-1,2m)$, proving (i). 
Moreover, since $\bigl( 1 + (2m-1,2m)\bigr)^2 = 0$,
we have $\bigl(v_m + v_m(2m-1, 2m)\bigr) \gamma_m \gamma_m^\star = 0$. Hence
$U \gamma_m \gamma_m^\star = 0$, proving (ii).

By Lemma~\ref{lemma:imToKer}, $(\im \epsilon_{m+2}) \gamma_m \gamma_m^\star
\subseteq \ker \epsilon_{m+2}$. 
By Lemma~\ref{lemma:twoRowSimples}(iii), $\im \epsilon_{m+2}$ does not have $D^{(m+1,m-1)}$
as a composition factor. It therefore follows from Proposition~\ref{prop:chain}
and the Jordan--H{\"o}lder Theorem
that $(\im \epsilon_{m+2}) \gamma_m \gamma_{m}^\star \subseteq \im \epsilon_{m+2}$
as required for~(iii).
\end{proof}

\begin{corollary}\label{cor:nsplit}
Let $n = 2m$. The map $\theta : H_m \rightarrow H_m$ induced by restricting
$\gamma_m \gamma_m^\star$ to $\ker \theta_m$ is a well-defined $\F S_n$-endomorphism
of $H_m$ such that $\theta \not=0$ and $\theta^2 = 0$.
\end{corollary}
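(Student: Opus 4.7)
The plan is to combine the three properties of $\gamma_m \gamma_m^\star$ listed in Proposition~\ref{prop:hom} with the fact from Proposition~\ref{prop:Hgen} that $H_m$ is cyclic on the generator $v_m + \im \epsilon_{m+2}$, so that all four claims in the corollary reduce to a calculation at this single element. First I would note that $\gamma_m$ and $\gamma_m^\star$ are $\F S_n$-module homomorphisms, hence so is their composite, and that Proposition~\ref{prop:hom} already guarantees this composite carries $\ker \epsilon_m$ into itself and (by part~(iii)) $\im \epsilon_{m+2}$ into itself. It therefore induces a well-defined $\F S_n$-endomorphism $\theta$ of the quotient $H_m = \ker \epsilon_m / \im \epsilon_{m+2}$.

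For $\theta \neq 0$, Proposition~\ref{prop:hom}(i) gives
\[ \theta(v_m + \im \epsilon_{m+2}) = v_m + v_m(2m-1,2m) + \im \epsilon_{m+2}. \]
By definition $v_m + v_m(2m-1,2m)$ generates the submodule $U$ of $\ker \epsilon_m$, so if this class were zero in $H_m$ we would have $U \subseteq \im \epsilon_{m+2}$, contradicting Proposition~\ref{prop:chain}, where the quotient $(U + \im \epsilon_{m+2})/\im \epsilon_{m+2} \cong D^{(m+1,m-1)}$ is shown to be non-zero. Hence $\theta \neq 0$.

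For $\theta^2 = 0$, since $H_m$ is generated as an $\F S_n$-module by $v_m + \im \epsilon_{m+2}$ (Proposition~\ref{prop:Hgen}) and $\theta$ is $\F S_n$-linear, it suffices to check that $\theta^2$ kills this single generator. Applying $\theta$ once, we land in the class of $v_m + v_m(2m-1,2m) \in U$; applying $\gamma_m \gamma_m^\star$ again, Proposition~\ref{prop:hom}(ii) gives $U \gamma_m \gamma_m^\star = 0$, so $\theta^2(v_m + \im \epsilon_{m+2}) = 0 + \im \epsilon_{m+2} = 0$ in $H_m$.

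There is essentially no obstacle, because every ingredient has already been proved: the heart is the computation in Proposition~\ref{prop:hom}(i),(ii) that $\gamma_m \gamma_m^\star$ sends $v_m$ into $U$ and then squares to zero on $U$, combined with the cyclic-generator reduction coming from Proposition~\ref{prop:Hgen}. The only point requiring any care is to remember that $\theta^2 = 0$ on a cyclic module follows from $\theta^2$ vanishing on the generator only because $\theta$ commutes with the $\F S_n$-action, which is why the $\F S_n$-linearity step is recorded first. The resulting nilpotent $\theta$ witnesses directly that $H_m$ cannot split as a direct sum of copies of $D^{(m+1,m-1)}$, making the non-split extension $E^{(m+1,m-1)}$ in Theorem~\ref{thm:epsilonHomology} explicit.
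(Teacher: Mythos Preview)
Your proof is correct and follows essentially the same approach as the paper's, combining Proposition~\ref{prop:hom} for well-definedness and the computation on the generator with the cyclic generation of $H_m$ (which the paper cites via Theorem~\ref{thm:epsilonHomology} rather than Proposition~\ref{prop:Hgen}) and Proposition~\ref{prop:chain} for non-vanishing. One small imprecision: Proposition~\ref{prop:chain} asserts $(U + \im \epsilon_{m+2})/(\rad U + \im \epsilon_{m+2}) \cong D^{(m+1,m-1)}$, not $(U + \im \epsilon_{m+2})/\im \epsilon_{m+2} \cong D^{(m+1,m-1)}$, but either form of the chain shows $U \not\subseteq \im \epsilon_{m+2}$, which is all your argument needs.
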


\begin{proof}
By Proposition~\ref{prop:hom}, $\theta$ is well-defined. By Theorem~\ref{thm:epsilonHomology},
$H_m$ is 
generated by $v_m + \im \epsilon_{m+2}$. Therefore
$H_m \theta$ is generated by $v_m + v_m(2m-1,2m) + \im \epsilon_{m+2}$; 
by Propositions~\ref{prop:chain} and~\ref{prop:hom}(ii) this is a non-zero element of $H_m$ lying in $\ker \theta$.
\end{proof}

%
%

\section{Proof of Theorem~\ref{thm:gen}}\label{sec:gen}

In this section we prove the characterization in Theorem~\ref{thm:gen} of when

\vspace*{-15pt}
\[ \tag{\ref{eq:seq}}
\F\Omega{k+t}\xrightarrow{\phik{t}{k+t}}\F\Omega{k}\xrightarrow{\phik{t}{k}}\F\Omega{k-t}\] 
is exact. 
We showed in \S\ref{sec:background} that~\eqref{eq:seq} is always exact when $t=1$.
Thus Theorem~\ref{thm:gen}(i) is a sufficient condition.
Clearly~\eqref{eq:seq} is not exact when both $k+t > n$ and $k -t < 0$ and so
only the middle module is non-zero.
 In
\S\ref{subsec:injsurj} we deal with the case when 
there is exactly one zero module.
This leaves the most interesting case of three non-zero modules, described by 
(i) and~(iii). We 
show these conditions are necessary in \S\ref{subsec:nec} and 
sufficient in~\S\ref{subsec:suff}.

The following lemma indicates the obstruction to exactness removed by the condition $k+t \le n-k$.

\begin{lemma}\label{lemma:cfobstruction}
Suppose that $t > 1$ and $k \le n-k < k+t$. Then $\F\Omega{k}$ has a composition factor  not
present in either $\F\Omega{k+t}$ or $\F\Omega{k-t}$.
\end{lemma}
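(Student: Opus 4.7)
The plan is to exhibit an explicit two-row simple module that is a composition factor of $\F\Omega{k}$ but of neither $\F\Omega{k+t}$ nor $\F\Omega{k-t}$. The tools are Lemma~\ref{lemma:twoRowSimples}, which lists the two-row simples appearing in $\F\Omega{j}$ when $2j \le n$, and the duality $\F\Omega{r} \cong \F\Omega{n-r}$ from Proposition~\ref{prop:dual}(i), which lets us apply that lemma to $\F\Omega{k+t}$ even though $k+t$ may lie above $n/2$.

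I would split into the two cases $2k < n$ and $2k = n$. When $2k < n$, I take the candidate $D^{(n-k,k)}$. By Lemma~\ref{lemma:twoRowSimples}(i) it appears (exactly once) in $\F\Omega{k}$, while $\F\Omega{k-t}$ has only composition factors $D^{(n-r,r)}$ with $r \le k - t < k$, so $D^{(n-k,k)}$ is absent there. To handle $\F\Omega{k+t}$, apply Proposition~\ref{prop:dual}(i) to replace it by $\F\Omega{n-k-t}$; the hypothesis $n-k < k+t$ rearranges to $n-k-t < k$, and combined with $n < 2k+t$ gives $2(n-k-t) < n$, so Lemma~\ref{lemma:twoRowSimples}(i) applies to $\F\Omega{n-k-t}$ and yields only factors $D^{(n-r,r)}$ with $r \le n-k-t < k$. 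Hence $D^{(n-k,k)}$ is absent from $\F\Omega{k+t}$ as well.

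When $n = 2k$, the simple $D^{(k,k)}$ is not defined, so instead I would take $D^{(k+1,k-1)}$. Writing $n = 2m$ with $m=k$, Lemma~\ref{lemma:twoRowSimples}(iii) states that $D^{(m+1,m-1)}$ is a composition factor of $\F\Omega{j}$ if and only if $j \in \{m-1, m, m+1\}$. Since $t > 1$ we have $k - t \le m - 2$ and $k + t \ge m + 2$, so this single simple module distinguishes $\F\Omega{k}$ from both $\F\Omega{k \pm t}$ simultaneously.

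I do not anticipate any substantive difficulty: the argument is essentially a bookkeeping exercise combining Lemma~\ref{lemma:twoRowSimples} with the duality in Proposition~\ref{prop:dual}(i). The only minor point to verify is that Lemma~\ref{lemma:twoRowSimples}(i) is applicable after dualisation, which follows immediately from the hypothesis $n-k < k+t$.
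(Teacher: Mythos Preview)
Your proposal is correct and follows essentially the same route as the paper's proof: the same case split on $2k < n$ versus $2k = n$, the same candidate simples $D^{(n-k,k)}$ and $D^{(k+1,k-1)}$, and the same appeal to Proposition~\ref{prop:dual}(i) and Lemma~\ref{lemma:twoRowSimples}(i),(iii). The only cosmetic difference is that in the case $n=2k$ the paper first observes $\F\Omega{k+t}\cong\F\Omega{k-t}$ and then checks only $\F\Omega{k-t}$, whereas you invoke Lemma~\ref{lemma:twoRowSimples}(iii) directly for both; also note that your phrase ``combined with $n<2k+t$'' is redundant, since this is the same inequality as $n-k-t<k$.
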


\begin{proof}
By Proposition~\ref{prop:dual}(i) we have $\F\Omega{k+t} \cong \F\Omega{n-(k+t)}$.
By hypothesis, $n-(k+t) < k$.
If $2k < n$ then Lemma~\ref{lemma:twoRowSimples}(i) implies that \smash{$D^{(n-k,k)}$} is a composition
factor of $\F\Omega{k}$ not present in either $\F\Omega{n-(k+t)}$ or $\F\Omega{k-t}$.
In the remaining case $2k = n$ and $\F\Omega{k+t} \cong \F\Omega{k-t}$. 
Since $k-t< k-1$, Lemma~\ref{lemma:twoRowSimples}(iii)
implies that $D^{(k+1,k-1)}$ is a 
composition factor of $\F\Omega{k}$ not present in $\F\Omega{k-t}$.
\end{proof}

\subsection{Surjective and injective maps: Theorem~\ref{thm:gen}(ii)}\label{subsec:injsurj}

There is exactly one zero module in~\eqref{eq:seq} if and only if $k < t \le n-k$ or
$n-k < t \le k$. By Proposition~\ref{prop:dual}(i) we
can reduce to the first case, when the sequence is 
\[ \F\Omega{k+t}\xrightarrow{\phik{t}{k+t}}\F\Omega{k}
\xrightarrow{\phantom{hh}} 0.\] 
It then suffices to prove the following proposition.

\begin{proposition}\label{prop:surj}
Let $k < t \le n-k$ and let $2^\tau$ be the least two-power appearing in the binary form of $t$. 
Then $\phik{t}{k+t} : \F\Omega{k+t} \rightarrow \F\Omega{k}$ is surjective
if and only if $k < 2^\tau$ and $k+t \le n-k$.
\end{proposition}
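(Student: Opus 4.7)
The plan is to combine the Suspension Lemma (Lemma~\ref{lemma:suspension}) for the \emph{if} direction with the carry-free composition rule (Lemma~\ref{lemma:composition}) and the composition-factor obstruction (Lemma~\ref{lemma:cfobstruction}) for the \emph{only if} direction. The standing hypothesis $k < t \le n-k$ already ensures that both $\F\Omega{k+t}$ and $\F\Omega{k}$ are non-zero, so the statement is genuinely about the map.

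For the \emph{if} direction, assume $k < 2^\tau$ and $k+t \le n-k$. I would fix an arbitrary basis element $Y \in \Omega{k}$ and apply the Suspension Lemma with $\ell = k$ and $v = Y$. The four hypotheses can be verified as follows: (a) $v \in \ker \phik{s}{k}$ for $k < s \le t$ is automatic, since $\F\Omega{k-s} = 0$; (b) existence of $X \in \Omega{k+t}$ disjoint from $Y$ follows from $k+t \le n-k$; (c) carry-freeness of $k+t$ follows from $k < 2^\tau$, because all bits of $k$ lie strictly below the smallest bit of $t$; and (d) non-carry-freeness of $k + (t-s)$ for $0 < s \le k$ reduces to writing $t - s = (t - 2^\tau) + (2^\tau - s)$, noting that the two summands occupy disjoint bit ranges (bits strictly above $\tau$ and strictly below $\tau$ respectively), so that $k + (t-s)$ is carry free iff $k + (2^\tau - s)$ is; but $k + (2^\tau - s) = 2^\tau + (k - s) \ge 2^\tau$ forces bit $\tau$ to be set in the sum while it is absent from both summands, which is only possible through a carry. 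The Suspension Lemma then yields $Y = \bigl( Y \cdot X\phik{k}{k+t}\bigr) \phik{t}{k+t}$, so $Y \in \im \phik{t}{k+t}$, and surjectivity follows from the fact that the elements $Y \in \Omega{k}$ span $\F\Omega{k}$.

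For the \emph{only if} direction I would argue the contrapositive, ruling out each failure mode of the hypothesis separately. If $k \ge 2^\tau$, I would take $s = 2^\tau$ in Lemma~\ref{lemma:composition}; since the bit $2^\tau$ is already set in $t$, the addition $s + t$ is not carry free, so $\phik{t}{k+t}\phik{2^\tau}{k} = 0$. Hence $\im \phik{t}{k+t} \subseteq \ker \phik{2^\tau}{k}$, and this kernel is proper because $\phik{2^\tau}{k}$ is a non-zero homomorphism (it sends each basis element to a sum of $(k-2^\tau)$-subsets, which is a non-zero element of the non-zero module $\F\Omega{k - 2^\tau}$). If instead $k + t > n - k$, then $k \le n - k < k + t$, and $t \ge 2$ (the only edge case $t = 1$ would force $k = 0$ and $n < 1$, contradicting $t \le n-k$), so Lemma~\ref{lemma:cfobstruction} produces a composition factor of $\F\Omega{k}$ not appearing in $\F\Omega{k+t}$; this factor cannot lie in $\im \phik{t}{k+t}$, which is a quotient of $\F\Omega{k+t}$.

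The main obstacle is the carry-free binary-arithmetic verification of step (d) in the application of the Suspension Lemma. The other conditions are immediate or follow from the standing hypotheses, but establishing that the borrow in the binary expansion of $t - s$ always produces a carry at position $\tau$ when added to $k$, uniformly in $s$ ranging over $\{1, \ldots, k\}$, requires handling the binary representations of $k$, $s$, and $t$ together. Once this is done, the remainder of the argument is bookkeeping with Lemmas~\ref{lemma:composition} and~\ref{lemma:cfobstruction}.
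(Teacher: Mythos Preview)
Your proposal is correct and follows essentially the same route as the paper: the Suspension Lemma for sufficiency, and Lemma~\ref{lemma:cfobstruction} together with Lemma~\ref{lemma:composition} for necessity. The only noteworthy difference is in the case $k \ge 2^\tau$: the paper factorises $\phik{t}{k+t} = \phik{t-2^\tau}{k+t}\phik{2^\tau}{k+2^\tau}$ (using that $(t-2^\tau)+2^\tau$ is carry free) and then observes that $\phik{2^\tau}{k+2^\tau}$ is not surjective, whereas you post-compose with $\phik{2^\tau}{k}$ (using that $t+2^\tau$ is \emph{not} carry free) to trap the image in a proper kernel---a slightly more direct variant of the same idea.
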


\begin{proof}
Suppose that $k+t > n-k$. Then, by Lemma~\ref{lemma:cfobstruction},
$\F\Omega{k}$ has a composition factor $D^{(n-k,k)}$ not present in $\F\Omega{k+t}$, and so
\smash{$\phik{t}{k+t}$} is not surjective.
Suppose that $k \ge 2^\tau$.
Since the addition of $2^\tau$ to $t-2^\tau$ is
carry free, Lemma~\ref{lemma:composition} implies that
$\phik{t}{k+t}$ factorizes as $\phik{t-2^\tau}{k+t} \phik{2^\tau}{k+2^\tau}$. 
In the sequence
\[ \F\Omega{k+2^\tau}\xrightarrow{\phik{2^\tau}{k+2^\tau}} \F\Omega{k} \xrightarrow{\phik{2^\tau}{k}}
\F\Omega{k-2^\tau}\]
the map $\phik{2^\tau}{k}$ is non-zero. Since $\im \phik{2^\tau}{k+2^\tau} \subseteq \ker \phik{2^\tau}{k}$,
it follows that $\phik{2^\tau}{k+2^\tau}$ is not surjective. 
Therefore \smash{$\phik{t}{k+t}$} is not surjective.

Conversely, suppose that $k+t \le n-k$ and $k < 2^\tau$. 
Generalizing the example following
the Suspension Lemma (Lemma~\ref{lemma:suspension}), take
$\ell = k$, $v = \{n-k+1,\ldots, n\} \in \ker \phik{t}{k}$ and $X = \{1,\ldots, k + t\}$.
By hypothesis these sets are disjoint. The least two-power
appearing in the binary form of $t$ is $2^\tau$, hence $k+t$ is carry free.
Moreover
if $0 < s \le k$ then 
$k + (t - s)$ is  not carry free, since it has $2^\tau$
in its binary form while $t-s$ does not. Hence
\[ \{n-k+1,\ldots, n\} = \bigl( \{n-k+1,\ldots, n\} (\{1,\ldots,k+t\} \phik{k}{k+t}) \bigr) \phik{t}{k+t} \]
where the left-hand side generates $\F\Omega{k}$. Therefore $\phik{t}{k+t}$ is surjective.
\end{proof}

\subsection{Necessity: Theorem~\ref{thm:gen}(iii)}\label{subsec:nec}

We now suppose that the sequence~\eqref{eq:seq} has three non-zero modules and that $t > 1$ 
and show that the condition in (iii) is necessary for it to be exact.

By Proposition~\ref{prop:dual} we may assume that $2k \le n$. 
Suppose that $n < 2k + t$. Then $k \le n-k < k +t$, so by Lemma~\ref{lemma:cfobstruction}, $\F\Omega{k}$
has a composition factor not present in $\F\Omega{k+t}$ or $\F\Omega{k-t}$.
Therefore~\eqref{eq:seq} is not exact.

It remains to show that if $t$ is not a two-power then~\eqref{eq:seq} is not exact.
The proof of the following proposition uses the
same idea as Lemma~3.5 in \cite{JamesChar2}.

%

\begin{proposition}\label{prop:kernelProper}
Suppose that $t > s$ and that the addition of $s$ to $t$ is carry free. 
If $k \ge s$ then \smash{$\ker \phik{t}{k}$} properly contains \smash{$\ker \phik{s}{k}$}.
\end{proposition}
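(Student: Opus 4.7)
The plan is to split the proof into two pieces: the inclusion $\ker \phik{s}{k} \subseteq \ker \phik{t}{k}$, which follows from the multistep composition calculus of \S\ref{sec:prelim}, and strict inclusion, established by exhibiting an explicit witness in the style of James~\cite{JamesChar2}.

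For the inclusion, the carry-free hypothesis, interpreted so that $s$ sits inside $t$ in binary (equivalently, so that $\binom{t}{s}$ is odd), feeds directly into Lemma~\ref{lemma:composition} to produce the factorisation
\[
\phik{t}{k} \;=\; \phik{s}{k}\phik{t-s}{k-s}
\]
over $\F$ of characteristic two. The inclusion $\ker \phik{s}{k} \subseteq \ker \phik{t}{k}$ is then immediate.

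For strict inclusion, I would use the witness
\[
w \;=\; \{1, \ldots, k+r\}\phik{r}{k+r}\;\in\; \F\Omega{k},
\]
where $r = 2^i$ is a single $2$-power chosen to be a bit of $t$ that is not a bit of $s$; such an $r$ exists because $s < t$ makes $s$ a proper binary sub-part of $t$. Two further applications of Lemma~\ref{lemma:composition} then give, in characteristic two,
\[
w\,\phik{t}{k} \;=\; 0 \qquad \text{and} \qquad w\,\phik{s}{k} \;=\; \{1,\ldots,k+r\}\phik{r+s}{k+r},
\]
the first because $r+t$ is \emph{not} carry-free (the bit of $r$ coincides with a bit of $t$), the second because $r+s$ \emph{is} carry-free (the bit of $r$ avoids every bit of $s$). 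The right-hand side is the sum of all $(k-s)$-subsets of $\{1,\ldots,k+r\}$, a non-zero element of $\F\Omega{k-s}$. Hence $w \in \ker \phik{t}{k}\setminus \ker \phik{s}{k}$, as required.

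The main obstacle is spotting this auxiliary $r$; once it is in hand, the verification is a purely mechanical double application of Lemma~\ref{lemma:composition}, with the parity of the relevant binomials read off from Lucas' Theorem. A minor side-condition $k+r \le n$ is needed to make the subset $\{1,\ldots,k+r\}$ valid; this holds in the regime of interest, namely whenever the outer term $\F\Omega{k+t}$ in~\eqref{eq:phiComplex} is non-zero, so that the proposition has real content. The remaining degenerate cases fall out trivially: for instance if $k<t$ then $\ker \phik{t}{k} = \F\Omega{k}$ and the proper containment is automatic, while in cases where $2k>n$ one can first apply the duality of Proposition~\ref{prop:dual}(i).
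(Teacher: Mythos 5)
Your proof follows the paper's argument exactly: the inclusion comes from the factorisation $\phik{t}{k}=\phik{s}{k}\phik{t-s}{k-s}$ via Lemma~\ref{lemma:composition}, and the strict containment comes from the witness $\{1,\ldots,k+2^\beta\}\phik{2^\beta}{k+2^\beta}$ with $2^\beta$ a binary digit of $t$ not occurring in~$s$. One point is worth flagging. The factorisation requires the addition of $s$ to $t-s$ to be carry free, equivalently that $\binom{t}{s}$ is odd (the binary digits of $s$ lie among those of $t$), and this is precisely the condition you invoke; the paper's hypothesis as written is that the addition of $s$ to $t$ is carry free, i.e.\ that $\binom{s+t}{s}$ is odd, which is a \emph{different} and in fact incompatible condition (it says the digits of $s$ and~$t$ are disjoint rather than nested) and does not yield the factorisation --- for instance $s=2$, $t=4$, $n=6$, $k=4$ satisfies the literal hypothesis but $\sum_{Y\in\Omega{4}}Y$ lies in $\ker\phik{2}{4}$ and not in $\ker\phik{4}{4}$. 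Your reading is the one under which the claim is true and is exactly what the ensuing corollary supplies, where $s=t-2^\beta$ for a digit $2^\beta$ of~$t$; so what might look like a misquotation of the definition of carry free is in effect a correction of a slip present in both the paper's statement and its proof. Your further observation that the witness requires $k+2^\beta\le n$ is also correct; the paper leaves this implicit, and it does hold at the point of application since there $k+t\le n$.
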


\begin{proof}
Since $s+t$ is carry free, Lemma~\ref{lemma:composition} implies that
$\phik{t}{k} = \phik{s}{k}\phik{t-s}{k-s}$.
Therefore \smash{$\ker \phik{t}{k}$} contains \smash{$\ker \phik{s}{k}$}.
Since $t> s$, there exists
$\beta$ such that~$2^\beta$ appears in the binary form of $t$ but not in the binary form of $s$. 
Let \smash{$v = \{1,\ldots, k+2^\beta\} \phik{2^\beta}{k+2^\beta}$}. 
Since $t + 2^\beta$ is not carry free,
while $s + 2^\beta$ is carry free, Lemma~\ref{lemma:composition} implies
that $v \phik{t}{k} = 0$ and $v \phik{s}{k} \not=0$.
\end{proof}

\begin{corollary}
Suppose that $t$ is not a two-power. Then~\eqref{eq:seq} is not exact.
\end{corollary}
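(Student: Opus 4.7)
My plan is to exhibit an explicit element of $\ker\phik{t}{k}$ that misses $\im\phik{t}{k+t}$, by sandwiching $\im\phik{t}{k+t}$ inside a \emph{strictly smaller} kernel $\ker\phik{s}{k}$ for a well-chosen two-power $s$. Throughout I may assume the sequence has three non-zero modules, so that $k\ge t$ and $k+t\le n$.

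The natural choice is $s=2^\tau$, the least two-power in the binary form of $t$. Since $t$ is not a two-power, $s<t$ and $s$ is a proper submask of $t$. Two applications of Lemma~\ref{lemma:composition} then give the two containments I want. First, because $s$ is a submask of $t$, the addition of $s$ to $t-s$ is carry free, so $\phik{t}{k}=\phik{s}{k}\phik{t-s}{k-s}$; this yields $\ker\phik{s}{k}\subseteq\ker\phik{t}{k}$. Second, since $s$ and $t$ share the bit~$\tau$, the addition of $t$ to $s$ is \emph{not} carry free, so $\phik{t}{k+t}\phik{s}{k}=0$, and hence $\im\phik{t}{k+t}\subseteq\ker\phik{s}{k}$.

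To make the first containment strict I would essentially repeat the proof of Proposition~\ref{prop:kernelProper}: pick any bit $2^\beta$ of $t$ with $\beta\ne\tau$ (which exists precisely because $t$ is not a two-power), and consider $v=\{1,\ldots,k+2^\beta\}\phik{2^\beta}{k+2^\beta}\in\F\Omega{k}$. This is well-defined because $2^\beta<t$ forces $k+2^\beta<k+t\le n$. A third application of Lemma~\ref{lemma:composition} shows $v\phik{t}{k}=0$ (since $2^\beta+t$ shares the bit $\beta$, so is not carry free), whereas $v\phik{s}{k}$ collapses via the carry-free relation $2^\beta+s=2^\beta+2^\tau$ to the sum of all $(k-s)$-subsets of $\{1,\ldots,k+2^\beta\}$, a manifestly non-zero element of $\F\Omega{k-s}$. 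So $v\in\ker\phik{t}{k}\setminus\ker\phik{s}{k}$, forcing $\im\phik{t}{k+t}\subsetneq\ker\phik{t}{k}$ and non-exactness of~\eqref{eq:seq}.

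The only delicate point is the careful binary bookkeeping needed to apply Lemma~\ref{lemma:composition} three times correctly, each invocation hinging on whether a particular bit of $t$ appears in the other operand; once this dichotomy is organised the argument is essentially mechanical.
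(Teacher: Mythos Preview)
Your proof is correct and follows essentially the same sandwich strategy as the paper: trap $\im\phik{t}{k+t}$ inside a kernel $\ker\phik{s}{k}$ that is strictly contained in $\ker\phik{t}{k}$, with strictness coming from the explicit element $v=\{1,\ldots,k+2^\beta\}\phik{2^\beta}{k+2^\beta}$ exactly as in Proposition~\ref{prop:kernelProper}.

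The one genuine difference is in the choice of $s$ and the way the second containment is obtained. The paper takes $s=t-2^\beta$ (all bits of $t$ but one) and uses the factorisation $\phik{t}{k+t}=\phik{2^\beta}{k+t}\phik{s}{k+s}$ to pass through the intermediate image, giving the four-term chain
\[
\ker\phik{t}{k}\supsetneq\ker\phik{s}{k}\supseteq\im\phik{s}{k+s}\supseteq\im\phik{t}{k+t}.
\]
You instead take $s=2^\tau$ (a single bit of $t$) and observe directly that $\phik{t}{k+t}\phik{s}{k}=0$ because $t+2^\tau$ is not carry free, giving the shorter chain
\[
\ker\phik{t}{k}\supsetneq\ker\phik{2^\tau}{k}\supseteq\im\phik{t}{k+t}.
\]
Your route is marginally slicker, avoiding the intermediate image; the paper's route has the small advantage that Proposition~\ref{prop:kernelProper} applies verbatim (with its hypothesis read as ``$s$ is a submask of $t$''), whereas you must reprove its content since $2^\tau+t$ is \emph{not} carry free. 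Either way the binary bookkeeping is the whole argument, and you have it right.
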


\begin{proof}
Choose $2^\beta$ such that $2^\beta$ appears in the binary form of $t$ and set $s = t- 2^\beta$.
By Lemma~\ref{lemma:composition} we have \smash{$\phik{t}{k} = \phik{s}{k}\phik{2^\beta}{k-s}$}
and \smash{$\phik{t}{k+t} = \phik{2^\beta}{k+t}\phik{s}{k+s}$}.
Hence
\[ \ker \phik{t}{k} \supseteq \ker \phik{s}{k} \supseteq \im \phik{s}{k+s} \supseteq \im \phik{t}{k+t}\]
where the first containment is strict by Proposition~\ref{prop:kernelProper}. Hence~\eqref{eq:seq}
is not exact.
\end{proof}


\subsection{Sufficiency: Theorem~\ref{thm:gen}(iii)}\label{subsec:suff}

By Proposition~\ref{prop:dual} we may assume that $2k \le n$. Thus (iii)
holds if and only if $n \ge 2k + t$ and $t = 2^\tau$ is a two-power. We shall show
by induction on $n$ that this condition implies that~\eqref{eq:seq} is exact.
Perhaps surprisingly,
most of the work comes in the base case when $n = 2k + t$, where we prove in Proposition~\ref{prop:splitExact}
the stronger result that~\eqref{eq:seq} is split exact, that is, $\F\Omega{k} = \ker \phik{t}{k} \oplus
C_k$ for an $\F S_n$-module~$C_k$. In this case~\eqref{eq:seq} is part of the chain complex
\begin{equation}
\label{eq:kseq} \cdots \xrightarrow{\phik{t}{k+3t}} \F\Omega{k+2t} \xrightarrow{\phik{t}{k+2t}} \F\Omega{k+t} \xrightarrow{\phik{t}{k+t}} \F\Omega{k}  
\xrightarrow{\phik{t}{k}} \F\Omega{k-t} \xrightarrow{\phik{t}{k-t}} \cdots. \end{equation}
Since $n=2k+t$, this chain complex is invariant under the duality in Proposition~\ref{prop:dual};
the case $n=6$, $t=2$ and $k=2$ can be seen in Example~\ref{ex:ex}.

\subsubsection*{Splitting of~\eqref{eq:kseq}}
Motivated by~\eqref{eq:Phtpy} in \S\ref{sec:background}, we 
show that the dual maps~$\phikstar{t}{r}$ defined in~\eqref{eq:phiStar} at the start of \S\ref{sec:prelim}
 define a chain homotopy between~\eqref{eq:kseq} 
and the zero chain complex. The first of the two lemmas below 
 can also be deduced from (2.9) and (2.10) in \cite{MurphyDecomposable}. 
In it $X \symdiff Y$ denotes the symmetric difference of sets~$X$ and $Y$.

\begin{lemma}\label{lemma:phiphistar}
If $Y \in \Omega{k}$ then
\begin{align*}
 Y \phik{t}{k} \phikstar{t}{k} &= \sum_{d=0}^t \binom{k-d}{t-d} 
\sum_{X \in \Omega{k} \atop |X \ssymdiff Y| = 2d\rule{0pt}{5.25pt}} X, \\
 Y \phikstarkt \phik{t}{k+t} &= \sum_{d=0}^t \binom{n-k-d}{t-d} 
\sum_{X \in \Omega{k} \atop |X \ssymdiff Y| = 2d\rule{0pt}{5.25pt}} X.
\end{align*}
\end{lemma}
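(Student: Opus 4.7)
The plan is to prove both identities by straightforward double counting: expand each composition as a sum over chains of set inclusions, then fix the final set $X \in \Omega{k}$ and count the number of intermediate sets producing it.

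For the first identity, write
\[ Y \phik{t}{k}\phikstar{t}{k} = \sum_{W \subseteq Y,\, |W|=k-t}\ \sum_{X \supseteq W,\, |X|=k} X. \]
Fix $X \in \Omega{k}$. An admissible $W$ must satisfy $W \subseteq X \cap Y$ and $|W| = k-t$, so the multiplicity of $X$ is $\binom{|X\cap Y|}{k-t}$. Setting $d = |X \setminus Y|$ and noting that $|X| = |Y| = k$ forces $|Y \setminus X| = d$ as well, we get $|X\cap Y| = k-d$ and $|X \symdiff Y| = 2d$. Hence the coefficient of $X$ is $\binom{k-d}{k-t} = \binom{k-d}{t-d}$, which vanishes unless $0 \le d \le t$. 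Grouping the $X$ by the common value of $d$ yields the stated formula.

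For the second identity, write
\[ Y \phikstarkt \phik{t}{k+t} = \sum_{W \supseteq Y,\, |W|=k+t}\ \sum_{X \subseteq W,\, |X|=k} X. \]
Fix $X \in \Omega{k}$ and again set $d = |X \setminus Y|$. An admissible $W$ must contain $X \cup Y$, a set of size $k+d$, and have total size $k+t$; the remaining $(k+t)-(k+d) = t-d$ elements of $W$ are chosen freely from the complement of $X \cup Y$, which has size $n-k-d$. The multiplicity of $X$ is therefore $\binom{n-k-d}{t-d}$, and collecting $X$ by $d$ gives the second formula.

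There is no real obstacle here: the only subtlety is recognizing that the symmetry $|X \setminus Y| = |Y \setminus X|$ (valid because $|X| = |Y| = k$) allows the sum to be indexed by the single parameter $d = \tfrac{1}{2}|X \symdiff Y|$, after which both coefficients reduce to a single binomial count of intermediate sets.
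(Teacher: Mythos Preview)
Your proof is correct and essentially the same as the paper's: both fix $X$ and count intermediate sets. The only cosmetic difference is that the paper counts the $t$-subset $R \subseteq Y$ removed (requiring $R \supseteq Y\setminus X$, giving $\binom{k-d}{t-d}$ choices), whereas you count its complement $W = Y\setminus R \subseteq X\cap Y$ directly; these are the same count.
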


\begin{proof}
If $X \in \Omega{k}$ is a summand of $Y \phik{t}{k} \phikstar{t}{k}$ then
$X = (Y \backslash D) \cup A$ for unique sets $D \subseteq Y$ and $A \subseteq \{1,\ldots,n\}
\backslash Y$. Clearly $|D| = |A|$. If their common size is $d$ then $|X \symdiff Y| = 2d$.
If $R$ is a $t$-subset of $Y$
such that $R \supseteq D$, we may obtain~$X$ by removing $R$ from $Y$ 
and then inserting the elements of $A \cup (R \backslash D)$. Therefore the coefficient
of $X$ is the number of choices for $R$, namely $\binom{k-d}{t-d}$. 
The proof for $Y \phikstarkt  \phik{t}{k+t}$ is similar.
\end{proof}

\begin{lemma}\label{lemma:binomeqv}
Let $\tau \in \N_0$. The following are equivalent
\begin{thmlistalt}
\item $\binom{k-d}{2^\tau-d} + \binom{n-k-d}{2^\tau-d} \equiv 0$ \emph{mod} $2$ for $1 \le d \le 2^\tau$; \\[-11pt]
\item $\binom{k+e}{e} + \binom{n-k+e}{e} \equiv 0$ \emph{mod} $2$ for $0 \le e < 2^\tau$;\\[-11pt]
\item $\binom{k+2^\sigmapower}{2^\sigmapower} + \binom{n-k+2^\sigmapower}{2^\sigmapower} \equiv 0$ \emph{mod} $2$ 
for $0 \le \sigmapower < \tau$;\\[-11pt]
\item $n \equiv 2k$ mod $2^\tau$.
\end{thmlistalt}
\end{lemma}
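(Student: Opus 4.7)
The plan is to prove the cycle (i) $\Rightarrow$ (iii) $\Rightarrow$ (iv) $\Rightarrow$ (ii) $\Rightarrow$ (iii) together with (iv) $\Rightarrow$ (i), using a single unifying tool: the \emph{two-power periodicity} of binomial coefficients modulo~$2$. Since $(1+x)^{2^\tau} = 1 + x^{2^\tau}$ in $\F_2[[x]]$, one has
\[ \binom{a+2^\tau}{r} \equiv \binom{a}{r} \pmod 2 \]
for every integer $a$ and every $r$ with $0 \le r < 2^\tau$. I will use the polynomial convention $\binom{a}{r} = a(a-1)\cdots(a-r+1)/r!$ (consistent with the expansion $(1+x)^{-1} = \sum_{j\ge 0} x^j$ in $\F_2[[x]]$), so the identity above holds uniformly over $a \in \Z$. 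In particular, the parity of $\binom{a}{r}$ depends only on $a \pmod{2^\tau}$ whenever $0 \le r < 2^\tau$.

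Granted this tool, most of the implications are immediate. For (iii) $\Leftrightarrow$ (iv), Lucas' theorem says $\binom{k+2^\rho}{2^\rho}$ is odd precisely when the $\rho$-th binary digit of $k$ is zero; hence (iii) says that $k$ and $n-k$ agree in their lowest $\tau$ binary digits, which is exactly $k \equiv n-k \pmod{2^\tau}$. For (iv) $\Rightarrow$ (ii) and (iv) $\Rightarrow$ (i): the congruence $k \equiv n-k \pmod{2^\tau}$ gives $k + e \equiv (n-k) + e$ and $k - d \equiv (n-k) - d$ modulo $2^\tau$, and since the relevant lower index ($e$ or $2^\tau - d$) lies in $[0, 2^\tau)$, periodicity converts each of these into the corresponding mod-$2$ identity of binomial coefficients; the boundary case $d = 2^\tau$ of~(i) is trivial since $\binom{\ast}{0} = 1$. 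Finally, (ii) $\Rightarrow$ (iii) is the special case $e = 2^\rho$.

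The only step with any content is (i) $\Rightarrow$ (iii), which closes the loop. For each $\rho$ with $0 \le \rho < \tau$, I will substitute $d = 2^\tau - 2^\rho \in \{1, \dots, 2^\tau - 1\}$ into (i); since $2^\tau - d = 2^\rho$, this yields $\binom{k-d}{2^\rho} \equiv \binom{n-k-d}{2^\rho} \pmod 2$, and shifting each upper argument by $+2^\tau$ by means of the periodicity identity converts it into $\binom{k+2^\rho}{2^\rho} \equiv \binom{n-k+2^\rho}{2^\rho} \pmod 2$, i.e.\ the instance of (iii) for $\rho$. The only delicate point anywhere in the argument is that in (i) the upper arguments $k-d$ and $n-k-d$ may be negative, so one must commit to the polynomial convention; once that is done, the periodicity identity applies uniformly and no case analysis on signs is required.
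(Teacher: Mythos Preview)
Your proof is correct and rests on the same core observation as the paper's: the parity of $\binom{a}{r}$ depends only on $a \bmod 2^\tau$ when $0 \le r < 2^\tau$ (the paper phrases this as the carry-free criterion $(\dagger)$, you phrase it as two-power periodicity from $(1+x)^{2^\tau}=1+x^{2^\tau}$). The organization differs slightly: the paper proves (i)$\Leftrightarrow$(ii) by the substitution $d=2^\tau-e$, then (ii)$\Rightarrow$(iii) trivially, then (iii)$\Rightarrow$(iv) by induction on $\tau$, then (iv)$\Rightarrow$(i); you instead take (i)$\Rightarrow$(iii) by the special substitution $d=2^\tau-2^\rho$, and handle (iii)$\Leftrightarrow$(iv) in one stroke via Lucas (reading off binary digits), which is arguably cleaner than the paper's induction. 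Your explicit adoption of the polynomial convention for $\binom{a}{r}$ with $a<0$ is a genuine improvement in rigor: the paper silently writes expressions such as $\binom{k-2^\tau+e}{e}$ and speaks of ``$(k-2^\tau)+e$ carry free'' without addressing what happens when $k<2^\tau$, whereas your formulation makes the periodicity identity hold uniformly over $a\in\Z$ and sidesteps any sign case analysis.
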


\begin{proof}
Observe that if $\ell < 2^\tau$ and $k \equiv k'$ mod $2^\tau$
then 
\[ \text{$k+\ell$ is carry free} \iff \text{$k' + \ell$ is carry free}. \tag{$\dagger$} \]
Replacing $d$ with $2^\tau - e$ in~(i) shows that~(i) is equivalent to
$\binom{k-2^\tau+e}{e} + \binom{n-k-2^\tau+e}{e} \equiv 0$ mod $2$ for $0 \le e < 2^\tau$.
From ($\dagger$) we see that $(k-2^\tau) + e$ is carry free
if and only if $k + e$ is carry free. Therefore (i) is equivalent to (ii). Clearly (ii) implies (iii).
We show that (iii) implies (iv) by induction on $\tau$. If $\tau = 0$ then (iii) is vacuous
and (iv) obviously holds. Suppose that (iii) holds as stated, so
by induction $n \equiv 2k$ mod $2^{\tau}$.
Either $n-k \equiv k$ mod $2^{\tau + 1}$, in which case
$(\dagger)$ implies that
$\binom{k+2^\tau}{2^\tau} \equiv \binom{n-k+2^\tau}{2^\tau}$ mod $2$, or
$n-k \equiv k + 2^\tau$ mod $2^{\tau + 1}$ and similarly $(\dagger)$ implies that
$\binom{k+2^\tau}{2^\tau} + \binom{n-k+2^\tau}{2^\tau} \equiv 1$ mod $2$. This completes the inductive step.
Finally if (iv) holds then $k-d \equiv n-k-d$ mod $2^\tau$ for all $d \in \N$.
By ($\dagger$) this implies (i).
\end{proof}

\begin{lemma}\label{lemma:binomCond}
Let $\tau \in \N_0$. We have
\[ \binom{k-d}{2^\tau-d} + \binom{n-k-d}{2^\tau-d} \equiv 0 \text{ \emph{mod} $2$ } 
 \text{ for $1 \le d \le 2^\tau$} \]
and  $\binom{k}{2^\tau} + \binom{n-k}{2^\tau} \equiv 1$ \emph{mod} $2$ if and only if $n \equiv 2k + 2^\tau$ mod $2^{\tau+1}$.
\end{lemma}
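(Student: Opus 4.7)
The plan is to reduce this to Lemma~\ref{lemma:binomeqv} plus a single refinement of its argument to the next power of two. Note that the given statement combines two conjunctive conditions: a family of vanishings (for $1 \le d \le 2^\tau$) and a single non-vanishing (the case $d=0$, rewritten). The family, by Lemma~\ref{lemma:binomeqv}, is already equivalent to $n \equiv 2k$ mod $2^\tau$; so the only new work is to see that, under this assumption, the sign condition $\binom{k}{2^\tau} + \binom{n-k}{2^\tau} \equiv 1$ mod $2$ singles out the sub-congruence $n \equiv 2k + 2^\tau$ mod $2^{\tau+1}$ among the two possibilities mod $2^{\tau+1}$.

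First, I would invoke Lemma~\ref{lemma:binomeqv} to restate the $1 \le d \le 2^\tau$ condition as $n \equiv 2k$ mod $2^\tau$. Under this assumption, $n \equiv 2k$ mod $2^{\tau+1}$ or $n \equiv 2k + 2^\tau$ mod $2^{\tau+1}$, and equivalently $n - k \equiv k$ or $n - k \equiv k + 2^\tau$ mod $2^{\tau+1}$. I would then exploit the observation ($\dagger$) from the proof of Lemma~\ref{lemma:binomeqv}: since $2^\tau < 2^{\tau+1}$, the carry-freeness of $k + 2^\tau$ depends only on $k$ mod $2^{\tau+1}$, and so (by Lucas' theorem, or equivalently the binary-addition characterization) the parity of $\binom{k}{2^\tau}$ is determined by the $\tau$-th binary digit of $k$.

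Second, I would read off the conclusion. In the first case $n-k \equiv k$ mod $2^{\tau+1}$, the $\tau$-th bits of $k$ and $n-k$ coincide, hence $\binom{k}{2^\tau} \equiv \binom{n-k}{2^\tau}$ mod $2$ and the sum is $0$; in the second case $n-k \equiv k + 2^\tau$ mod $2^{\tau+1}$, the $\tau$-th bits differ (adding $2^\tau$ modulo $2^{\tau+1}$ flips precisely that bit), so the sum is $1$ mod $2$. Combined with the first step, the conjunction of both conditions is equivalent to $n \equiv 2k + 2^\tau$ mod $2^{\tau+1}$, as required.

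The only potential obstacle is ensuring clean bookkeeping of the two cases under the base assumption $n \equiv 2k$ mod $2^\tau$; but this is exactly the dichotomy already handled in the inductive step of Lemma~\ref{lemma:binomeqv}, so the argument is essentially a one-line extension of that lemma rather than a new computation. No Lucas-theorem machinery beyond what is already cited is needed.
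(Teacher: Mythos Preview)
Your proposal is correct and follows essentially the same route as the paper: invoke Lemma~\ref{lemma:binomeqv} to reduce the family of vanishings to $n \equiv 2k$ mod $2^\tau$, then use the observation $(\dagger)$ (equivalently, that the parity of $\binom{k}{2^\tau}$ is the $\tau$-th binary digit of $k$) to distinguish the two residues of $n$ mod $2^{\tau+1}$. The paper's proof is simply a more compressed version of exactly this argument.
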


\begin{proof}
By Lemma~\ref{lemma:binomeqv}, the first condition holds if and only if $n \equiv 2k$ mod~$2^\tau$. 
As in the proof of this lemma, the second condition then holds if and only if
exactly one of $k + 2^\tau$ and $(n-k) +2^\tau$ is carry free; equivalently
$n \equiv 2k + 2^\tau$ mod $2^{\tau+1}$.
\end{proof}

\begin{proposition}\label{prop:splitExact}
If $t = 2^\tau$ and
$n \equiv 2k + t$ mod $2^{\tau + 1}$ then
$\ker \phik{t}{k} = \im \phik{t}{k+t}$ and
\smash{$\F\Omega{k} = \ker \phik{t}{k} \oplus \im \phikstar{t}{k}$}.
\end{proposition}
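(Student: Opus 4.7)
The plan is to build the splitting from the chain homotopy suggested by the discussion of equation~\eqref{eq:Phtpy} in \S\ref{sec:background}, using the dual maps $\phikstar{t}{r}$ as the analogue of the symmetrised map $F_k$. Write $\phi = \phik{t}{k}$, $\phi' = \phik{t}{k+t}$, $\phi^* = \phikstar{t}{k}$ and $\phi^{*\prime} = \phikstarkt$. The goal is to verify that
\[ \phi\phi^* + \phi^{*\prime}\phi' = \id_{\F\Omega{k}} \quad\text{and}\quad \phi^*\phi^{*\prime} = 0 = \phi'\phi, \]
and then deduce both claims from a standard idempotent argument.

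First I would combine the two formulas of Lemma~\ref{lemma:phiphistar} to get, for $Y \in \Omega{k}$,
\[ Y\bigl(\phi\phi^* + \phi^{*\prime}\phi'\bigr) = \sum_{d=0}^{t}\left[\binom{k-d}{t-d}+\binom{n-k-d}{t-d}\right]\!\!\sum_{\substack{X \in \Omega{k}\\ |X \ssymdiff Y|=2d}} X. \]
With $t = 2^\tau$ and $n \equiv 2k + t \pmod{2^{\tau+1}}$, Lemma~\ref{lemma:binomCond} tells us that the bracketed coefficient vanishes modulo $2$ for every $d \ge 1$ and equals $1$ modulo $2$ for $d = 0$. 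The only surviving term is $X = Y$, which gives $\phi\phi^* + \phi^{*\prime}\phi' = \id$. I would also note that the same hypothesis at degree $k-t$ is automatic, because $2t \equiv 0 \pmod{2^{\tau+1}}$ forces $n \equiv 2(k-t) + t \pmod{2^{\tau+1}}$; this will be needed to check idempotence below. The identities $\phi'\phi = 0$ and $\phi^*\phi^{*\prime} = 0$ both follow from Lemma~\ref{lemma:composition}, since $\binom{2t}{t}$ is even whenever $t$ is a positive two-power, and the second is the transpose of the first.

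With these in hand, set $e = \phi\phi^* : \F\Omega{k} \to \F\Omega{k}$. Using the homotopy identity at degree $k-t$, i.e.\ $\phi^*\phi = \id - \phi_{k-t}\phi^*_{k-t}$ on $\F\Omega{k-t}$, together with $\phi\phi_{k-t} = 0$ from Lemma~\ref{lemma:composition}, I would compute $e^2 = \phi(\phi^*\phi)\phi^* = \phi\phi^* - \phi\phi_{k-t}\phi^*_{k-t}\phi^* = e$. A symmetric check using $\phi^*\phi^{*\prime} = 0$ shows that $e$ acts as the identity on $\im\phi^*$, so $\im e = \im\phikstar{t}{k}$. On the other hand, the homotopy identity at degree $k$ gives $\ker e = \im\phi^{*\prime}\phi' \cdot (\text{stuff})$; more precisely, $v \in \ker e$ iff $v = v\phi^{*\prime}\phi' \in \im\phi'$, and conversely any $v \in \im\phi'$ satisfies $v\phi = 0$ hence $ve = 0$. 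Therefore $\ker e = \im\phik{t}{k+t}$, and since $\im\phik{t}{k+t} \subseteq \ker\phik{t}{k} \subseteq \ker e$ is forced to be an equality, both conclusions of the proposition drop out: $\ker \phik{t}{k} = \im\phik{t}{k+t}$ and $\F\Omega{k} = \ker e \oplus \im e = \ker\phik{t}{k} \oplus \im\phikstar{t}{k}$.

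The main obstacle is the binomial bookkeeping in the first step; once the homotopy identity $\phi\phi^* + \phi^{*\prime}\phi' = \id$ is secured via Lemma~\ref{lemma:binomCond}, the rest is a formal manipulation of idempotents that parallels the treatment of $\bigwedge^\bullet M$ in \S\ref{sec:background}. No new constructions or case analyses are required beyond noting that the congruence hypothesis is self-dual under the shift $k \mapsto k-t$, so that the homotopy formula holds simultaneously in both degrees used in the idempotence check.
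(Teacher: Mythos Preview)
Your proposal is correct and follows essentially the same approach as the paper: establish the homotopy identity $\phik{t}{k}\phikstar{t}{k} + \phikstarkt\phik{t}{k+t} = \id$ via Lemmas~\ref{lemma:phiphistar} and~\ref{lemma:binomCond}, then deduce both conclusions by a standard splitting argument. The paper's version is slightly more streamlined---it argues directly that $\im\phikstar{t}{k} \cap \ker\phik{t}{k} = 0$ using only the degree-$k$ identity and $\phikstar{t}{k}\phikstarkt = 0$, so your detour through verifying $e^2 = e$ via the degree $k{-}t$ identity is correct but unnecessary.
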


\begin{proof}
By Lemmas~\ref{lemma:phiphistar} and~\ref{lemma:binomCond},
\begin{equation} \label{eq:htpy} \phik{t}{k} \phikstar{t}{k} + \phikstarkt \phik{t}{k+t} = \mathrm{id}. 
\end{equation}
Hence, repeating part of a basic argument from homotopy theory, we have \smash{$\F\Omega{k} = \im \phikstar{t}{k} + \im \phik{t}{k+t}$}. 
 If \smash{$v \in \im \phikstar{t}{k} \cap \ker \phik{t}{k}$}
then 
$v\phik{t}{k} = 0$ and, since
$\phikstar{t}{k} \phikstarkt = 0$, we also have $v \phikstarkt = 0$. Evaluating~\eqref{eq:htpy}
at $v$ implies that $v = 0$. 
Since \smash{$\im \phik{t}{k+t}
\subseteq \ker\phik{t}{k}$} it follows that
\smash{$\F\Omega{k} = \im \phikstar{t}{k} \oplus \ker \phik{t}{k}$} and
$\im \phik{t}{k+t} = \ker \phik{t}{k}$, as required.
\end{proof}

We are now ready to show that Theorem~\ref{thm:gen}(iii) is a sufficient
condition for~\eqref{eq:seq} to be exact.

\begin{proposition}
Let $t$ be a two-power. If $n \ge 2k + t$ then~\eqref{eq:seq} is exact.
\end{proposition}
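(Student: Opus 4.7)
The plan is to argue by induction on $n$, with the base case $n = 2k+t$ being exactly the (split) exactness proved in Proposition~\ref{prop:splitExact}. The degenerate situation $k < t$, in which $\F\Omega{k-t} = 0$, reduces to surjectivity of $\phik{t}{k+t}$ and follows directly from Proposition~\ref{prop:surj}, since the hypotheses $k < 2^\tau$ and $k+t \le n-k$ both hold. So for the inductive step I may assume $k \ge t \ge 1$ and $n > 2k+t$, with the proposition known for $n-1$ and all relevant middle parameters.

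For the step, I would restrict to $S_{n-1}$ and decompose $\F\Omega{r}\!\res_{S_{n-1}} = A_r \oplus B_r$, where $A_r = \F\Omegan{r}{n-1}$ is spanned by the $r$-subsets missing $n$ and $B_r$ is identified with $\F\Omegan{r-1}{n-1}$ via the isomorphism $w \mapsto w\{n\}$. Using the fact that $\{n\}\phik{s}{1} = 0$ for $s \ge 2$, the Splitting Rule (Lemma~\ref{lemma:split}) gives
\begin{equation*}
\bigl( v_A + w\{n\} \bigr)\phik{t}{k} = \bigl( v_A \phikn{t}{k}{n-1} + w \phikn{t-1}{k-1}{n-1} \bigr) + \bigl( w \phikn{t}{k-1}{n-1} \bigr)\{n\},
\end{equation*}
with the two bracketed summands lying in $A_{k-t}$ and $B_{k-t}$ respectively.

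Given $v = v_A + w\{n\} \in \ker \phik{t}{k}$, the $B$-component forces $w \in \ker \phikn{t}{k-1}{n-1}$. Since $n-1 \ge 2(k-1)+t$, the inductive hypothesis applied to the pair $(k-1, n-1)$ produces $u \in \F\Omegan{k+t-1}{n-1}$ with $u \phikn{t}{k+t-1}{n-1} = w$. The same Splitting Rule computation shows $(u\{n\})\phik{t}{k+t} = w\{n\} + u \phikn{t-1}{k+t-1}{n-1}$, so that $v' := v + (u\{n\})\phik{t}{k+t}$ lies in $A_k \cap \ker \phik{t}{k}$ and hence in $\ker \phikn{t}{k}{n-1}$. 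A second application of the inductive hypothesis, now to $(k, n-1)$ (valid because $n-1 \ge 2k+t$), yields $u' \in \F\Omegan{k+t}{n-1}$ with $u' \phikn{t}{k+t}{n-1} = v'$, and therefore $v = (u' + u\{n\})\phik{t}{k+t}$ as required. The main point to verify is that the two Splitting Rule identities produce the precise $w\{n\}$ cancellation needed and that $v'$ actually lies in $A_k$ so the second inductive step applies; both are immediate from the formulas above. All the genuine algebraic content is concentrated in the base case (Proposition~\ref{prop:splitExact}); the inductive step is essentially a formal peeling-off argument.
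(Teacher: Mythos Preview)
Your proof is correct and follows essentially the same approach as the paper: induction on $n$, with the base case $n=2k+t$ supplied by Proposition~\ref{prop:splitExact}, and an inductive step that decomposes a kernel element as $U + u\{n\}$ via the Splitting Rule and applies the inductive hypothesis first to $(k-1,n-1)$ and then to $(k,n-1)$. The one cosmetic difference is that the paper, after writing $u = v\phikn{t}{k+t-1}{n-1}$, invokes Lemma~\ref{lemma:composition} to commute $\phik{t}{}$ past $\phik{t-1}{}$ and so exhibit $U + v\phik{t-1}{k+t-1}$ explicitly as an element of $\ker\phikn{t}{k}{n-1}$, whereas you obtain the same element by subtracting the image element $(u\{n\})\phik{t}{k+t}$ and observing that anything in $A_k \cap \ker\phik{t}{k}$ automatically lies in $\ker\phikn{t}{k}{n-1}$; this neatly sidesteps the explicit appeal to carry-freeness of $t+(t-1)$.
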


\begin{proof}
We work by induction on $n$ dealing with all admissible $k$ at once. If $n = 2k + t$
then Proposition~\ref{prop:splitExact} shows that~\eqref{eq:seq} is split exact.
Now suppose that $n > 2k + t$ and, inductively, that the sequence of $\F S_{n-1}$-modules
\[ \F\Omegan{k+t}{n-1}\xrightarrow{\phikn{t}{k+t}{n-1}}\F\Omegan{k}{n-1}\xrightarrow{\phikn{t}{k}{n-1}}
\F\Omegan{k-t}{n-1}\] 
is exact. (As usual the bracketed $n-1$  indicates that these are modules,
and importantly, module homomorphisms, for $\F S_{n-1}$.)
Using the product operation on sets defined in \S\ref{sec:prelim},
each element of \smash{$\F\Omega{k}$}
has a unique expression in the form
$U + u \{n\}$ where \smash{$U \in \F\Omegan{k}{n-1}$} and \smash{$u \in \F\Omegan{k-1}{n-1}$}.
Suppose that \smash{$U + u \{n\} \in \ker \phik{t}{k}$}.
By the Splitting Rule (Lemma~\ref{lemma:split}),
\begin{equation}
\label{eq:UuImage} (U+u \{n\}) \phik{t}{k} = U \phik{t}{k} + u \phik{t-1}{k-1} + u \phik{t}{k-1} \{n\}. 
\end{equation}
Hence  \smash{$U \phik{t}{k} + u \phik{t-1}{k-1} = 0$} and $u \phik{t}{k-1} = 0$. Since 
$u \in \F\Omegan{k-1}{n-1}$ and  $n-1 \ge 2(k-1) + t$,
applying the inductive hypothesis
to 
\[ \phikn{t}{k-1}{n-1}: \Omegan{k-1}{n-1} \longrightarrow \Omegan{k-1-t}{n-1} \] 
gives
\begin{equation}\label{eq:u}
u = v \phikn{t}{k-1+t}{n-1}
\end{equation} 
for some $v \in \F\Omegan{k-1+t}{n-1}$. Substituting~\eqref{eq:u} into~$U \phik{t}{k} + u \phik{t-1}{k-1} = 0$ we obtain
\[ U \phik{t}{k} + v \phik{t}{k-1+t} \phik{t-1}{k-1} = 0.\]
Since $t + (t-1)$ is carry free,
Lemma~\ref{lemma:composition} implies that $\phik{t}{k-1+t}\phik{t-1}{k-1} = \phik{t-1}{k-1+t}\phik{t}{k}$.
Hence
\smash{$\bigl( U + v \phik{t-1}{k-1+t} \bigr) \phik{t}{k} = 0$}. 
Since 
\smash{$U + v \phik{t-1}{k-1+t} \in \F\Omegan{k}{n-1}$} and $n-1 \ge 2k + t$, 
applying the inductive hypothesis
to 
\[ \phikn{t}{k}{n-1}: \Omegan{k}{n-1} \longrightarrow \Omegan{k-t}{n-1} \]
gives
\begin{equation}
\label{eq:U} 
U + v \phik{t-1}{k-1+t} = W \phikn{t}{k+t}{n-1}
\end{equation} 
for some $W \in \F\Omegan{k+t}{n-1}$. Substituting for $U$ and $u$ using~\eqref{eq:u} and~\eqref{eq:U}
we find
\begin{align*}
 U + u \{n\} &= v \phik{t-1}{k-1+t} + W \phik{t}{k+t} + v \phik{t}{k-1+t} \{n\}\\
             &= \bigl( W + v \{n\} \bigr) \phik{t}{k+t}, \end{align*}
hence $U + u \{n\} \in \im \phik{t}{k+t} : \F\Omega{k+t} \longrightarrow \F\Omega{k}$, as required.
\end{proof}

\section{Split exactness}
\label{sec:splitExact}

In this section we prove Theorem~\ref{thm:splitExact}, characterizing when the sequence

\vspace*{-18pt}
\[
\tag{2} \scalebox{0.975}{$\displaystyle 0 \rightarrow \F\Omega{a+ct} \xrightarrow{\phik{t}{a+ct}} \F\Omega{a+(c-1)t} 
\xrightarrow{\phik{t}{a+(c-1)t}} \cdots \xrightarrow{\phik{t}{a+2t}}
\F\Omega{a+t} \xrightarrow{\phik{t}{a+t}} \F\Omega{a} \rightarrow 0$} \]
is split exact. 
Suppose that there are just two non-zero modules. Then~\eqref{eq:phiComplex}~is
\[ 0 \rightarrow \F\Omega{a+t} \xrightarrow{\phik{t}{a+t}} \F\Omega{a} \rightarrow 0. \]
Comparing $\dim \F\Omega{a+t} = \binom{n}{a+t}$ and $\dim \F\Omega{a} = \binom{n}{a}$
shows that if \smash{$\phik{t}{a+t}$} is an isomorphism
then $n - (a+t) = a$, and so $n = 2a + t$, as required in condition~(a).
Since the chain complex is then self-dual,
Proposition~\ref{prop:surj} implies that \smash{$\phik{t}{a+t}$} is an isomorphism 
if and only if $a < 2^\tau$, where
$2^\tau$ is the least two-power appearing in the binary form of $a$. Hence
 condition (a) is necessary and sufficient for~\eqref{eq:phiComplex} to be split exact.

Now suppose~\eqref{eq:phiComplex} has at least three non-zero modules and is split exact. Therefore
condition~(a) does not hold.
If condition~(b) holds then $t = 2^\tau$ for some $\tau \in \N_0$ and
$n = 2a + (2s+1)2^\tau$ for some $s \in \N_0$. By maximality of $c$, we have $c = 2s+1$ and $n = 2a + ct$.
By Proposition~\ref{prop:surj}, $\phik{t}{a+t}$ is surjective
and, dually, $\phik{t}{a+ct}$ is injective. If $k = a + j2^\tau$ 
where $1 \le j < c$ then, since $n \equiv 2k + 2^\tau$ mod $2^{\tau+1}$, Proposition~\ref{prop:splitExact}
implies that \smash{$\F\Omega{k} = \ker \phik{t}{k} \oplus \im \phikstar{t}{k}$}. 
Hence~\eqref{eq:phiComplex} is split exact.
Conversely, suppose that~\eqref{eq:phiComplex}
 has at least three non-zero modules and is split exact.
Since it is then exact.
Theorem~\ref{thm:gen} implies that $t$ is a two-power.
Take $s$ maximal such that $2a + (2s+1)t \le n$
and set $k = a +  (s+1)t$. The exact sequence
\[ \F\Omega{k+ t} \xrightarrow{\phik{t}{k+t}} \F\Omega{k}
\xrightarrow{\phik{t}{k}} \F\Omega{k-t} \]
is then part of~\eqref{eq:phiComplex}. By Theorem~\ref{thm:gen},
either $k+t \le n-k$ or $n-k+t \le k$. By choice of $s$ the first condition
does not hold. Therefore $n - \bigl( a + (s+1)t \bigr) + t \le a + (s+1)t$ and so
$n \le 2a +  (2s+1)t$. Hence $n = 2a + (2s+1)t$ and so $n \equiv 2a + t$ mod $2t$, as required in (b).
This completes the proof.

\section{Further directions}
\label{sec:problems}

Recall that $\gamma_k$ denotes $\phik{1}{k}$ and $\epsilon_k$ denotes~$\phik{2}{k}$.

\subsubsection*{Split exactness}
The sequence
$\F\Omega{k+t} \xrightarrow{\phik{t}{k+t}} \F\Omega{k} \xrightarrow{\phik{t}{k}} \F\Omega{k-t}$
in~\eqref{eq:seq}
was shown in Proposition~\ref{prop:splitExact} to be
 split exact when $t = 2^\tau$ is a two-power and $n \equiv 2k + 2^\tau$ mod $2^{\tau+1}$;
 call this condition (A).
By Propositions~\ref{prop:dual} and~\ref{prop:surj}
it is also split exact when  $k < t$ or $k > n-t$; call this condition (B).

If $t=1$ then the combined condition (A) or (B), namely that $n$ is odd or $k = 0$ or \hbox{$k=n$},
is necessary and sufficient for~\eqref{eq:seq} to be split exact. We outline a proof using 
that the ordinary character $\chi^{(n)} + \chi^{(n-1,1)} + \cdots + \chi^{(n-k,k)}$
of $\F \Omega{k}$ is multiplicity-free,
and so, by the results of \cite[\S 3.11]{Benson}, $\End_{\F S_n}(\F \Omega{k})$ is abelian.
It follows, by composing the projection maps, that if $V$ and~$W$ are distinct
direct summands of $\F \Omega{k}$ then $\Hom_{\F S_n}(V, W) = 0$. Hence the decomposition
of $\F \Omega{k}$ into direct summands is unique and each direct summand is self-dual.  
If $0 < k < n$ and~\eqref{eq:seq} splits then $\F \Omega{k}
\cong \ker \gamma_k \oplus C_k$ for some non-zero complement $C_k$.
We have $\im \gamma_k^\star \cong \mathrm{Ann} (\ker \gamma_k)
\cong C_k^\star \cong C_k$. Therefore there is an endomorphism of $\F \Omega{k}$ having
$\ker \gamma_k$ in its kernel, and restricting to an isomorphism $C_k \cong \im \gamma_k^\star$.
The uniqueness of the decomposition now shows that $\F \Omega{k} = \ker \gamma_k \oplus \im \gamma_k^\star$.
However, by Lemma~\ref{lemma:phiphistar}, $\gamma_k \gamma_k^\star \not=0$
and $\gamma_k \gamma_k^\star + \gamma_{k+1}^\star \gamma_{k+1} = n\hskip0.5pt \mathrm{id}$, hence
$\gamma_k \gamma_k^\star \gamma_k = n \gamma_k$.
Therefore $\ker \gamma_k \cap \im \gamma_k^\star
\not= \{0\}$ whenever $n$ is even, showing that~\eqref{eq:seq} is not split in this case.

This argument can be adapted to show that, when $t=2$,~\eqref{eq:seq} is split if and only if
either (A) or (B) holds. Considerable calculation is required: for example, using only the
$\gamma$ and $\epsilon$ maps and their duals,
the simplest obstruction
to exactness  when $n \equiv 1$ mod~$4$ and $k$ is odd 
known to the author is
$\gamma_k^\star \epsilon_k \epsilon_k^\star \not= 0$ and 
$\gamma_k^\star \epsilon_k \epsilon_k^\star \epsilon_k = 0$.
 On the other hand, Example~\ref{ex:splitSurprise} shows that, when $t=4$, \eqref{eq:seq} may be split
in cases when neither (A) nor (B) holds. The following problem therefore appears to be quite deep.

\begin{problem}
Find a necessary and sufficient condition for~\eqref{eq:seq}
to be split exact.
\end{problem}

\subsubsection*{Generators for homology modules}

Recall that $G_\ell = \langle (1,2), \ldots, (2\ell-1, 2\ell) \rangle$.
Generalizing the elements $v_k$ defined before Theorem~\ref{thm:epsilonHomology}, we define
$\vk{t}{k} = \{2,4,\ldots, 2k\} \sum_{\sigma \in G_{k-t+1}} \sigma$.
By \cite[Theorem 17.13(i)]{James}, or a direct calculation similar to Lemma~\ref{lemma:vkdelta},
\smash{$\vk{t}{k}$} generates a submodule of \smash{$\ker \phik{t}{k}$}. 

\begin{conjecture}\label{conj:generation}
If $t$ is a two-power and $k \le 2n$ then the homology module
$\ker \phik{t}{k}/ \im \phik{t}{k+t}$ is generated by $\vk{t}{k} + \im \phik{t}{k+t}$.
\end{conjecture}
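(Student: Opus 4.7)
The plan is to mirror the three-part structure of the proof of Theorem~\ref{thm:epsilonHomology}: (i) a suspension reduction that handles cycles with small support; (ii) an explicit description of the homology in terms of elements supported near the last $t$ coordinates, via the Splitting Rule; (iii) induction on $n$.

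For (i), apply Lemma~\ref{lemma:suspension} with $\ell = t - 1$. Because $t = 2^\tau$, the sum $\ell + t = 2^{\tau+1} - 1$ is carry free, and for $0 < s \le \ell$ the low-order bits of $t - s$ overlap with those of $\ell$, so $\ell + (t-s)$ is not carry free. The lemma then yields: if $v \in \ker \phik{t}{k}$ and $X \in \Omega{2t-1}$ is disjoint from the support of $v$, then $v = \bigl(v(X\phik{t-1}{2t-1})\bigr)\phik{t}{k+t}$. Hence every cycle $v \in \ker \phik{t}{k}$ with support of size at most $n - (2t-1)$ lies in $\im \phik{t}{k+t}$, generalising Lemma~\ref{lemma:epsilonSuspension}.

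For (ii), generalising Lemma~\ref{lemma:epsilonGenerators}, set $T = \{n-t+1, \ldots, n\}$ and, for each $X \in \Omega{k}$ disjoint from $T$, apply the Splitting Rule to $(TX)\phik{t}{k+t}$ to obtain
\[
(TX)\phik{t}{k+t} \;=\; X \;+\; T(X\phik{t}{k}) \;+\; \sum_{s=1}^{t-1} (T\phik{s}{t})(X\phik{t-s}{k}).
\]
Since the left-hand side is a boundary, this rewrites $X$ modulo $\im \phik{t}{k+t}$. Summing over a basis and imposing membership of $\ker \phik{t}{k}$, the quotient $\ker\phik{t}{k}/\im\phik{t}{k+t}$ should be generated by $\F S_n$-orbits of sums $\sum_{A \subsetneq T} A\cdot u_A$ with $u_A \in \F\Omegan{k-|A|}{n-t}$, subject to compatibility equations that come from applying the Splitting Rule again to $\phik{t}{k}$. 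Because $t = 2^\tau$, Lemma~\ref{lemma:composition} forces many of the cross-compositions $\phik{s}{\bullet}\phik{r}{\bullet}$ to vanish, and these compatibility equations should collapse to the single requirement that each $u_A$ is a cycle in the $\F S_{n-t}$-subcomplex at degree $k - |A|$.

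For (iii), induct on $n$ assuming the conjecture at $n - t$: each $u_A$ is then generated modulo the smaller image by $\vk{t}{k-|A|}$, and reassembling the pieces $A \cdot u_A$ after relabelling the elements of $T$ as the new pairs $2(k-t+2)-1,\, 2(k-t+2), \ldots, 2k-1,\, 2k$ in the expression for $\vk{t}{k}$ should yield $\vk{t}{k}$ plus error terms supported on $\{1, \ldots, k+t-1\}\cup T$, of size at most $k + 2t - 1$. Once $n \ge k + 4t - 2$, these errors lie in $\im \phik{t}{k+t}$ by step~(i); the finite list of small-$n$ base cases can be verified by direct machine computation, as in Proposition~\ref{prop:Hgen}. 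The step I expect to be the main obstacle is (ii): for $t = 2$ there are only three cross-terms indexed by $\{n-1\}, \{n\}, \{n-1,n\}$, and the compatibility equations reduce to $u\epsilon = v\epsilon = 0$ after noting that $w \in \im \gamma$, but for general $t = 2^\tau$ there are $2^t - 1$ proper subsets $A \subsetneq T$, and simultaneously exploiting all the carry-free vanishings from Lemma~\ref{lemma:composition} to bring the resulting system into a manageable form is the central combinatorial challenge.
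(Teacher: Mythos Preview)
The paper does not prove this statement: it is Conjecture~\ref{conj:generation} in \S\ref{sec:problems}, accompanied only by the remarks that it is trivial for $t=1$, follows from Theorem~\ref{thm:epsilonHomology} for $t=2$, and has been machine-checked for $n \le 16$. There is no proof in the paper against which to compare your proposal.

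Your step~(i) is correct and is the right generalisation of Lemma~\ref{lemma:epsilonSuspension}: since $t-1 = 2^\tau - 1$ has every bit below position~$\tau$ set, the addition $(t-1)+t$ is carry free while $(t-1)+(t-s)$ is not for $0<s<t$, so the Suspension Lemma with $\ell = t-1$ applies and any cycle whose support misses some $(2t-1)$-set is a boundary.

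Your step~(ii), however, anticipates the wrong outcome, and this propagates into step~(iii). You predict the compatibility equations will ``collapse to the single requirement that each $u_A$ is a cycle'', but already for $t=2$ this is false. In Lemma~\ref{lemma:epsilonGenerators} the generators take the \emph{coupled} form $\{n\}v + \{n-1,n\}(v\gamma_{k-1})$: only $v$ is a freely chosen cycle, the $\{n-1,n\}$-component is determined by $v$, and the $\{n-1\}$-component has been eliminated using the $(n-1,n)$-symmetry. The reduction does not decouple the $u_A$ into independent cycles. If it did, your step~(iii) would invoke the inductive hypothesis once for each nonempty proper $A \subsetneq T$ and produce $2^t - 1$ separate generators $\vk{t}{k-|A|}$ rather than the single element $\vk{t}{k}$; the ``reassembling after relabelling'' you describe relies on there being a \emph{single} free cycle to relabel, exactly as in the passage from $v_{k-1}$ to $v_k$ in Proposition~\ref{prop:Hgen}. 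Identifying the correct coupled form of the generators --- so that one cycle over $\F S_{n-t}$ controls all the $u_A$ simultaneously --- is precisely the missing content, and is presumably why the statement remains a conjecture.
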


When $t=1$ the conjecture holds trivially because all the homology modules are zero.
 When $t=2$ it
is implied by Theorem~\ref{thm:epsilonHomology}.
It has been checked for all $n \le 16$ using {\sc Magma}
and the code available from the author's webpage.

\subsubsection*{Restricted homology}
Fix $s \in \N$. If $u \in \ker \phik{s}{k}$ then, by Lemma~\ref{lemma:composition},
$u \phik{t}{k} \phik{s}{k-t} 
= u \phik{s}{k} \phik{t}{k-s} = 0$.
Therefore $\phik{t}{k} : \F\Omega{k} \rightarrow \F\Omega{k-t}$ restricts
to a map $\ker \phik{s}{k} \rightarrow \ker \phik{s}{k-t}$ and we may ask for the homology
of the sequence
\begin{equation}\label{eq:rseq} \ker \phik{s}{k+t}  
\xrightarrow{\phik{t}{k+t}} \ker \phik{s}{k} \xrightarrow{\phik{t}{k}} \ker \phik{s}{k-t}\hskip0.5pt. \end{equation}
The following conjectures suggest that these restricted homology modules, denoted $\HR_k$, are surprisingly
well behaved. They have been checked for all $n \le 12$ using {\sc Magma} and the code available
from the author's webpage. 

\begin{conjecture}\label{conj:epsgammeEven}
Let $n = 2m$.
\begin{thmlist}
\item 
The sequence $\ker \gamma_{k+2} \xrightarrow{\epsilon_{k+2}} \ker \gamma_{k} \xrightarrow{\epsilon_k}
\ker \gamma_{k-2}$ has non-zero homology if and only if $k \in \{m-1,m\}$. Moreover $\HR_{m-1} \cong \HR_m 
\cong D^{(m+1,m-1)}$.
\item The sequence $\ker \epsilon_{k+1} \xrightarrow{\gamma_{k+1}} \ker \epsilon_{k} \xrightarrow{\gamma_k}
\ker \epsilon_{k-1}$ has non-zero homology if and only if $k = m$. Moreover $\HR_m \cong D^{(m+1,m-1)}$.
\end{thmlist}
\end{conjecture}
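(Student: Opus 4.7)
My plan for both parts rests on the commutativity $\gamma_k\epsilon_{k-1}=\epsilon_k\gamma_{k-2}=\phik{3}{k}$, which holds by Lemma~\ref{lemma:composition} because $1+2=3$ is carry-free in binary. Hence $\epsilon$ restricts to a differential on the sub-chain-complex $\ker\gamma_\bullet$, and $\gamma$ restricts to $\ker\epsilon_\bullet$. In each part the restricted homology will be pinned down by combining a long exact sequence with a composition-factor bound, and the two parts couple: I would prove part~(ii) first and feed its vanishings back to kill the extra terms in part~(i). Throughout I write $\HR_k^\gamma$ and $\HR_k^\epsilon$ for the restricted homology modules of parts~(i) and~(ii) respectively.

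For part~(ii), since the $\epsilon$-complex is not exact I would use the coupled pair of chain complexes with $\gamma$-differential,
\[ 0\to\ker\epsilon_\bullet\to\F\Omega{\bullet}\to\im\epsilon_\bullet\to 0,\qquad 0\to\im\epsilon_\bullet\to\ker\epsilon_{\bullet-2}\to H^\epsilon_{\bullet-2}\to 0. \]
Exactness of $(\F\Omega{\bullet},\gamma)$ from \S\ref{sec:background} combined with the first LES gives $H_k(\im\epsilon_\bullet,\gamma)\cong\HR_{k-1}^\epsilon$; substituting into the second LES, using Theorem~\ref{thm:epsilonHomology} to place $H^\epsilon_\bullet$ at degree $m$ with value $E^{(m+1,m-1)}$, and applying constancy (seeded at $\HR_j^\epsilon=0$ for $j\notin\{0,\ldots,n\}$) collapses the result to the five-term exact sequence
\[ 0\to\HR_{m+1}^\epsilon\to\HR_m^\epsilon\xrightarrow{f} E^{(m+1,m-1)}\xrightarrow{g}\HR_m^\epsilon\to 0, \]
the repeated middle term arising from the shift between $\im\epsilon_\bullet$ and $\ker\epsilon_{\bullet-2}$. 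A composition-factor recursion via the same two SES, using the multiplicities from Proposition~\ref{prop:cfs} and Lemma~\ref{lemma:twoRowSimples}(iii), gives $b_k=[\ker\epsilon_k:D^{(m+1,m-1)}]$ equal to $2$ at $k=m$, $1$ at $k=m-1$, and $0$ otherwise. Since $\HR_m^\epsilon\cong E^{(m+1,m-1)}/\im f$ via $g$, its composition factors are all $D^{(m+1,m-1)}$; so are those of $\HR_{m+1}^\epsilon\hookrightarrow\HR_m^\epsilon$, which combined with $[\HR_{m+1}^\epsilon:D^{(m+1,m-1)}]\le b_{m+1}=0$ forces $\HR_{m+1}^\epsilon=0$. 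The sequence reduces to an injection $\HR_m^\epsilon\hookrightarrow E^{(m+1,m-1)}$ with $E^{(m+1,m-1)}/\HR_m^\epsilon\cong\HR_m^\epsilon$; the only submodule $S\subseteq E^{(m+1,m-1)}$ with $E^{(m+1,m-1)}/S\cong S$ is the socle, so $\HR_m^\epsilon\cong D^{(m+1,m-1)}$.

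For part~(i), I would use the SES of chain complexes (with $\epsilon$-differential)
\[ 0\longrightarrow\ker\gamma_\bullet\longrightarrow\F\Omega{\bullet}\xrightarrow{\gamma}\ker\gamma_{\bullet-1}\longrightarrow 0, \]
where the cokernel identification uses exactness of the $\gamma$-complex. The associated LES $\cdots\to\HR_k^\gamma\to H_k\to\HR_{k-1}^\gamma\to\HR_{k-2}^\gamma\to H_{k-2}\to\cdots$, plus Theorem~\ref{thm:epsilonHomology} and constancy, collapses to the six-term exact sequence
\[ 0\to\HR_{m+1}^\gamma\to\HR_m^\gamma\to E^{(m+1,m-1)}\to\HR_{m-1}^\gamma\to\HR_{m-2}^\gamma\to 0. \]
The extra vanishings $\HR_{m+1}^\gamma=\HR_{m-2}^\gamma=0$ come from part~(ii): given $v\in\ker\gamma_{m+1}\cap\ker\epsilon_{m+1}$, Theorem~\ref{thm:epsilonHomology} lets us write $v=u\epsilon_{m+3}$, and the commutativity identity shows $u\gamma_{m+3}\in\ker\gamma_{m+2}\cap\ker\epsilon_{m+2}$; the part-(ii) vanishing $\HR_{m+2}^\epsilon=0$ (which follows from its constancy step alone) rewrites as $\ker\gamma_{m+2}\cap\ker\epsilon_{m+2}=\gamma_{m+3}(\ker\epsilon_{m+3})$, so $u$ can be adjusted by an element of $\ker\epsilon_{m+3}$ to land in $\ker\gamma_{m+3}$, giving $v\in\epsilon_{m+3}(\ker\gamma_{m+3})$ and so $\HR_{m+1}^\gamma=0$; the dual argument using $\HR_{m-1}^\epsilon=0$ gives $\HR_{m-2}^\gamma=0$. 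The residual four-term $0\to\HR_m^\gamma\to E^{(m+1,m-1)}\to\HR_{m-1}^\gamma\to 0$ identifies $\HR_m^\gamma$ with a submodule of $E^{(m+1,m-1)}$; the composition-factor bounds $a_m=a_{m-1}=1$ for $a_k=[\ker\gamma_k:D^{(m+1,m-1)}]$ exclude both $\HR_m^\gamma=0$ (which would force $[\HR_{m-1}^\gamma:D^{(m+1,m-1)}]=2$) and $\HR_m^\gamma=E^{(m+1,m-1)}$ (which would force $[\HR_m^\gamma:D^{(m+1,m-1)}]=2$), leaving $\HR_m^\gamma\cong\HR_{m-1}^\gamma\cong D^{(m+1,m-1)}$.

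The step I expect to be most delicate is the precise verification of the identifications and degree shifts underlying the five-term exact sequence in part~(ii), particularly the claim that the two occurrences of $\HR_m^\epsilon$ are genuinely identified as the same module rather than just abstractly isomorphic. The composition-factor recursions, constancy bookkeeping, and element chases through the commutativity identity should then be routine but careful verifications.
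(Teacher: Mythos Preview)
This statement is a \emph{conjecture} in the paper, not a theorem: the paper offers no proof, only the remark that both parts have been verified computationally for $n\le 12$. Your proposal is therefore not a comparison exercise but a genuine attempt on an open problem, and as far as I can see the outline is correct. The two short exact sequences of chain complexes you write down are valid (the commutativity $\gamma\epsilon=\epsilon\gamma$ makes every map a chain map), the long exact sequences collapse exactly as you describe, and the composition-factor bookkeeping via Lemma~\ref{lemma:twoRowSimples}(iii) and Proposition~\ref{prop:cfs} pins down the remaining term in each case.

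A few simplifications are available. In part~(ii) your constancy argument already gives $\HR_j^\epsilon\cong\HR_{j-1}^\epsilon$ for all $j\neq m,m+1$; chaining upward from the trivial range yields $\HR_{m+1}^\epsilon=0$ directly, so the separate composition-factor argument for that vanishing is redundant. Likewise in part~(i), the same constancy (now $\HR_j^\gamma\cong\HR_{j-1}^\gamma$ for $j\neq m-1,m+1$) already kills $\HR_{m+1}^\gamma$ and $\HR_{m-2}^\gamma$, so part~(i) does not in fact need part~(ii). Your element-chase via $\HR_{m+2}^\epsilon=0$ is correct but unnecessary.

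Finally, the worry you flag about the two occurrences of $\HR_m^\epsilon$ in the five-term sequence being ``genuinely identified'' is not an obstacle. All you need is that they are abstractly isomorphic: from $0\to A\to E^{(m+1,m-1)}\to A'\to 0$ with $A\cong A'$ and $E^{(m+1,m-1)}$ of length~$2$, equality of lengths forces $A\cong A'\cong D^{(m+1,m-1)}$. So the delicate step you anticipate is in fact routine.
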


\begin{conjecture}\label{conj:epsgammeOdd}
Let $n = 2m+1$.
\begin{thmlist}
\item 
The sequence $\ker \gamma_{k+2} \xrightarrow{\epsilon_{k+2}} \ker \gamma_{k} \xrightarrow{\epsilon_k}
\ker \gamma_{k-2}$ has non-zero homology if and only if $k = m$. Moreover $\HR_m \cong D^{(m+1,m)}$.
\item The sequence $\ker \epsilon_{k+1} \xrightarrow{\gamma_{k+1}} \ker \epsilon_{k} \xrightarrow{\gamma_k}
\ker \epsilon_{k-1}$ is exact.
\end{thmlist}
\end{conjecture}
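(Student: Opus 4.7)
The plan is to deduce both parts of the conjecture from a new chain-homotopy-like identity that holds in characteristic two.

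First, I would establish the identity
\[ \gamma_{k+1}^\star\epsilon_{k+1} + \epsilon_k\gamma_{k-1}^\star = (n-1)\gamma_k \]
of $\F S_n$-module homomorphisms $\F\Omega{k} \to \F\Omega{k-1}$. The proof is a direct computation in the spirit of Lemma~\ref{lemma:phiphistar}: evaluating each composition on $Y \in \Omega{k}$ and sorting the resulting $(k-1)$-subsets by the size of their symmetric difference with $Y$, the only non-trivial contributions come from the terms of symmetric difference $1$ (contributing $n-k$ and $k-1$ respectively to the coefficient of $Y\gamma_k$) and symmetric difference $3$ (whose coefficients match and cancel modulo~$2$). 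When $n = 2m+1$ is odd, this becomes $\gamma_{k+1}^\star\epsilon_{k+1} = \epsilon_k\gamma_{k-1}^\star$.

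Part (ii) is then immediate. For $v \in \ker\gamma_k \cap \ker\epsilon_k$, set $\tilde v := v\gamma_{k+1}^\star$. The chain homotopy $\gamma_k\gamma_k^\star + \gamma_{k+1}^\star\gamma_{k+1} = n\cdot\mathrm{id}$ of Lemma~\ref{lemma:phiphistar}, combined with $v\gamma_k = 0$, gives $\tilde v\gamma_{k+1} = nv = v$. The new identity combined with $v\epsilon_k = 0$ gives $\tilde v\epsilon_{k+1} = v\epsilon_k\gamma_{k-1}^\star = 0$. So $\tilde v \in \ker\epsilon_{k+1}$ lifts $v$ under $\gamma_{k+1}$, yielding $\ker\gamma_k \cap \ker\epsilon_k \subseteq \gamma_{k+1}(\ker\epsilon_{k+1})$; the reverse containment follows from $\gamma\gamma = 0$ and the commutation $\gamma\epsilon = \epsilon\gamma$ of Lemma~\ref{lemma:composition}.

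For part (i), I would use (ii) to establish the equality $\epsilon_{k+2}(\ker\gamma_{k+2}) = \ker\gamma_k \cap \im\epsilon_{k+2}$: given $v = u\epsilon_{k+2} \in \ker\gamma_k$, the element $u\gamma_{k+2}$ lies in $\ker\gamma_{k+1}\cap\ker\epsilon_{k+1}$, so (ii) yields $w \in \ker\epsilon_{k+2}$ with $w\gamma_{k+2} = u\gamma_{k+2}$; then $u - w \in \ker\gamma_{k+2}$ maps to $v$ under $\epsilon_{k+2}$. This gives an injection $\HR_k \hookrightarrow H_k$, and Theorem~\ref{thm:epsilonHomology} then forces $\HR_k = 0$ whenever $k \notin \{m, m+1\}$.

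The remaining obstacle is to show $\HR_m \cong D^{(m+1,m)}$ and $\HR_{m+1} = 0$, despite $H_m \cong H_{m+1} \cong D^{(m+1,m)}$. For $k = m$, I plan to modify the generator $v_m$ of $H_m$ by an element of $\im\epsilon_{m+2}$ to land in $\ker\gamma_m$: this amounts to exhibiting a preimage of $v_m\gamma_m = \{2,4,\ldots,2(m-1)\}\sum_{\sigma\in G_{m-1}}\sigma$ under $\phik{3}{m+2} = \epsilon_{m+2}\gamma_m$, which I would do by explicit construction (for $n=5$ the element $\{2,3,4,5\}+\{1,3,4,5\}$ suffices). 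Since $H_m$ is simple, the resulting non-zero class in $\HR_m$ forces $\HR_m = H_m$. For $k = m+1$, the conjecture $\HR_{m+1} = 0$ is equivalent to $\gamma_{m+2}(\ker\epsilon_{m+2}) \subseteq \im\epsilon_{m+3}$; this is the most delicate step, and I would approach it by induction on $n$ via restriction to $S_{n-1} = S_{2m}$ and the even-$n$ case (Conjecture~\ref{conj:epsgammeEven}), mirroring the even/odd alternation used in the proof of Theorem~\ref{thm:epsilonHomology}.
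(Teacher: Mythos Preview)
The paper does not prove this statement; it is stated as a conjecture with only computational verification for $n \le 12$. Your proposal is therefore an attempt at an open problem, and most of it works. The identity $\gamma_{k+1}^\star\epsilon_{k+1} + \epsilon_k\gamma_{k-1}^\star = (n-1)\gamma_k$ is correct, and with $n$ odd it combines with the one-step homotopy $\gamma_k\gamma_k^\star + \gamma_{k+1}^\star\gamma_{k+1} = n\,\mathrm{id}$ to give a complete proof of~(ii). The equality $\epsilon_{k+2}(\ker\gamma_{k+2}) = \ker\gamma_k \cap \im\epsilon_{k+2}$ then follows from~(ii) as you say, yielding $\HR_k \hookrightarrow H_k$ and hence $\HR_k = 0$ for $k \notin \{m,m+1\}$ by Theorem~\ref{thm:epsilonHomology}. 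Your construction for $\HR_m \neq 0$ also works in general: with $X = \{2m-1,2m,2m+1\}$ disjoint from the support of $v_m\gamma_m$, the Splitting Rule gives $\bigl((v_m\gamma_m)X\bigr)\phik{3}{m+2} = v_m\gamma_m$, since $v_m\gamma_m \in \ker\gamma_{m-1} \cap \ker\epsilon_{m-1} \cap \ker\phik{3}{m-1}$.

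The genuine gap is your final step: to show $\HR_{m+1} = 0$ you invoke Conjecture~\ref{conj:epsgammeEven}, which is equally unproved in the paper, so the argument is at best conditional. This is easily repaired by an Euler-characteristic count. Using $\dim\ker\gamma_k = \binom{n-1}{k} = \binom{2m}{k}$ from~\eqref{eq:hookIso}, the alternating sums of dimensions over the two parity chains are the real and imaginary parts of $(1+\i)^{2m} = 2^m\i^m$: one equals $\pm 2^m$ and the other is~$0$, and checking the four residues of $m$ modulo~$4$ shows the nonzero one always lies on the chain containing $k=m$. Since you have already shown that $\HR_k = 0$ for $k \notin \{m, m+1\}$ and that $\HR_m$, $\HR_{m+1}$ each inject into the simple module $D^{(m+1,m)}$ of dimension~$2^m$, this forces $\HR_m \cong D^{(m+1,m)}$ and $\HR_{m+1} = 0$ with no appeal to the even-$n$ case. (This also makes the explicit modification of $v_m$ unnecessary, though it is a pleasant construction.)
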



For example, taking $n=6$ as in 
Example~\ref{ex:ex},
the chain complex with restricted maps
\smash{$0 \rightarrow \ker \gamma_{6} \xrightarrow{\epsilon_6} \ker \gamma_{4} \xrightarrow{\epsilon_4} \ker \gamma_{2}
\xrightarrow{\epsilon_2} \ker \gamma_{0} \rightarrow 0$} is
\[ 0 \rightarrow 0 \xrightarrow{\epsilon_6} \begin{matrix} \F \\ D^{(5,1)} \end{matrix}
\xrightarrow{\epsilon_4} \begin{matrix} \F \\ D^{(4,2)} \\[-3pt] \F \\ D^{(5,1)} \end{matrix}
\xrightarrow{\epsilon_2} \F \rightarrow 0\]
which has non-zero homology of $D^{(4,2)}$ uniquely in degree $2$. This chain complex is dual
to the chain complex 
\smash{$0 \rightarrow \ker \gamma_{5} \xrightarrow{\epsilon_5} \ker \gamma_{3} \xrightarrow{\epsilon_3} \ker \gamma_{1}
\xrightarrow{\epsilon_1} 0$} which has non-zero homology of $D^{(4,2)}$ uniquely in degree $3$.
The chain complex
$0\rightarrow \ker \epsilon_6 \xrightarrow{\gamma_6} \ker \epsilon_5 \xrightarrow{\gamma_5} \cdots 
\xrightarrow{\gamma_2}\ker \epsilon_1 \xrightarrow{\gamma_1} \ker \epsilon_0 \rightarrow 0$ is

\hspace*{-0.1in}\begin{tikzpicture}[x=0.5cm, y=0.5cm]
\node[right] (O) at (0,0) {
 $0 \rightarrow 0 \xrightarrow{\,\gamma_6\,} 0 \xrightarrow{\,\gamma_5\,} \F \xrightarrow{\,\gamma_4\,} \begin{matrix}
D^{(4,2)} \\ \F \\ D^{(5,1)} \oplus\; D^{(4,2)} \\ \F \end{matrix} \xrightarrow{\,\gamma_3\,}\; \begin{matrix} D^{(5,1)} \\
\F \\ D^{(4,2)} \\ \F \\ D^{(5,1)} \end{matrix}\; \xrightarrow{\,\gamma_2\,} \begin{matrix} \F \\ D^{(5,1)} \\ \F 
\end{matrix}\; \xrightarrow{\,\gamma_1\,}\; \F \rightarrow 0$};  

\draw (11.1,0.2)--(13.2,0.2)--(13.2,-1)--(11.5,-1)--(11.5,-2.25)--(10.1,-2.25)--(10.1,-1)--(11.1,-1)--(11.1,0.2);
\draw(14.8,-2.5)--(17.2,-2.5)--(17.2,0.6)--(14.8,0.6)--(14.8,-2.5);
\draw(18.8,-1.5)--(21.0,-1.5)--(21.0,0.6)--(18.8,0.6)--(18.8,-1.5);
\draw(22.7,-0.5)--(23.7,-0.5)--(23.7,0.6)--(22.7,0.6)--(22.7,-0.5);
\end{tikzpicture}

\noindent 
where the boxes show the kernels of the maps $\gamma_k$, now each
restricted to $\ker \epsilon_k$. It has non-zero homology of $D^{(4,2)}$ uniquely in degree $3$.

\subsubsection*{Multistep maps in odd characteristic}
Now suppose that $\F$ has odd prime characteristic $p$.
Lemma~\ref{lemma:composition} generalizes
to show that $\phik{s}{k+s} \phik{t}{k} = 0$ whenever~$p$ divides $\binom{s+t}{s}$.
(Equivalently, a carry arises when $s$ and $t$ are added in base~$p$.)
Generalizing the usual definition, we may ask for the homology 
$H_k = \ker \phik{t}{k} / \im \phik{s}{k+s}$ of the sequence
\begin{equation}
\label{eq:genseq}
\F\Omega{k+s} \xrightarrow{\phik{s}{k+s}} \F\Omega{k} \xrightarrow{\phik{t}{k}} \F\Omega{k-t}.\end{equation}
The following two conjectures have been checked for all $n \le 12$ using {\sc Magma}
and the code available from the author's webpage.

\begin{conjecture}\label{conj:odd3}
If $p=3$ then 
$\F\Omega{k+2} \xrightarrow{\epsilon_{k+2}} \F\Omega{k} \xrightarrow{\gamma_{k}}
\F\Omega{k-1}$
has non-zero homology if and only if $k = \lfloor n/2 \rfloor$. Moreover
in the exceptional case $H_k$ is isomorphic to the sign module.
\end{conjecture}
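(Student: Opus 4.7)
The plan is to adapt the strategy of Theorem~\ref{thm:epsilonHomology} to characteristic~$3$. First, by the evident characteristic-$p$ version of Lemma~\ref{lemma:composition}, $\epsilon_{k+2}\gamma_k=\binom{3}{2}\phik{3}{k+2}=0$ in characteristic~$3$, so the three-term sequence is a chain complex.  A duality argument identical to Proposition~\ref{prop:dual} (applied to the dualized-and-complemented sequence, using the canonical identifications $\F\Omega{r}^\star\cong\F\Omega{r}\cong\F\Omega{n-r}$) shows that the middle homology of our complex is isomorphic to the dual of the middle homology of the reversed sequence
\[ \F\Omega{n-k+1}\xrightarrow{\gamma_{n-k+1}}\F\Omega{n-k}\xrightarrow{\epsilon_{n-k}}\F\Omega{n-k-2}, \]
which is itself a chain complex since $\binom{3}{1}\equiv 0\pmod 3$.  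The two complexes have different shape, which is consistent with only one of the two positions $\lfloor n/2\rfloor$ and $\lceil n/2\rceil$ being exceptional for each.

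For the vanishing of $H_k$ when $k\ne\lfloor n/2\rfloor$, I plan a suspension-style argument analogous to Lemma~\ref{lemma:epsilonSuspension} and Proposition~\ref{prop:Hgen}.  Given $v\in\ker\gamma_k$ whose support avoids some $3$-subset $\{a,b,c\}$, the Splitting Rule applied to $w=(\{a,b\}+\{a,c\}+\{b,c\})v$ yields
\[ w\epsilon_{k+2}=(\{a,b,c\}\gamma_3)(v\epsilon_k)+3v=(\{a,b,c\}\gamma_3)(v\epsilon_k) \]
in characteristic~$3$, so $v$ is recovered only up to the correction term $(\{a,b,c\}\gamma_3)(v\epsilon_k)$.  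Adding a further candidate preimage of the form $\{a,b,c\}v'$, whose boundary by the Splitting Rule equals $\{a,b,c\}(v'\epsilon_{k-1})+(\{a,b,c\}\gamma_3)(v'\gamma_{k-1})+(\{a\}+\{b\}+\{c\})v'$, and choosing $v'\in\F\Omegan{k-1}{n-3}$ so that its $N_2$-component cancels $(\{a,b,c\}\gamma_3)(v\epsilon_k)$, reduces the problem to a smaller ambient set.  Induction on the size of the support of~$v$, as in the inductive step of Proposition~\ref{prop:Hgen}, then gives $v\in\im\epsilon_{k+2}$ whenever $|\supp v|\le n-3$.

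To identify the exceptional homology at $k=\lfloor n/2\rfloor$, the worked cases $n=3$ (generator $\{1\}-\{2\}+\im\epsilon_3$) and $n=4$ (generator $\{1,2\}+\{3,4\}-\{1,4\}-\{2,3\}+\im\epsilon_4$, yielding a $1$-dimensional quotient on which transpositions act by $-1$) both coincide with the polytabloid of the standard $(n-m,m)$-tableau.  Accordingly I would take
\[ u_m=\{2,4,\ldots,2m\}\sum_{\sigma\in G_m}\sgn(\sigma)\sigma,\qquad G_m=\langle(1,2),(3,4),\ldots,(2m-1,2m)\rangle, \]
so that $u_m$ is the polytabloid generating the two-row Specht module $S^{(n-m,m)}$ inside $\F\Omega{m}$.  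A direct calculation patterned on Lemma~\ref{lemma:vkdelta} should give $u_m\gamma_m=0$, and each Coxeter generator $(2i-1,2i)$ of $G_m$ acts on $u_m$ by $-1$, so the cyclic submodule of $H_{\lfloor n/2\rfloor}$ generated by $u_m+\im\epsilon_{m+2}$ is a non-zero quotient of $S^{(n-m,m)}$ having $\sgn$ as a composition factor.

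The hard part will be pinning down $H_{\lfloor n/2\rfloor}$ as \emph{exactly} the one-dimensional sign module.  Unlike in characteristic~$2$, the one-step maps do not form a chain complex in characteristic~$3$ (since $\binom{2}{1}\not\equiv 0\pmod 3$), so $\ker\gamma_k$ is no longer canonically a hook Specht module, and the two-row composition-factor bookkeeping of Lemma~\ref{lemma:twoRowSimples} and Proposition~\ref{prop:cfs} must be redone from scratch.  I would combine: (i)~the explicit lower bound $\dim H_{\lfloor n/2\rfloor}\ge 1$ from the generator~$u_m$; (ii)~an inductive restriction argument to $S_{n-1}$ mirroring the even-to-odd / odd-to-even steps of \S\ref{sec:epsilonHomology}, using that the sign restricts to sign; and (iii)~Nakayama's Conjecture to rule out composition factors outside the $3$-block of $\sgn$.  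A careful analysis of the interaction between $\im\epsilon_{m+2}$ and the radical of $S^{(n-m,m)}$ in characteristic~$3$ is where the deepest new input will be required.
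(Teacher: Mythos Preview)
The statement you are attempting to prove is Conjecture~\ref{conj:odd3}: in the paper it is an \emph{open conjecture}, verified only by computer for $n\le 12$, and there is no proof to compare your attempt against. The paper does observe (via the analogue of Proposition~\ref{prop:cfs} and James' regularization theorem) that $\sgn$ must occur as a composition factor of $H_{\lfloor n/2\rfloor}$, and it records the binomial identity~\eqref{eq:p3} that a proof would categorify, but it does not claim to establish either the vanishing of $H_k$ for $k\ne\lfloor n/2\rfloor$ or the one-dimensionality of the exceptional homology.

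Your sketch contains a concrete error in the suspension step. With $w=(\{a,b,c\}\gamma_3)v$ and $v\in\ker\gamma_k$, the Splitting Rule gives
\[
w\epsilon_{k+2}
=(\{a,b,c\}\gamma_3)(v\epsilon_k)+2(\{a\}+\{b\}+\{c\})(v\gamma_k)+3v
=(\{a,b,c\}\gamma_3)(v\epsilon_k),
\]
so $v$ is not ``recovered up to a correction term'': it is not recovered at all. The term $3v$ that you need vanishes precisely because you are in characteristic~$3$. This is the essential obstruction to transporting Lemma~\ref{lemma:epsilonSuspension} directly; the Suspension Lemma (Lemma~\ref{lemma:suspension}) requires the addition of $\ell$ to $t$ to be carry free in base~$p$, and here $\ell+t=1+2=3$ is not carry free in base~$3$. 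A correct suspension would instead start from $w=Xv$ with $X$ a $2$-set disjoint from $\supp v$, giving $(Xv)\epsilon_{k+2}=v+X(v\epsilon_k)$; but the correction $X(v\epsilon_k)$ is \emph{not} in $\ker\gamma_k$, so your proposed induction on the support size does not run. The subsequent ``choose $v'$ so that its $N_2$-component cancels\ldots'' paragraph is too vague to repair this: you would need a systematic way to kill correction terms that leave $\ker\gamma_k$, and nothing in the characteristic-$2$ toolkit supplies one.

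Your identification of the generator $u_m$ and the verification $u_m\gamma_m=0$ are correct, and the outline in your final paragraph is a reasonable strategy for bounding the exceptional homology from above; but as you yourself say, the restriction and block arguments would all need to be redone in characteristic~$3$, and you have not indicated how. In short: you have correctly located where the difficulty lies, but the vanishing argument as written does not work, and the conjecture remains open.
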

 
Taking $n=2m$, James' $p$-regularization theorem (see \cite{JamesDecII})
implies that  $\sgn \cong D^{(m,m)}$ when $\F$ has characteristic $3$.
The analogue of Proposition~\ref{prop:cfs} then
implies that $\sgn$ is a composition factor of $\F\Omega{m}$, 
but not of either $\F\Omega{m+1}$ or $\F\Omega{m-2}$.
Hence $H_m$ has the sign module as a composition factor. By the 
argument seen in the proof of Corollary~\ref{cor:dimensions}, a proof of Conjecture~\ref{conj:odd3}
will categorify the binomial identity
\begin{equation}
\label{eq:p3} \sum_{j} \binom{n}{3j} - \sum_{j} \binom{n}{3j+1} = \begin{cases} (-1)^n & \text{if $n \equiv 0$ mod $3$}
\\ 0 & \text{if $n \equiv 1$ mod $3$} \\ (-1)^{n-1} & \text{if $n \equiv 2$ mod $3$.} \end{cases} \end{equation}
(This identity follows at once from (6.14) and (6.22) in \cite{GouldVol6}, or by adapting
the proof of~\eqref{eq:binom} in~\S\ref{sec:epsilonHomology}, or most easily, by induction on $n$.)
For example, when $n=10$ the identity is categorified by the chain complex 
\[ 0 \rightarrow \F\Omega{10} \xrightarrow{\gamma_{10}} \F\Omega{9} \xrightarrow{\epsilon_{9}} \F\Omega{7} \xrightarrow{\gamma_{7}}  \F\Omega{6} \xrightarrow{\epsilon_{6}} 
\F\Omega{4} \xrightarrow{\gamma_{4}}  \F\Omega{3} \xrightarrow{\epsilon_{3}}  \F\Omega{1} 
\xrightarrow{\gamma_{1}}  \F\Omega{0} \rightarrow 0,\]
which is exact in every degree. 

\begin{conjecture}\label{conj:odd5}
If $p=5$ then $\F\Omega{k+4} \xrightarrow{\phik{4}{k+4}} \F\Omega{k} \xrightarrow{\gamma_{k}}
\F\Omega{k-1}$ has non-zero homology if and only if $k 
\in \{ \lfloor n/2 \rfloor,  \lfloor n/2 \rfloor -1\}$.
Moreover, if $n=2m$ is even then $H_{m-1} \cong D^{(m+1,m-1)}$ and $H_m \cong D^{(m,m)}$,
and if $n=2m+1$ is odd then $H_{m-1} \cong D^{(m+2,m-1)}$ and $H_m \cong D^{(m+1,m)}$.
\end{conjecture}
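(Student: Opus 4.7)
The plan is to adapt the framework of Theorem~\ref{thm:epsilonHomology} to characteristic~$5$. First observe that since $\binom{5}{1} \equiv \binom{5}{4} \equiv 0 \pmod 5$, the natural odd-characteristic form of Lemma~\ref{lemma:composition} gives both $\phik{4}{k+4}\gamma_k = 0$ and $\gamma_k\phik{4}{k-1} = 0$, so the three-term sequence of the conjecture extends to the longer chain complex
\[ \cdots \xrightarrow{\gamma_{k+5}} \F\Omega{k+4} \xrightarrow{\phik{4}{k+4}} \F\Omega{k} \xrightarrow{\gamma_k} \F\Omega{k-1} \xrightarrow{\phik{4}{k-1}} \F\Omega{k-5} \xrightarrow{\gamma_{k-5}} \cdots, \]
whose terms are the $\F\Omega{r}$ with $r \equiv k$ or $k-1 \pmod 5$, and which is preserved by the duality of Proposition~\ref{prop:dual}. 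I would induct on $n$, verifying small cases using {\sc Magma}, and treating the two critical degrees $\lfloor n/2 \rfloor$ and $\lfloor n/2 \rfloor - 1$ in parallel.

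To show the homology $H_k = \ker\gamma_k / \im \phik{4}{k+4}$ vanishes in non-critical degrees, a naive suspension argument (the obvious analogue of Lemma~\ref{lemma:epsilonSuspension}) fails: given $v \in \ker\gamma_k$ disjoint from an auxiliary $X \in \Omega{4+j}$, the Splitting Rule expands $(v \cdot X\phik{j}{4+j})\phik{4}{k+4}$ as $\sum_{s=0}^{4} \binom{4-s+j}{j}(v\phik{s}{k})(X\phik{4-s+j}{4+j})$, but no choice of $j$ simultaneously makes $\binom{4+j}{j}$ a unit mod~$5$, kills the $s=2,3,4$ coefficients, and handles the $s=4$ term (whose binomial is $\binom{j}{j}=1$). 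The realistic substitute is a composition-factor argument: using James' characteristic-$5$ classification of composition factors of $M^{(n-k,k)}\cong\F\Omega{k}$, one would show that the only two-row simple modules that could survive in $\ker\gamma_k$ but not lie in $\im\phik{4}{k+4}$ are the claimed $D^{(m,m)}$, $D^{(m+1,m-1)}$, $D^{(m+2,m-1)}$, $D^{(m+1,m)}$, and only at the critical degrees.

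For the critical degrees themselves, I would categorify the associated binomial identity (the characteristic-$5$ analogue of~\eqref{eq:evenBinom}) via the Euler characteristic of the chain complex, providing a dimension check for the sum of critical homology dimensions. Then, following the even-to-odd and odd-to-even inductive steps in \S\ref{sec:epsilonHomology}, restriction of $H_k$ to $S_{n-1}$ combined with the inductive hypothesis for $n-1$ would identify its composition factors as sums of restrictions of the candidate simple modules; Nakayama's Conjecture separates these candidates into different blocks, and cyclicity of $H_k$ — inherited from a small-support generator analogous to~$v_k$ in Proposition~\ref{prop:Hgen}, modified for the mixed pair of maps — would rule out non-trivial direct sum decompositions and force $H_k$ to equal the claimed simple module.

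The main obstacle is expected to be constructing and using the right small-support generator for $H_k$. In the characteristic-$2$ case the element $v_k$ built from the subgroup $G_{k-1}$ of transpositions did the job, but here the analogous construction must be tailored to the pair of maps $(\phik{4}{},\gamma)$, likely involving cosets of a subgroup generated by $5$-cycles (or a product of symmetric groups $S_5$) rather than transpositions, and it must be compatible with both critical degrees in lockstep. Proving that this candidate generates the entire homology — rather than just a submodule — is likely to require careful tracking of two distinct simple modules $D^{(m,m)}$ and $D^{(m+1,m-1)}$ in adjacent critical degrees when $n=2m$, in contrast to the single repeated $D^{(m+1,m-1)}$ of Theorem~\ref{thm:epsilonHomology}, and the branching rule in the inductive step must be delicate enough to detect each simple separately in the restriction to $S_{n-1}$.
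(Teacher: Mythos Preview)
This statement is a \emph{conjecture} in the paper, not a theorem: the paper gives no proof of it. The author only reports that it has been checked by computer for all $n \le 12$, notes that it is ``straightforward to show that the homology modules have the specified simple modules as composition factors'', and observes the connection to Fibonacci numbers and the Andrews identity~\eqref{eq:Andrews}. So there is no paper proof against which to compare your proposal; what you have written is a research plan for an open problem.

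That said, your plan correctly identifies the central obstruction. In the characteristic-$2$ proof of Theorem~\ref{thm:epsilonHomology}, the Suspension Lemma is what forces $H_k = 0$ in non-critical degrees, and you rightly observe that no analogous one-step suspension is available here for the mixed pair $(\phik{4}{},\gamma)$. Your proposed replacement --- a composition-factor argument using James' results on the $M^{(n-k,k)}$ --- is exactly what the paper uses to show the claimed simples \emph{occur} in the critical homology, but it does not by itself show vanishing elsewhere: knowing which simples are available in $\F\Omega{k}$ does not tell you that $\ker\gamma_k$ coincides with $\im\phik{4}{k+4}$. The Euler-characteristic constraint fixes the alternating sum of homology dimensions along the whole complex, but does not localise the non-vanishing to the two critical degrees. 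So the genuine gap in your plan is the same gap that makes this a conjecture: without a working suspension-type lemma or a small-support generator whose vanishing modulo $\im\phik{4}{k+4}$ can be verified directly, there is no mechanism to kill the homology in the non-critical degrees, and the inductive step cannot begin.
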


Again it is straightforward to show that the homology modules have the specified
simple modules as composition factors.
Somewhat remarkably, the dimensions of these simple modules appear to be
certain Fibonacci numbers, as defined
by $F_0 = 0$, $F_1 = 1$ and $F_n = F_{n-1} + F_{n-2}$ for $n\ge 2$.
A proof of Conjecture~\ref{conj:odd5} will imply
that $\dim D^{(m,m)} = F_{2m-1}$ and $\dim D^{(m+1,m-1)}= \dim D^{(m+2,m-1)} = F_{2m}$,
and categorify a family of binomial identities
including 
\begin{equation}\label{eq:Andrews1}
\sum_{j} \binom{5m}{5j} - \sum_{j} \binom{5m}{5j+1} =  (-1)^{m}F_{5m-1} 
\end{equation} 
and $\sum_{j} \binom{5m+2}{5j} - \sum_{j} \binom{5m+2}{5j+1} = (-1)^{m-1}F_{5m+1} $.
These identities are somewhat deeper than~\eqref{eq:p3}. Taken together they are equivalent to the identity
\begin{equation}\label{eq:Andrews} F_n = \sum_k (-1)^k \binom{n}{\lfloor \frac{n-1 - 5k}{2} \rfloor} \end{equation}
proved by Andrews in \cite{Andrews} and later, with a simpler inductive proof, by Gupta in \cite{GuptaFibonacci}.
For example, since  
$\lfloor \frac{10r-2-5k}{2} \rfloor \equiv (-1)^{k-1}$ mod~$5$,
Andrews' identity implies that $F_{10r-1} = \sum_j \binom{10r-1}{5j-1} - \sum_j \binom{10r-1}{5j+1}$.
Since $\binom{5m}{5j} = \binom{5m-1}{5j} + \binom{5m-1}{5j-1}$ and $\binom{5m}{5j+1} = \binom{5m-1}{5j+1}
+\binom{5m-1}{5j}$,
this is equivalent to~\eqref{eq:Andrews1} when $m$ is even.


\def\cprime{$'$} \def\Dbar{\leavevmode\lower.6ex\hbox to 0pt{\hskip-.23ex
  \accent"16\hss}D} \def\cprime{$'$}
\providecommand{\bysame}{\leavevmode\hbox to3em{\hrulefill}\thinspace}
\providecommand{\MR}{\relax\ifhmode\unskip\space\fi MR }
\providecommand{\MRhref}[2]{%
  \href{http://www.ams.org/mathscinet-getitem?mr=#1}{#2}
}
\providecommand{\href}[2]{#2}
\renewcommand{\MR}[1]{\relax}

\end{document}